\title{\Large New results on the 1-isolation number of graphs without short cycles}
\author{ {Yirui Huang\,, Gang Zhang\,\footnotemark[1]\,\,, Xian'an Jin}\vspace{2mm}\\
	\small  School of Mathematical Sciences, Xiamen University,\\
	\small  Xiamen, Fujian 361005, P.R. China\\
}
\date{}
\newtheorem{theorem}{Theorem}[section]
\newtheorem{lemma}[theorem]{Lemma}
\newtheorem{problem}[theorem]{Problem}
\newcounter{claimcount}
\def\claimformat{\Alph{claimcount}}
\newenvironment{claim}{\refstepcounter{claimcount}\textbf{Claim \claimformat.}}{\par}
\newcommand{\vertex}{\node[vertex]}
\tikzstyle{vertex}=[circle, draw, inner sep=0pt, minimum size=6pt]
\newenvironment{theorembis}[1]
{%
	\addtocounter{theorem}{-1}%
	\begin{theorem}}
	{\end{theorem}}
\begin{document}
	
	\renewcommand{\thefootnote}{\fnsymbol{footnote}}
	\footnotetext[1]{Corresponding author.\\
		\hangindent=1.8em E-mail addresses: yrhuangis@163.com, gzh\_ang@163.com, xajin@xmu.edu.cn.}

	\maketitle {\small \noindent{\bfseries Abstract} Let $G$ be a graph. A subset $D \subseteq V(G)$ is called a 1-isolating set of $G$ if $\Delta(G-N[D]) \leq 1$, that is, $G-N[D]$ consists of isolated edges and isolated vertices only. The $1$-isolation number of $G$, denoted by $\iota_1(G)$, is the cardinality of a smallest $1$-isolating set of $G$. In this paper, we prove that if $G \notin \{P_3,C_3,C_7,C_{11}\}$ is a connected graph of order $n$ without $6$-cycles, or without induced 5- and 6-cycles, then $\iota_1(G) \leq \frac{n}{4}$. Both bounds are sharp.\\
{\bfseries Keywords}: 1-isolation number; Upper bounds; 6-cycles; Induced 5- and 6-cycles

\section {\large Introduction}

Every graph considered in this paper is finite, simple and undirected. We refer the readers to \cite{Bondy2008} for undefined notations and terminologies in graph theory, and refer to \cite{Zhang2021} for related notations and terminologies in this topic.

%\vspace{3mm}
Let $G$ be a graph and $k \geq 0$ be an integer. A subset $D \subseteq V(G)$ is a {\it $K_{1,k+1}$-isolating set} of a graph $G$ if $\Delta(G-N[D]) \leq k$, that is, $G-N[D]$ contains no $K_{1,k+1}$ as a subgraph. The {\it $K_{1,k+1}$-isolation number} of $G$, denoted by $\iota_k(G)$, is the cardinality of a smallest $K_{1,k+1}$-isolating set of $G$. Following Caro and Hansberg's definition in \cite{Caro2017}, the $K_{1,k+1}$-isolation is simply called the {\it $k$-isolation} in graphs.

In \cite{Caro2017}, Caro and Hansberg proved that

\begin{theorem}(Caro and Hansberg \cite{Caro2017}). \label{Theorem1.1} (i) For any graph $G$ of order $n$, $\iota_k(G) \leq \frac{n}{k+2}$. (ii) If $T$ is a tree of order $n$ that is different from $K_{1,k+1}$, then $\iota_k(T) \leq \frac{n}{k+3}$.
\end{theorem}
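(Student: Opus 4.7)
The plan is a greedy absorption argument. Initialize $D = \emptyset$ and iterate: while the residual graph $H := G - N[D]$ contains a vertex of degree at least $k+1$, pick such a vertex $v$ and add it to $D$. Upon termination $\Delta(G-N[D]) \leq k$, so $D$ is a $K_{1,k+1}$-isolating set. The accounting is that $v$ lies outside $N[D]$ before being added, and has at least $k+1$ neighbors in $H$ (also outside the previous $N[D]$); hence $N[D]$ grows by at least $k+2$ fresh vertices per iteration. Since $|N[D]| \leq n$ throughout, the loop runs at most $n/(k+2)$ times, giving $\iota_k(G) \leq |D| \leq n/(k+2)$.

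\textbf{Part (ii).} For a tree $T \neq K_{1,k+1}$, I would induct on $n = |V(T)|$, with small trees verified directly as base cases. For the step, pick a longest path $v_0 v_1 \cdots v_\ell$; by maximality $v_0$ is a leaf and every neighbor of $v_1$ except $v_2$ is a leaf. The goal is to locate a vertex near the path end whose closed neighborhood in $T$ has size at least $k+3$, add it to the isolating set, then apply the inductive hypothesis to each component of the remainder. If $\deg_T(v_1) \geq k+2$, then $v_1$ itself works: it absorbs itself, its $\geq k+1$ leaf neighbors, and $v_2$, for a total of at least $k+3$. Otherwise $\deg_T(v_1) \leq k+1$, and I would shift focus to $v_2$, pooling the leaves hanging from $v_1$ with those hanging from $v_2$'s other children (each such subtree is either a leaf or a short "cherry," by maximality of the path) together with the edge $v_2 v_3$.

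\textbf{Main obstacle.} The hard step is the second subcase of part (ii) together with the accompanying book-keeping. One must verify that the closed neighborhood of $v_2$, or a controlled enlargement of it, always houses at least $k+3$ vertices whenever $\deg_T(v_1) \leq k+1$; and simultaneously check that after deletion no residual component is an exceptional $K_{1,k+1}$ (which would force the weaker $n/(k+2)$ bound from part (i) on that piece and destroy the amortization). I would close this gap by a charging argument: attribute every vertex of $T$ uniquely to the isolating vertex that absorbed it, with surplus leaves charged backward along the deleted edge so that each chosen isolating vertex carries a charge of at least $k+3$.
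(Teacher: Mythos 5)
A remark on scope first: the paper does not prove this statement at all — Theorem \ref{Theorem1.1} is quoted verbatim from Caro and Hansberg \cite{Caro2017} — so there is no in-paper proof to compare against, and your attempt has to be judged on its own terms. Your part (i) is complete and correct: the greedy loop adds a vertex of residual degree at least $k+1$, each addition enlarges $N[D]$ by at least $k+2$ previously uncovered vertices, and termination with $\Delta(G-N[D])\leq k$ gives $|D|\leq \frac{n}{k+2}$. This is the standard (and, to my knowledge, the original) argument.

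Part (ii), however, has a genuine gap, and you have located it yourself without closing it. The only case you actually prove is $\deg_T(v_1)\geq k+2$, and even there the induction is not safe: after deleting $N[v_1]$ (which contains $v_2$), every child $u\neq v_1$ of $v_2$ off the path becomes the root of a component that, by maximality of the path, is a star — and that star can be exactly $K_{1,k+1}$. The inductive hypothesis does not apply to such a component, and falling back on part (i) yields $1\leq\frac{k+2}{k+2}$, which is too weak to preserve the $\frac{n}{k+3}$ amortization. In the complementary case $\deg_T(v_1)\leq k+1$ you propose to shift to $v_2$ and invoke a charging argument, but no such argument is given, and the natural candidate set (the full subtree hanging below $v_2$, with $D=\{v_2\}$) need not contain $k+3$ vertices: if $v_1$ is the only child of $v_2$ off the path and $v_1$ has a single leaf, that subtree has only $3$ vertices, so one must climb to $v_3$ or beyond and the case analysis multiplies. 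A complete proof has to specify, in each configuration, which vertex enters $D$, which set $S$ with $|S|\geq k+3$ is deleted (with no edges from $S\setminus N[D]$ to the rest, in the spirit of Lemma \ref{Lemma2.2}), and why no component of $T-S$ is $K_{1,k+1}$ — or how an exceptional component is absorbed. None of that bookkeeping is present, so part (ii) as written is a plan rather than a proof.
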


Moreover, the special cases of small $k$ receive more attention from scholars. Taking $k=0$, a subset $D \subseteq V(G)$ is an {\it isolating set} (instead of 0-isolating set, the same below) of $G$ if $G-N[D]$ is an {\it edgeless graph}, and the {\it isolation number} of $G$, denoted by $\iota(G)$, is the cardinality of a smallest isolating set of $G$.

Caro and Hansberg \cite{Caro2017} also proved the following pioneering theorem.

\begin{theorem}(Caro and Hansberg \cite{Caro2017}).
	If $G \notin \{K_2,C_5\}$ is a connected graph of order $n$, then $\iota(G) \leq \frac{n}{3}$.
\end{theorem}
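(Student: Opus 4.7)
The plan is to prove the theorem by strong induction on $n = |V(G)|$. The base cases $n \leq 5$ are verified by direct inspection: among the finitely many connected graphs on at most $5$ vertices, the only ones for which $\iota(G)/n > 1/3$ are exactly $K_2$ and $C_5$, matching the list of exceptions.

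For the inductive step, take a connected graph $G$ of order $n \geq 6$ with $G \notin \{K_2, C_5\}$. If $\Delta(G) \leq 2$, then $G$ is a path $P_n$ or a cycle $C_n$, and the standard closed-form values of $\iota(P_n)$ and $\iota(C_n)$ (of the order of $n/4$ in this range) immediately give $\iota(G) \leq n/3$. So assume $\Delta(G) \geq 3$ and pick a vertex $v$ of maximum degree, so that $|N[v]| \geq 4$. Letting $H_1, \dots, H_s$ denote the connected components of $G - N[v]$ and choosing an optimal isolating set $D_i$ in each, the set $\{v\} \cup D_1 \cup \cdots \cup D_s$ is an isolating set of $G$, hence
\[
\iota(G) \leq 1 + \sum_{i=1}^{s} \iota(H_i).
\]
If none of the $H_i$ is isomorphic to $K_2$ or $C_5$, the inductive hypothesis yields $\iota(H_i) \leq |V(H_i)|/3$ for each $i$, and summing gives
\[
\iota(G) \leq 1 + \frac{n - |N[v]|}{3} \leq 1 + \frac{n-4}{3} < \frac{n}{3},
\]
which closes the induction.

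The crux of the argument is thus to control the \emph{bad components} in $G - N[v]$: a $K_2$-component contributes an excess of $1/3$ over the target, as does a $C_5$-component. The plan is to exploit the connectedness of $G$—every such component must be joined to $N(v)$ by at least one edge—either to choose $v$ so that no bad components appear, or to locally modify the isolating set to absorb the excess. For example, if $\{a,b\}$ is a $K_2$-component and $a$ is adjacent to some $u \in N(v)$, one can reshuffle $D$ near $u$ so as to cover $\{a,b\}$ together with part of $N[v]$ without paying for both separately; an analogous but more involved argument handles $C_5$-components (for instance by picking a vertex in the $C_5$ with a neighbor in $N(v)$ and rerouting). The main obstacle is precisely this case analysis: it must be robust enough to show that in every configuration $v$ (together with possibly a small local change of $D$) can be chosen so that the $1/3$ deficits coming from bad components are paid for by a strictly larger $|N[v]|$ or by a merger with neighbors in $N(v)$. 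Once this book-keeping closes, the induction yields the bound $\iota(G) \leq n/3$.
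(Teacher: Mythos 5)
This statement is quoted from Caro and Hansberg \cite{Caro2017}; the paper itself gives no proof of it, so I can only measure your proposal against the analogous arguments the paper does carry out for Theorems \ref{Theorem1.6} and \ref{Theorem1.7}, which follow exactly the skeleton you describe (induction on $n$, a maximum-degree vertex $v$, the components of $G-N[v]$, and the clean bound $\iota(G)\leq 1+\frac{n-4}{3}<\frac{n}{3}$ when no component is exceptional).

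The genuine gap is that you stop at the point where the actual work begins. Your treatment of the bad components ($K_2$ and $C_5$ in $G-N[v]$) is a plan, not an argument: you say one should ``reshuffle $D$ near $u$'' or ``choose $v$ so that no bad components appear,'' and you yourself label this ``the main obstacle.'' Two concrete difficulties are left unaddressed. First, the natural local modification is to delete a set $X$ consisting of a bad component together with its attachment vertex $u\in N(v)$ and to induct on $G-X$; but $G-X$ need not be connected, and its components (including the one containing $v$) may themselves be $K_2$ or $C_5$, so the induction hypothesis does not apply to them and a further layer of case analysis is forced --- this is precisely what Subcase 1.2, Claim \ref{ClaimB}, and Subcases 2.2.1--2.2.4 of the paper's proof of Theorem \ref{Theorem1.6} spend several pages resolving in the analogous setting. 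Second, several bad components may attach to $N(v)$ through distinct vertices, or a single attachment vertex may serve many bad components at once, and one must verify that the one extra vertex placed in $D$ genuinely pays for all of the accumulated $\frac{1}{3}$-deficits simultaneously (the paper's Case 1 versus Case 2 dichotomy, and the counting with $k_3,k_7,k_{11}$, exist exactly for this bookkeeping). Until this case analysis is written out and shown to close in every configuration, the induction does not close and the proposal is not a proof.
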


Taking $k=1$, a subset $D \subseteq V(G)$ is a {\it 1-isolating set} of $G$ if $G-N[D]$ consists of isolated edges and isolated vertices only. The {\it 1-isolation number} of $G$, denoted by $\iota_1(G)$, is the cardinality of a smallest 1-isolating set of $G$. For maximal outerplanar graphs (mops), the following results on $\iota(G)$ and $\iota_1(G)$ were obtained.

\begin{theorem}
	If $G$ is a mop of order $n$, then the following holds:
	
	(i) (\cite{Caro2017}). If $n \geq 4$, then $\iota(G) \leq \frac{n}{4}$.
	
	(ii) (\cite{Borg&Kaemawichanurat2020}). If $n \geq 5$, then $\iota_1(G) \leq \frac{n}{5}$.
\end{theorem}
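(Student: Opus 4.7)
The plan is to prove both parts by strong induction on $n$, exploiting the ear decomposition of maximal outerplanar graphs (mops). Recall that every mop $G$ of order $n \geq 4$ contains at least two degree-two vertices (ears); equivalently, the weak dual of $G$ is a tree whose leaves correspond to triangles containing such ears. The inequalities $\iota(G) \leq n/4$ and $\iota_1(G) \leq n/5$ both reflect an amortized accounting: each vertex placed in the (1-)isolating set must account for at least $4$ (resp.\ $5$) vertices of $G$.

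For part (i), I would first dispose of the base cases $n \in \{4,5,6,7\}$ by direct inspection: the finitely many mops of these orders each admit an isolating set of size at most $\lfloor n/4 \rfloor$. For the inductive step $n \geq 8$, choose a leaf-triangle $uvw$ of the weak dual with $u$ the ear, so that the edge $vw$ is shared with a second internal triangle $vwx$. After choosing between $v$ and $w$ the vertex of higher degree, place it into the isolating set $D$; then $|N[v]| \geq 4$. The induced subgraph $G - N[v]$ decomposes as a disjoint union of smaller mops together with possibly a few very small leftover pieces; apply the induction hypothesis to each large component and check that the resulting set has size at most $n/4$.

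For part (ii), the argument is structurally the same, but the bound $n/5$ is tighter, so the goal is to find a vertex $v$ with $|N[v]| \geq 5$ (or a pair $\{v_1,v_2\}$ whose combined closed neighbourhoods account for at least $10$ vertices while leaving a tractable remainder). Because mops have a fan-like structure, two consecutive triangles along the weak dual typically produce a vertex of degree at least $4$; the only obstructions are very small mops, which are handled as extra base cases. The forbidden behaviour in $G - N[D]$ is now $P_3$ rather than $P_2$, so one may even permit a single residual edge inside each affected triangle, which gives exactly the slack needed to push the bound from $n/4$ down to $n/5$.

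The main obstacle in both parts is the bookkeeping when $G - N[v]$ has several components of order below the threshold ($4$ for part (i), $5$ for part (ii)). Such small remainders are too small for the induction hypothesis to apply cleanly. One must either absorb those vertices into $N[v]$ via a more careful choice of $v$ — typically by looking two triangles deep into the weak dual and splitting into subcases depending on which edges are shared — or verify separately that remainders of order $\leq 3$ (resp.\ $\leq 4$) contribute $0$ to $\iota$ (resp.\ $\iota_1$) and hence fit within the amortized budget. This delicate case analysis is precisely what is carried out in \cite{Caro2017} and \cite{Borg&Kaemawichanurat2020}, and I expect it to be the technically hardest part of any such proof.
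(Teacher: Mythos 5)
First, note that the paper does not prove this statement at all: it is quoted as a known result from \cite{Caro2017} and \cite{Borg&Kaemawichanurat2020}, so there is no in-paper proof to compare against and your proposal must stand on its own. As written, it does not. The inductive skeleton (choose $v$ with $|N[v]|\geq 4$, resp.\ $\geq 5$, and recurse on $G-N[v]$) is a reasonable general shape, but the step you defer --- the treatment of small components of $G-N[v]$ --- is exactly where the whole proof lives, and the specific claim you offer to dispose of it is false. A component of $G-N[v]$ of order $2$ or $3$ that contains an edge has isolation number $1$, not $0$: for instance $\iota(K_2)=1>\frac{2}{4}$ and $\iota(P_3)=\iota(C_3)=1>\frac{3}{4}$; likewise $\iota_1(P_3)=1>\frac{3}{5}$, and the order-$4$ mop (a triangulated quadrilateral) has maximum degree $3$ and hence $\iota_1\geq 1>\frac{4}{5}$. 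Any single such component already destroys the amortized budget. There is also an unproved structural assertion that $G-N[v]$ decomposes into smaller mops: its components can be isolated vertices, single edges or paths, to which the induction hypothesis (which requires order $\geq 4$, resp.\ $\geq 5$) simply does not apply.

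The honest reading of your proposal is that it is a plan which explicitly delegates the entire difficulty (``this delicate case analysis is precisely what is carried out in \cite{Caro2017} and \cite{Borg&Kaemawichanurat2020}'') back to the results being proved, which is circular. If you want a model of what a complete treatment of this obstruction looks like, the present paper's own proofs of Theorems \ref{Theorem1.6} and \ref{Theorem1.7} are instructive: there, the components of $G-N[v]$ isomorphic to $P_3$, $C_3$, $C_7$ or $C_{11}$ are precisely the ones for which the $\frac{n}{4}$ induction fails, and Sections 3 and 4 are devoted almost entirely to absorbing them, by enlarging the deleted set (the sets $X$, $Y$, $Z$) and exhibiting explicit $1$-isolating sets in a long case analysis. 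An analogous bad-component analysis, adapted to the fan structure of maximal outerplanar graphs, is what your sketch is missing; until it is supplied, both parts (i) and (ii) remain unproved.
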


\begin{theorem}
	If $G$ is a mop of order $n$ with $n_2$ vertices of degree 2, then the following holds:
	
	(i)(\cite{Tokunaga2019}) If $n \geq 5$, then
	\begin{displaymath}
		\iota(G)\leq \left\{
		\begin{array}{ll}
			\frac{n+n_2} {5},& \text{if}\ n_2 \leq \frac {n} {4},\\
			\frac {n-n_2} {3},& \text{otherwise}.
		\end{array} \right.
	\end{displaymath}
	
	(ii)(\cite{Borg&Kaemawichanurat2020}) If $n\geq 5$, then
	\begin{displaymath}
		\iota_1(G)\leq \left\{
		\begin{array}{ll}
			\frac{n+n_2} {6},& \text{if}\ n_2 \leq \frac {n} {3},\\
			\frac {n-n_2} {3},& \text{otherwise}.
		\end{array} \right.
	\end{displaymath}
\end{theorem}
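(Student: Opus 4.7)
The plan is to prove both parts by strong induction on $n$, exploiting the ear structure of maximal outerplanar graphs. Recall that for a mop $G$ with $n \geq 4$ the outer boundary is a unique Hamilton cycle $C = v_1 v_2 \cdots v_n$ and the interior is triangulated by chords; an \emph{ear} is a triangle two of whose sides lie on $C$, and its apex is a degree-$2$ vertex. Thus the degree-$2$ vertices correspond exactly to the ear apices, and every mop has at least two ears. Small base cases ($n \leq 9$, say) can be dispatched by direct enumeration.

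To run the inductive step I would locate a maximal block of consecutive degree-$2$ vertices on $C$, say $v_i, v_{i+1}, \ldots, v_{i+t-1}$, together with their ``supporting'' chord structure. Depending on $t$ and the degrees of the nearby non-ear vertices, I would add a single carefully chosen vertex $u$ to the isolating set $D$ so that $N[u]$ absorbs this block and some of its neighbors. For part~(i) a well-placed $u$ should dominate five or more vertices, which accounts for the $\tfrac{n+n_2}{5}$ rate; for part~(ii), a well-placed $u$ should isolate six or more vertices, exploiting the fact that an isolated edge may be left in $G - N[D]$, which accounts for the $\tfrac{n+n_2}{6}$ rate. After removing $N[u]$, the remainder is a disjoint union of smaller mops (together with possibly a pendant edge or two), to which the inductive hypothesis is applied.

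The main obstacle I anticipate is the piecewise nature of both bounds: when $n_2$ lies below its threshold ($n/4$ in (i), $n/3$ in (ii)) the favorable formula applies, otherwise the $\tfrac{n-n_2}{3}$ formula does. After extracting $N[u]$ the residual mop can cross the threshold in either direction, so the induction has to be verified across the transition; this is handled by an arithmetic check that exploits the fact that the two formulas agree exactly at the threshold (both giving $n/4$ for (i) and $2n/9$ for (ii)). A further subtlety is that very long chains of degree-$2$ vertices decrease $n_2$ rapidly relative to $n$, while short chains barely move it, so the case analysis must be engineered so that the per-vertex cost added to $|D|$ never exceeds the proportional decrease allowed by the relevant bound on the reduced mop.

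Finally, I would confirm sharpness by exhibiting tight families of mops: beaded chains of triangles saturate the $\tfrac{n+n_2}{6}$ and $\tfrac{n+n_2}{5}$ rates in the favorable regime, while fan-like mops with many consecutive degree-$2$ apices saturate the $\tfrac{n-n_2}{3}$ rate. Verifying that these extremal graphs cannot be improved completes the picture and also suggests the correct choice of $u$ in the non-extremal cases.
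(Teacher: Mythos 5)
First, note that the paper you are working from does not prove this statement at all: it is quoted as background (part (i) from \cite{Tokunaga2019}, part (ii) from \cite{Borg&Kaemawichanurat2020}), so there is no in-paper proof to compare against. Your submission therefore has to stand on its own, and as it stands it is a plan rather than a proof: the sentence ``depending on $t$ and the degrees of the nearby non-ear vertices, I would add a single carefully chosen vertex $u$'' defers exactly the case analysis in which the entire difficulty of both results lives.

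Beyond incompleteness, there are two concrete gaps in the plan itself. First, the claim that $G-N[u]$ is ``a disjoint union of smaller mops (together with possibly a pendant edge or two)'' is false in general: deleting a closed neighbourhood from a mop leaves outerplanar pieces that need not be $2$-connected or internally triangulated, so the induction hypothesis, which is stated only for mops with a prescribed count $n_2$ of degree-$2$ vertices, does not apply to them. You must either strengthen the statement to a larger hereditary class or show that $u$ can always be chosen so that the residue genuinely decomposes into mops; neither is done. Second, your amortized accounting is wrong as stated: for part (i) you need the quantity $n+n_2$ to drop by at least $5$ per vertex added to $D$, i.e.\ $k+j-c\geq 5$ where $k=|N[u]|$, $j$ is the number of degree-$2$ vertices deleted, and $c$ is the number of \emph{newly created} degree-$2$ vertices in the residue (boundary vertices of the cut routinely become ear apices). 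Saying that $u$ ``dominates five or more vertices'' only controls $k$ and ignores $j-c$, which can easily be negative; the analogous issue affects the $\frac{n+n_2}{6}$ rate in part (ii). Your observation that the two formulas in each part agree at the threshold ($n/4$ giving $n/4$, and $n/3$ giving $2n/9$) is correct and does dispose of the regime-crossing worry, since the induction may simply invoke the minimum of the two bounds; but the two gaps above are where the real work is, and they are precisely the points the cited proofs are organized around.
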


The {\it girth} of a graph $G$, denoted by $g(G)$, is the length of a shortest cycle of $G$. Zhang and Wu \cite{Zhang2021} investigated the 1-isolation number for general graphs, and they proved the following results.

\begin{theorem}(Zhang and Wu \cite{Zhang2021}).\label{Theorem1.5}
	If $G$ is a connected graph of order $n$, then the following holds: 
	
	(i)  If $G \notin \{P_3,C_3,C_6\}$, then $\iota_1(G) \leq \frac {2} 7 n$.
	
	(ii) If $G \notin \{P_3,C_7,C_{11}\}$ and $g(G) \geq 7$, then $\iota_1(G) \leq \frac{n}{4}$.
\end{theorem}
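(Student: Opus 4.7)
The plan is to proceed by strong induction on $n = |V(G)|$, with a direct verification for small $n$ as base case (using in particular that $\iota_1(P_m)$ and $\iota_1(C_m)$ satisfy the bound for all $m$ outside the listed exceptions, so the case $\Delta(G) \le 2$ is disposed of immediately). The objective of the inductive step is to exhibit a vertex $v \in V(G)$ and a set $S$ with $N[v] \subseteq S \subseteq V(G)$ and $|S| \geq 4$, such that every component of $G - S$ is either trivial (of order at most $2$) or again satisfies the hypothesis of the theorem. Since any edge of $G$ with both endpoints outside $N[v]$ already lies in $G - S$, a 1-isolating set $D'$ of $G - S$ together with $v$ forms a 1-isolating set of $G$; applying the induction hypothesis component-by-component to $G - S$ then yields
\[
\iota_1(G) \;\le\; 1 + \iota_1(G - S) \;\le\; 1 + \frac{n - |S|}{4} \;\le\; \frac{n}{4}.
\]

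To locate the pair $(v, S)$, I would analyze the structure near an endpoint of a longest path $P = v_0 v_1 \cdots v_k$ in $G$. Maximality of $P$ forces $v_0$ to be a leaf, and any neighbor of $v_i$ off $P$ (for small $i$) to be itself a leaf attached only to $v_i$. I then perform a case analysis on the degrees of $v_1, v_2, v_3$: if some $v_i$ with $i \in \{1,2,3\}$ has degree at least $3$, I take $v = v_i$ and $S = N[v_i]$, possibly enlarged by the short initial tail of $P$ which becomes a union of isolated vertices or an isolated $K_2$ after removing $N[v_i]$, and check $|S| \ge 4$; if instead $\deg(v_1) = \deg(v_2) = \deg(v_3) = 2$, then $G$ contains a pendant path $v_0 v_1 v_2 v_3 v_4$, and taking $v = v_3$ with $S = \{v_1, v_2, v_3, v_4\}$ gives $|S| = 4$ and leaves $v_0$ as a stranded isolated vertex in $G - S$. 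The hypothesis that $G$ has no $6$-cycle (respectively, no induced $5$- or $6$-cycle) is used in every case to control how vertices at distance two from $v$ interact with $N[v]$, preventing the removal of $S$ from leaving behind a pathological residue.

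The principal obstacle will be to control the components of $G - S$ so that none of them is one of the excluded graphs $P_3, C_3, C_7, C_{11}$, and, in the induced-cycle case, so that no new forbidden induced cycle is created by the deletion. Here the cycle hypothesis enters essentially: a bad component of $G - S$ typically forces a $6$-cycle of $G$, or an induced $5$- or $6$-cycle of $G$, to appear via edges linking the component to $N[v]$ together with short paths inside $N[v]$, contradicting the hypothesis. Cataloguing which choice of $v$ prevents each kind of bad component requires small local modifications--shifting $v$ to a neighboring vertex along $P$, enlarging $S$ by one extra vertex while rebalancing the counting, or treating a handful of small sub-configurations by direct computation--and the bulk of the technical work lies in carrying out this case analysis exhaustively and separately under each of the two hypotheses of the theorem, since the two conditions rule out different local attachments and must be argued in parallel.
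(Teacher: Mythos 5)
The statement you were asked to prove is Theorem 1.5, which this paper does not prove at all: it is quoted from Zhang and Wu \cite{Zhang2021}, and the paper's own inductive arguments concern the different (though related) Theorems 1.6 and 1.7. Your proposal in fact targets the hypotheses of those latter theorems (``no $6$-cycles'', ``no induced $5$- and $6$-cycles'', exceptional set $\{P_3,C_3,C_7,C_{11}\}$) rather than those of Theorem 1.5, and this mismatch produces a fatal omission: part (i) --- the bound $\iota_1(G)\le \frac{2}{7}n$ for \emph{every} connected $G\notin\{P_3,C_3,C_6\}$, with no restriction on cycles whatsoever --- is not addressed anywhere in your sketch. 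Your entire accounting is calibrated to the ratio $\frac14$ (you need $|S|\ge 4$ so that $1+\frac{n-|S|}{4}\le\frac n4$); the $\frac27$ bound needs $|S|\ge\frac72$, a different family of exceptional components (those $H$ with $\iota_1(H)>\frac27|V(H)|$, e.g.\ $C_6$), and a different case analysis, and nothing in the proposal transfers to it.

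For part (ii), the central claim of your strategy --- that one can always choose $v$ and $S\supseteq N[v]$ so that no component of $G-S$ is an excluded graph, because a bad component would force a forbidden cycle through $N[v]$ --- is false. The extremal graphs $G_t$ constructed in Section 1 (and their girth-$\ge 7$ analogues built from $H_i\in\{P_3,C_7,C_{11}\}$) satisfy the cycle hypotheses, yet $G-N[v]$ has components isomorphic to $P_3$, $C_7$ or $C_{11}$ for essentially every choice of $v$; these components are not artifacts that a forbidden cycle rules out, they are the reason the bound $\frac n4$ is tight. They cannot be avoided; they must be absorbed: one enlarges $S$ to include the bad components and places extra vertices of $D$ inside them so that $|D|\le\frac14|S|$ still holds and $E(S\setminus N[D],V\setminus S)=\emptyset$ (the paper's Lemma \ref{Lemma2.2}), which is exactly the content of Cases 1 and 2 in Sections 3--4 and, one expects, of Zhang and Wu's own proof of part (ii). Your sketch defers precisely this absorption step --- which is where all the work lies --- so as it stands the proposal neither touches part (i) nor contains the key mechanism needed for part (ii).
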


The readers are referred to \cite{Borg2020,Borg&Kaemawichanurat2023,Borg&Fenech2020,Favaron2021,Yan2022,Zhang2022} for more related problems and results on isolating sets in graphs. Note that every dominating set of a graph $G$ is a $k$-isolating set of $G$ for any $k \geq 0$. The research of isolation in graphs is a natural extension of the classical domination theory. For results on domination parameters in graphs with forbidden structures, we refer the readers to \cite{Bollobas1979,Brown2007,Chen2011,Cho2023,Dorbec2015,Harant2009,Henning2009}.

In this paper, we shall further study the 1-isolation number of graphs. We obtain the following two new results, each of them extending the results of Theorem \ref{Theorem1.1} (ii) when $k=1$ and Theorem \ref{Theorem1.5} (ii).

\begin{theorem}\label{Theorem1.6}
If $G \notin \{P_3,C_3,C_7,C_{11}\}$ is a connected graph of order $n$ without $6$-cycles, then $\iota_1(G) \leq \frac{n}{4}$.
\end{theorem}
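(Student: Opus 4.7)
\textbf{Proof plan for Theorem \ref{Theorem1.6}.} The plan is to argue by strong induction on $n$, extending the framework of Zhang and Wu's proof of Theorem \ref{Theorem1.5}(ii) from the girth-$\ge 7$ setting to the wider class of $6$-cycle-free graphs (which may still contain $3$-, $4$-, or $5$-cycles). I would first dispose of the base cases and the case $\Delta(G)\le 2$: when $\Delta(G)\le 2$, the graph $G$ is a path or a cycle $C_m$ with $m\ne 6$, and a direct calculation (taking every fourth vertex in a path, and a maximum packing of vertices at distance $\ge 4$ in a cycle) shows $\iota_1(G)\le n/4$ for all such $G$ outside the excluded list $\{P_3,C_3,C_7,C_{11}\}$.

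For the inductive step, suppose $G$ is a minimum counterexample of order $n$ with $\Delta(G)\ge 3$. I would search for a vertex $v$ of a carefully chosen type (for instance, of maximum degree, or lying on a specified short cycle) together with a set $S\subseteq V(G)$ satisfying $v\in S$, $|S|\ge 4$, $N[v]\subseteq N[S]$, and such that every component of $G-S$ either has maximum degree at most $1$ (so contributes nothing to the isolating set) or again satisfies the hypothesis of the theorem (i.e.\ is connected, $6$-cycle-free, and not in $\{P_3,C_3,C_7,C_{11}\}$). Once such an $S$ is exhibited, applying induction to each non-trivial component of $G-S$ and adjoining $v$ yields a 1-isolating set of $G$ of size at most $1+(n-|S|)/4\le n/4$, contradicting the choice of $G$.

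The central combinatorial content will live in the choice of $(v,S)$. The no-$6$-cycle condition is used to prevent dangerous overlaps in the BFS structure around $v$: two internally disjoint paths of length $3$ between the same pair of vertices, or a vertex at distance $2$ from $v$ attached back to $v$ by a path of length $4$, would force a $6$-cycle. This lets me control what the second and third neighborhoods of $v$ look like and gives enough room to build $S$. If $|N[v]|\ge 4$ I would take $S=N[v]$; if $\deg(v)=3$ with a flat second neighborhood I would enlarge $S$ by a carefully chosen distance-$2$ vertex.

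The hardest part, as is typical in this line of work, will be ensuring that no component of $G-S$ belongs to the exception list. For a residual $C_7$ or $C_{11}$ component, any edge from this cycle into $S$ combined with a suitable arc inside the cycle would produce a $6$-cycle in $G$, which is forbidden; so such components can be ruled out except in a few very constrained local configurations, which must then be handled by enlarging $S$ or by re-choosing $v$. Residual $P_3$ or $C_3$ components would be absorbed into $S$ (at the cost of $|S|$ growing past $4$, still compatible with the counting since each absorbed component contributes enough vertices). Enumerating these reducible configurations around the chosen vertex $v$ in the $6$-cycle-free setting is where the technical work will concentrate; once the list is complete, the induction closes cleanly.
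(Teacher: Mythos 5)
Your overall skeleton (induction on $n$, delete a set $S$ built around a maximum-degree vertex $v$, recurse on the components of $G-S$, and treat the components isomorphic to $P_3,C_3,C_7,C_{11}$ as the exceptional cases) is indeed the paper's strategy. But there is a genuine gap in the one place where you describe the actual mechanism: you claim that a residual $C_7$ or $C_{11}$ component of $G-S$ can essentially be ruled out because ``any edge from this cycle into $S$ combined with a suitable arc inside the cycle would produce a $6$-cycle.'' This is false when the component is attached to $S$ by a single edge: a $C_7$ hanging off a vertex $x\in N(v)$ by one pendant edge creates no cycle through $S$ at all, and the graph is perfectly $6$-cycle-free. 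Indeed the paper's extremal graphs $G_t$ are built from exactly these configurations, so such components are not rare degeneracies --- they are the tight cases. They cannot be ruled out and cannot be absorbed by adjoining the single vertex $v$: a residual $C_7$ needs $2$ vertices in the isolating set and a residual $C_{11}$ needs $3$, so your bound $1+(n-|S|)/4$ breaks. The arithmetic only closes because $2=\frac{1}{4}(|V(C_7)|+1)$ and $3=\frac{1}{4}(|V(C_{11})|+1)$ exactly, i.e.\ each exceptional component must be grouped with precisely one extra vertex of $S$ (its attachment vertex $x$) and paid for with the corresponding number of isolating vertices; this zero-slack bookkeeping (the paper's sums $1+k_7+2k_{11}$ versus $\frac{1}{4}(1+3k_3+7k_7+11k_{11})$) is the heart of the proof and is absent from your plan.

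The second, related gap is that you defer to ``enumerating reducible configurations'' the entire situation in which, after deleting $X=V(H^*)\cup\{x\}$ for an exceptional component $H^*$ with $|N(H^*)|\ge 2$, the component containing $v$ is itself one of $P_3,C_3,C_7,C_{11}$. This is not a routine residue: it forces $\Delta(G)=3$, and the paper needs a long explicit case analysis over all pairs $(H^*,G_v)\in\{P_3,C_3,C_7,C_{11}\}^2$, constructing different $1$-isolating sets depending on which cross edges between the two exceptional pieces exist, with the no-$6$-cycle hypothesis used to forbid specific edges rather than whole components. Without at least identifying that this is where the work lies and exhibiting the isolating sets in a representative case, the plan does not yet constitute a proof.
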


\begin{theorem}\label{Theorem1.7}
    If $G \notin \{P_3,C_3,C_7,C_{11}\}$ is a connected graph of order $n$ without induced 5- and 6-cycles, then $\iota_1(G) \leq \frac{n}{4}$.
\end{theorem}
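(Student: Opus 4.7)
The plan is to prove Theorem~\ref{Theorem1.7} by strong induction on $n=|V(G)|$, in the spirit of Zhang and Wu's proof of Theorem~\ref{Theorem1.5}(ii) and extending it to the broader class of graphs that may still contain (non-induced) 5- and 6-cycles. First I would dispose of the small base cases by direct enumeration of connected graphs on at most a handful of vertices that satisfy the hypothesis. The inductive step aims to produce a vertex $v\in V(G)$ and a set $S$ with $N_G[v]\subseteq S$ and $|S|\ge 4$, chosen so that every 1-isolating set $D'$ of $G-S$ yields a 1-isolating set $\{v\}\cup D'$ of $G$. Because the class of graphs without induced 5- or 6-cycles is closed under taking induced subgraphs, each component of $G-S$ automatically inherits the structural hypothesis, and induction gives
\[
\iota_1(G)\;\le\;1+\iota_1(G-S)\;\le\;1+\frac{n-|S|}{4}\;\le\;\frac{n}{4}.
\]

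To choose $(v,S)$, I would fix a BFS-tree rooted at an end-vertex of a longest induced path (or a vertex of minimum eccentricity), and take $v$ to be an extremal vertex near the leaves of that tree, so that most of $N_G[v]\cup N_G^{2}(v)$ lies in a controlled position. The key structural input is that any 5- or 6-cycle inside $N_G[v]\cup N_G^{2}(v)$ must carry a chord, which forces a triangle or a 4-cycle through $v$ or one of its neighbors. Exploiting such short chorded cycles, I would enlarge $N_G[v]$ into a set $S$ of size at least $4$ while preserving the property that $v$ alone 1-isolates $S$. The detailed argument splits by $\deg_G(v)$ and by whether $v$ lies on a 3-, 4-, 5-, or 6-cycle of $G$; in each subcase the chord structure dictated by the forbidden induced subgraphs pins down the local configuration explicitly.

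The main obstacle is two-fold. First, when $\deg_G(v)\le 2$ we have $|N_G[v]|<4$, and $S$ must be padded with further vertices chosen near $v$; the no-induced-$C_5/C_6$ condition has to be used to guarantee that such padding can always be carried out without breaking the property that $\{v\}$ 1-isolates $S$. Second, one must rule out the possibility that some component of $G-S$ is isomorphic to $P_3$, $C_3$, $C_7$, or $C_{11}$, because the induction hypothesis then fails directly; in those cases I would either locally modify $S$ to erase the exceptional component or add one extra vertex to the isolating set and verify that the enlarged $|S|$ absorbs the cost within the $n/4$ budget. Compared with the girth-$\ge 7$ regime of Theorem~\ref{Theorem1.5}(ii), the difficulty is precisely that the 2-neighborhood of $v$ is no longer tree-like: the chords forced in 5- and 6-cycles create overlapping short cycles around $v$, and the technical heart of the argument is a careful case-by-case verification that a valid pair $(v,S)$ can always be found.
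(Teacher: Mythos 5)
Your overall skeleton (induction on $n$, delete a set $S$ containing a closed neighbourhood, use that the no-induced-$C_5$/$C_6$ condition is hereditary) matches the paper's, but the proposal glosses over exactly the part that constitutes the entire proof, and the specific mechanism you propose cannot work. The central problem is your insistence on a pair $(v,S)$ such that the \emph{single} vertex $v$ 1-isolates $S$, giving $\iota_1(G)\le 1+\iota_1(G-S)$. Look at the extremal graphs $G_t$ in Section 1: when $G-N[v]$ has a component isomorphic to $C_7$ or $C_{11}$ attached to a neighbour of $v$, any 1-isolating set must spend $2$ (resp.\ $3$) vertices on that component together with its attachment vertex, covering $8$ (resp.\ $12$) vertices — the ratio is exactly $1/4$ with no slack. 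So your fallback of ``add one extra vertex to the isolating set and verify that the enlarged $|S|$ absorbs the cost'' is arithmetically false for a $C_{11}$ component, and since the bound is attained with equality there is no budget to absorb anything. The correct tool is the paper's Lemma \ref{Lemma2.2}, which permits a 1-isolating set $D_S$ of $G[S]$ of size up to $|S|/4$ (not size $1$) provided $E(S\setminus N[D_S],V\setminus S)=\emptyset$; the whole difficulty is constructing such a $D_S$ when components of $G-N[v]$ are $\mathcal{S}$-graphs, and your proposal defers precisely this.

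Two further gaps. First, you give no mechanism that makes the case analysis finite. The paper takes $v$ of \emph{maximum} degree and proves (Claim \ref{ClaimC}) that $|\mathcal{H}_b|+1\le\Delta\le|\mathcal{H}_b|+2$ by exhibiting two different 1-isolating sets of $G[X]$ whose cost estimates squeeze $\Delta$ from both sides; combined with a pigeonhole argument for $\Delta\ge5$, this reduces everything to $\Delta\in\{3,4\}$, where the chords forced by the forbidden induced $C_5$ and $C_6$ can be enumerated. Your BFS-tree/longest-induced-path choice of $v$ comes with no analogous quantitative control, and your ``key structural input'' (every 5- or 6-cycle near $v$ carries a chord) is true but is only the starting point: the bulk of the paper's Cases 2 and 3 is the enumeration of which chords can occur, and several configurations are eliminated only because a chord would itself create a new induced $C_5$ or $C_6$ elsewhere. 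Second, your claim that one can ``enlarge $N_G[v]$ into a set $S$ of size at least $4$ while preserving the property that $v$ alone 1-isolates $S$'' is not justified and is not what happens in the troublesome cases; when $\Delta=3$ and $G_v$ itself is an $\mathcal{S}$-graph the paper must isolate $Y=X\cup V(G_v)$ with a multi-vertex set chosen according to roughly a dozen adjacency subcases. As written, the proposal is a plausible opening move plus an acknowledgement that the hard cases exist; it does not contain the ideas needed to close them.
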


Both two bounds in Theorems \ref{Theorem1.6} and \ref{Theorem1.7} are best possible. We construct some extremal graphs as follows. Let $F$ be a connected graph of order $t \geq 1$ without $6$-cycles, or without induced 5- and 6-cycles. Suppose $V(F)=\{v_1,v_2,\cdots,v_t\}$. For each $i \in \{1,2,\cdots,t\}$, let $H_i$ be a graph isomorphic to a member of $\{P_3,C_3,C_7,C_{11}\}$. Let $G_t$ be the graph obtained from $F,H_1, H_2, \cdots,H_t$ (vertex-disjoint from each other) by joining $v_i$ to a vertex of $H_i$. Here, one can see Fig. \hyperlink{Fig1}{1} for an illustration of an example $G_4$.

Note that $G_t$ is a connected graph of order $n$ without $6$-cycles, or without induced 5- and 6-cycles, where $n = |V(G_t)|$. For each $i \in \{1,2,\cdots,t\}$, let $u_4^i$ be a vertex of $H_i \cong C_7$ which is distance 4 from $v_i$ in $G_t$, and $u_4^i$ and $(u_4^i)'$ be two vertices of $H_i \cong C_{11}$ which are distance 4 from $v_i$ in $G_t$. Set
\begin{displaymath}
	D_{i} = \left\{
	\begin{array}{ll}
		\{v_i\},& \text{if}\ H_i \in \{P_3,C_3\},\\
		\{v_i,u_4^i\},& \text{if}\  H_i \cong C_7,\\
		\{v_i,u_4^i,(u_4^i)'\},& \text{if}\  H_i \cong C_{11}.\\
	\end{array} \right.
\end{displaymath}
It is clear that $\bigcup_{1 \leq i \leq t}D_i$ is a 1-isolating set of $G_t$, and $|\bigcup_{1 \leq i \leq t}D_i| = \frac{1}{4}|V(G_t)| = \frac{n}{4}$.

On the other hand, let $S_i=\{v_i\} \cup V(H_i)$ for each $i \in \{1,2,\cdots,t\}$. It is easy to see that for any 1-isolating set $D$ of $G_t$, $|D \cap S_i| \geq \frac{1}{4}|S_i|$. Hence, we have $|D| \geq \frac{1}{4} \sum_{i=1}^{t} |S_i| = \frac{1}{4}|V(G_t)| = \frac{n}{4}$. Therefore, for any integer $t \geq 1$, $\iota_1(G_t)=\frac{n}{4}$.

\begin{figure}[h!]
    \begin{center}
    \begin{tikzpicture}[scale=.48]
    \tikzstyle{vertex}=[circle, draw, inner sep=0pt, minimum size=6pt]
    \tikzset{vertexStyle/.append style={rectangle}}
        \vertex (1) at (0,0) [scale=.75,fill=black] {};
        \vertex (2) at (0,1.5) [scale=.75,fill=lightgray] {};
        \vertex (3) at (-1.4,2.5) [scale=.75] {};
        \vertex (4) at (-1.4,3.5) [scale=.75,fill=lightgray] {};
        \vertex (5) at (-0.6,4.5) [scale=.75,fill=black] {};
        \vertex (6) at (0.6,4.5) [scale=.75,fill=lightgray] {};
        \vertex (7) at (1.4,3.5) [scale=.75] {};
        \vertex (8) at (1.4,2.5) [scale=.75] {};

        \path
        (1) edge (2)
        (2) edge (3)
        (3) edge (4)
        (4) edge (5)
        (5) edge (6)
        (6) edge (7)
        (7) edge (8)
        (2) edge (8)
        ;

        %\draw [line width=0.8pt] (0,-0.7)..controls (0.7,-0.5) and (0.7,2) ..(0,2.2) ..controls (-0.7,2) and (-0.7,-0.5) .. (0,-0.7)[dotted] ;

        %\draw [line width=0.8pt] (-0.7,4.9) ..controls (-2.7,3)  and (-1.3,2.4) ..(-0.4,4.1) [dotted] ;

        %\draw [line width=0.8pt] (-0.5,4.9) ..controls (1.6,5.2)  and (1.6,3.9) ..(-0.3,4.1) [dotted] ;

        \vertex (9) at (-4.7,0) [scale=.75,fill=black] {};
        \vertex (10) at (-4.7,1.5) [scale=.75,fill=lightgray] {};
        \vertex (11) at (-5.9,3) [scale=.75] {};
        \vertex (12) at (-3.5,3) [scale=.75] {};

        %\draw [line width=0.8pt] (-4.7,-0.7)..controls (-4,-0.5) and (-4,2) ..(-4.7,2.2) ..controls (-5.4,2) and (-5.4,-0.5) .. (-4.7,-0.7)[dotted] ;

        \path
        (9) edge (10)
        (10) edge (11)
        (12) edge (10)
        ;
        
        \path
        (11) edge (12) [dashed]
        ;

        \vertex (13) at (3.6,0) [scale=.75,fill=black] {};
        \vertex (14) at (3.6,1.5) [scale=.75,fill=lightgray] {};
        \vertex (15) at (3.6,3) [scale=.75] {};
        \vertex (16) at (3.6,4.5) [scale=.75] {};

        \path
        (13) edge (14)
        (14) edge (15)
        (15) edge (16)
        ;

        %\draw [line width=0.8pt] (3.6,-0.7)..controls (4.3,-0.5) and (4.3,2) ..(3.6,2.2) ..controls (2.9,2) and (2.9,-0.5) .. (3.6,-0.7)[dotted] ;

        \vertex (17) at (9,0) [scale=.75,fill=black] {};
        \vertex (18) at (9,1.5) [scale=.75,fill=lightgray] {};
        \vertex (19) at (7.7,1.7) [scale=.75] {};
        \vertex (20) at (6.5,2.4) [scale=.75,fill=lightgray] {};
        \vertex (21) at (6.5,3.6) [scale=.75,fill=black] {};
        \vertex (22) at (7.2,4.4) [scale=.75,fill=lightgray] {};
        \vertex (23) at (8.4,4.7) [scale=.75] {};
        \vertex (24) at (9.6,4.7) [scale=.75] {};
        \vertex (25) at (10.8,4.4) [scale=.75,fill=lightgray] {};
        \vertex (26) at (11.5,3.6) [scale=.75,fill=black] {};
        \vertex (27) at (11.5,2.4) [scale=.75,fill=lightgray] {};
        \vertex (28) at (10.3,1.7) [scale=.75] {};

        %\draw [line width=0.8pt] (9,-0.7)..controls (9.7,-0.5) and (9.7,2) ..(9,2.2) ..controls (8.3,2) and (8.3,-0.5) .. (9,-0.7)[dotted] ;

        %\draw [line width=0.8pt] (6.2,2)..controls (5.4,4.2) and (6.7,4.3) ..(7.5,4.8)..controls (7.2,4.1) and (6.5,2.1) .. (6.2,2)[dotted] ;

        \path
        (17) edge (18)
        (18) edge (19)
        (19) edge (20)
        (20) edge (21)
        (21) edge (22)
        (22) edge (23)
        (23) edge (24)
        (24) edge (25)
        (25) edge (26)
        (26) edge (27)
        (27) edge (28)
        (28) edge (18)
        ;

        \path
        (1) edge (9)
        (1) edge (13)
        (13) edge (17)
    
        ;

        \draw (-4.5,-0.2) parabola bend (2.3,-2) (8.9,-0.1) [dashed];

    \end{tikzpicture}
\end{center}
\par {\footnotesize \centerline{{\bf Fig. 1.} ~An extremal graph $G_4$ with $\iota_1(G_4) = \frac{n}{4}$. \hypertarget{Fig1}}}
\end{figure}

\noindent{\it Remark}. (i) The bounds in Theorems \ref{Theorem1.6} and \ref{Theorem1.7} can both be improved to $\lfloor \frac{n}{4} \rfloor$. (ii) For any positive integer $n$, there exists a graph $G_t'$ of order $n$ such that $\iota_1(G_t') = \lfloor \frac{n}{4} \rfloor$. In fact, $G_t'$ can be obtained from $G_t$ by adding at most three leaves to some vertices of $\bigcup_{1 \leq i \leq t}D_i$.

%(ii) There exist a number of extremal graphs attaining the bound $\iota_1(G) = \lfloor \frac{n}{4} \rfloor$. Let $G_t'$ be the graph obtained from $G_t$ by adding at most three leaves to some vertices of $\bigcup_{1 \leq i \leq t}D_i$. Then, $\iota_1(G_t') = \lfloor \frac{n}{4} \rfloor$.
    
%Let $G_t'$ be a graph constructed by adding some vertices to $G_t$ so that these vertices are adjacent to some vertices in the 1-isolating set $D$ of $G_t$ and have degree 1 in $G$.

\section {\large Preliminaries}

In this section, we state some definitions and lemmas that will be used frequently in the proofs of our main results. 

\begin{lemma}(Caro and Hansberg \cite{Caro2017}).\label{Lemma2.1} (i) If $T$ is a tree different from $P_3$, then $\iota_1(T)\leq \frac{n}{4}$. (ii) If $G \notin \{C_3,C_6,C_7,C_{11}\}$ is a cycle, then $\iota_1(G)\leq \frac{n}{4}$.
\end{lemma}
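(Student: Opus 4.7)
The plan is to treat the two parts separately. Part (i) is an immediate specialization of Theorem~\ref{Theorem1.1}(ii) at $k=1$: since $K_{1,k+1} = K_{1,2} = P_3$, the hypothesis ``$T$ different from $K_{1,k+1}$'' reads ``$T \neq P_3$'' and the conclusion $\iota_k(T) \leq n/(k+3)$ becomes $\iota_1(T) \leq n/4$. No further work is required for this part.

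For part (ii), I would give an explicit construction. Label the cycle $v_0 v_1 \cdots v_{n-1}$ cyclically, and for $D = \{v_{p_1}, \ldots, v_{p_k}\}$ with $0 \leq p_1 < \cdots < p_k < n$ define the cyclic gap lengths $d_i = p_{i+1} - p_i$ (indices mod $k$, so $\sum_{i=1}^{k} d_i = n$). The key observation is that, after deleting $N[D]$, the arc between two consecutive chosen vertices originally at distance $d_i$ contributes exactly $\max\{0, d_i - 3\}$ consecutive vertices to the remaining graph. Hence $D$ is a 1-isolating set if and only if every $d_i \leq 5$, and the problem reduces to realising, for each admissible $n$, a gap sequence of length $k \leq n/4$ with entries in $\{3,4,5\}$ summing to $n$.

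Writing $n = 4q + r$ with $r \in \{0,1,2,3\}$, the natural choice is $k = q$ with $r$ gaps of length $5$ and $q - r$ gaps of length $4$, yielding $|D| = q = \lfloor n/4 \rfloor \leq n/4$. This is feasible precisely when $q \geq r$, which fails exactly when $(q,r) \in \{(0,3),(1,2),(1,3),(2,3)\}$, i.e., $n \in \{3,6,7,11\}$. The remaining boundary cases $C_4$ and $C_5$ are handled directly by taking $|D|=1$.

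The only real obstacle is careful bookkeeping for the residues $r \in \{2,3\}$ and verifying the small cases $C_4$ and $C_5$; reassuringly, the four exceptional cycles $C_3, C_6, C_7, C_{11}$ in the statement fall out automatically from the feasibility condition $q \geq r$, so the construction matches the stated list of exceptions exactly.
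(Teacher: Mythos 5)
Your proposal is correct, but note that the paper does not prove Lemma~\ref{Lemma2.1} at all: it is imported verbatim from Caro and Hansberg \cite{Caro2017}, so there is no in-paper argument to compare against. What you supply is a self-contained verification, and both halves check out. Part (i) is indeed nothing more than Theorem~\ref{Theorem1.1}(ii) specialized to $k=1$, since $K_{1,2}=P_3$ and $n/(k+3)=n/4$. For part (ii), your gap analysis is right: the arc strictly between consecutive chosen vertices at cyclic distance $d_i$ loses its two endpoints to $N[D]$ and leaves a path on $\max\{0,d_i-3\}$ vertices, so $D$ is $1$-isolating exactly when every $d_i\le 5$; this in effect recovers $\iota_1(C_n)=\lceil n/5\rceil$, and your choice of $r$ gaps of length $5$ and $q-r$ of length $4$ (with $n=4q+r$) gives $|D|=\lfloor n/4\rfloor$ whenever $q\ge r$, failing for $n\ge 3$ precisely at $n\in\{3,6,7,11\}$ — matching the stated exceptions. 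Two trivial bookkeeping remarks: $C_4$ and $C_5$ are already covered by the general scheme (a single vertex with the single wrap-around gap $d_1=n\le 5$), so no separate treatment is needed; and only the ``if'' direction of your equivalence is used, so you could state it one-sidedly. Neither affects correctness.
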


Let $X,Y \subseteq V(G)$ be two disjoint vertex subsets of a graph $G$, and let $E(X,Y)$ be the set of edges of $G$ with one end in $X$ and the other end in $Y$. 

\begin{lemma}(Zhang and Wu \cite{Zhang2021}).\label{Lemma2.2}
    Let $G=(V,E)$ be a graph. For any $S \subseteq V(G)$, if $G[S]$ has a 1-isolating set $D$ such that $E(S \setminus N[D],V \setminus S)= \emptyset$, then $$\iota_1(G) \leq |D|+\iota_1(G-S).$$
\end{lemma}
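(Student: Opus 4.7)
The plan is to combine the given 1-isolating set $D$ of $G[S]$ with a minimum 1-isolating set $D'$ of $G - S$ and show that $D \cup D'$ is a 1-isolating set of $G$. Since $D \subseteq S$ and $D' \subseteq V \setminus S$ are disjoint, one then has $|D \cup D'| = |D| + |D'| = |D| + \iota_1(G - S)$, which gives the stated inequality.

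To verify that $D \cup D'$ is a 1-isolating set of $G$, I would fix an arbitrary vertex $v \in V \setminus N_G[D \cup D']$ and bound its degree in $G - N_G[D \cup D']$ by splitting on whether $v$ lies in $S$ or in $V \setminus S$. If $v \in S$, then $v \in S \setminus N_G[D]$, and the hypothesis $E(S \setminus N[D], V \setminus S) = \emptyset$ forces every neighbor of $v$ in $G$ to lie in $S$, hence in $S \setminus N_{G[S]}[D]$; since $D$ is a 1-isolating set of $G[S]$, at most one such neighbor exists. If instead $v \in V \setminus S$, then $v \notin N_G[D']$, and any neighbor of $v$ sitting in $S$ and still surviving in $G - N_G[D \cup D']$ would lie in $S \setminus N_G[D]$, so the edge between it and $v$ would contradict the hypothesis $E(S \setminus N[D], V \setminus S) = \emptyset$. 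Therefore all neighbors of $v$ in $G - N_G[D \cup D']$ lie in $(V \setminus S) \setminus N_{G-S}[D']$, and at most one such exists because $D'$ is a 1-isolating set of $G - S$.

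The conceptual engine is the separation hypothesis $E(S \setminus N[D], V \setminus S) = \emptyset$, which forbids any crossing edge incident to a vertex of $S$ not dominated by $D$. This decouples the 1-isolation analyses of $G[S]$ and $G - S$ so that the two isolating sets glue together without interference. There is no substantive obstacle; the only care needed is to observe that, for $v \in V \setminus S$, possible neighbors of $v$ in $S$ are handled automatically by the hypothesis without adding anything to the isolating set.
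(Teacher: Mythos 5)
Your proof is correct: taking a minimum 1-isolating set $D'$ of $G-S$, the case split on $v\in S$ versus $v\in V\setminus S$, with the hypothesis $E(S\setminus N[D],V\setminus S)=\emptyset$ blocking all crossing edges in both directions, correctly shows $\Delta(G-N[D\cup D'])\leq 1$. The paper itself only cites this lemma from Zhang and Wu without reproving it, and your argument is exactly the standard one for this kind of gluing statement.
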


\begin{lemma}(Zhang and Wu \cite{Zhang2021}).\label{Lemma2.3}
    Let $\mathcal{H}(G)$ be the set of connected components of $G$. Then $$\iota_1(G)= \sum_{H\in \mathcal{H}(G)}\iota_1(H).$$
\end{lemma}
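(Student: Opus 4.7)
The plan is to prove the equality by establishing both inequalities separately, exploiting the crucial fact that there are no edges between distinct connected components of $G$, so neighborhoods and induced subgraphs behave componentwise.

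For the upper bound $\iota_1(G) \leq \sum_{H \in \mathcal{H}(G)} \iota_1(H)$, I would choose, for each component $H$, a minimum 1-isolating set $D_H \subseteq V(H)$, and let $D = \bigcup_{H \in \mathcal{H}(G)} D_H$. Since the components are pairwise vertex-disjoint and have no edges between them, we have $N_G[D] = \bigsqcup_{H} N_H[D_H]$. Consequently $G - N_G[D]$ is the disjoint union of the subgraphs $H - N_H[D_H]$, each of which satisfies $\Delta \leq 1$ by the choice of $D_H$. Therefore $\Delta(G - N_G[D]) \leq 1$, so $D$ is a 1-isolating set of $G$ with $|D| = \sum_H \iota_1(H)$.

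For the lower bound $\iota_1(G) \geq \sum_{H \in \mathcal{H}(G)} \iota_1(H)$, I would take a minimum 1-isolating set $D$ of $G$ and set $D_H = D \cap V(H)$ for each component $H$. The same componentwise decomposition gives $H - N_H[D_H]$ as an induced subgraph of $G - N_G[D]$, hence $\Delta(H - N_H[D_H]) \leq \Delta(G - N_G[D]) \leq 1$, so each $D_H$ is a 1-isolating set of $H$. Summing, $\iota_1(G) = |D| = \sum_H |D_H| \geq \sum_H \iota_1(H)$.

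There is essentially no obstacle here; the entire content is the observation that for disjoint vertex sets lying in different components, the closed neighborhood operator commutes with taking components. Once both inequalities are established, the equality follows immediately. I would present the argument in a single short paragraph, emphasizing the componentwise decomposition of $N_G[D]$ as the one step that needs to be noted explicitly.
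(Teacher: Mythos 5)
Your argument is correct: both inequalities follow from the observation that closed neighborhoods decompose componentwise, so $G-N_G[D]$ is the disjoint union of the graphs $H-N_H[D\cap V(H)]$, and the maximum-degree condition is checked component by component. The paper itself gives no proof of Lemma \ref{Lemma2.3} (it is quoted from Zhang and Wu \cite{Zhang2021}), and your write-up is exactly the standard argument one would supply.
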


For convenience, we define the set of graphs $\mathcal{S}=\{P_3,C_3,C_7,C_{11}\}$. A graph is called an {\it $\mathcal{S}$-graph} if it is isomorphic to a member of $\mathcal{S}$. 

\section {\large Proof of Theorem \ref{Theorem1.6}}

%\begin{theorembis}{Theorem1.6}
%    If $G \notin \{P_3,C_3,C_7,C_{11}\}$ is a connected graph of order $n$ without $6$-cycles, then $\iota_1(G) \leq \frac{n}{4}$.
%\end{theorembis}

In this section, we prove Theorem \ref{Theorem1.6}. Let $G=(V,E)$ be a connected graph of order $n$ without 6-cycles. Suppose that $G$ is not an $\mathcal{S}$-graph, that is, $G \notin \{P_3,C_3,C_7,C_{11}\}$. The proof is by induction on $n$. If $G \notin \{P_3,C_3\}$ has order $n \leq 3$, then $G \in \{K_1,K_2\}$, and then $\iota_1(G) = 0 < \frac{n}{4}$. Hence, we may assume that $n \geq 4$.

If $\Delta(G) \leq 2$, then $G \in \{P_n,C_n\}$. By Lemma \ref{Lemma2.1}, $\iota_1(G)\leq \frac{n}{4}$. So, let $\Delta(G) \geq 3$. Fix a vertex $v \in V(G)$ with $d(v)=\Delta(G)$. If $\Delta(G)=n-1$, then $\iota_1(G)\leq |\{v\}| = 1 \leq \frac{n}{4}$. Hence, we may assume that $3 \leq \Delta(G) \leq n-2$.

We further consider $G'=G-N[v]$. Let $|V(G')|=n'$. Since $\Delta(G)\leq n-2$, $n' \geq 1$. Let $\mathcal{H} \neq \emptyset$ be the set of components of $G'$,  $\mathcal{H}_b$ be the set of components of $G'$ isomorphic to an $\mathcal{S}$-graph, and $\mathcal{H}_g=\mathcal{H} \setminus \mathcal{H}_b$. By the induction hypothesis, $\iota_1(H) \leq \frac{1}{4}|V(H)|$ for any component $H \in \mathcal{H}_g$.

\vspace{3mm}

\begin{claim} \label{ClaimA}
    $\mathcal{H}_b \neq \emptyset$. 
\end{claim}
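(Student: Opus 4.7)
The plan is to prove Claim \ref{ClaimA} in contrapositive form: if $\mathcal{H}_b = \emptyset$, then $\iota_1(G) \leq \frac{n}{4}$ directly, completing the inductive step. Equivalently, if $G$ is a (minimal) counterexample with $\iota_1(G) > \frac{n}{4}$, then some component of $G'$ must be an $\mathcal{S}$-graph. The key intuition is that a single vertex $v$ of maximum degree $\Delta(G) \geq 3$ dominates at least four vertices ``for free,'' and the leftover graph $G'$ can be handled componentwise by induction provided no component is a troublesome $\mathcal{S}$-graph.

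First I would apply the induction hypothesis to every component $H \in \mathcal{H}$. Each such $H$ is connected, is a subgraph of $G$ (hence has no $6$-cycle), and has $|V(H)| \leq n' \leq n - 1 - \Delta(G) < n$. Under the contrapositive assumption $\mathcal{H}_b = \emptyset$, we have $H \notin \mathcal{S}$, so induction yields $\iota_1(H) \leq \tfrac{1}{4}|V(H)|$. For the smallest cases $|V(H)| \leq 2$ (i.e.\ $H \in \{K_1, K_2\}$) this holds trivially since $\iota_1(H) = 0$. Summing over components via Lemma \ref{Lemma2.3} gives
\[
\iota_1(G') \;=\; \sum_{H \in \mathcal{H}} \iota_1(H) \;\leq\; \frac{n'}{4}.
\]

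Next I would invoke Lemma \ref{Lemma2.2} with $S = N[v]$ and $D = \{v\}$. Certainly $\{v\}$ is a $1$-isolating set of $G[S]$ (in fact a dominating set), so $S \setminus N[\{v\}] = \emptyset$ and the edge condition $E(S \setminus N[\{v\}],\, V \setminus S) = \emptyset$ is vacuous. The lemma then yields
\[
\iota_1(G) \;\leq\; 1 + \iota_1(G - S) \;=\; 1 + \iota_1(G') \;\leq\; 1 + \frac{n'}{4}.
\]
Finally, substituting $n' = n - 1 - \Delta(G)$ together with $\Delta(G) \geq 3$ gives
\[
\iota_1(G) \;\leq\; 1 + \frac{n - 1 - \Delta(G)}{4} \;\leq\; 1 + \frac{n-4}{4} \;=\; \frac{n}{4},
\]
completing the induction in this case and therefore establishing the claim.

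There is essentially no obstacle here: once one sets up the right reduction, the claim is a direct consequence of Lemmas \ref{Lemma2.2} and \ref{Lemma2.3} combined with the induction hypothesis, using only the fact that $\Delta(G) \geq 3$ (which was already secured before the claim was stated). The real difficulty in the proof of Theorem \ref{Theorem1.6} will come in the subsequent claims, where one must carefully analyze the structure around the $\mathcal{S}$-graph components in $\mathcal{H}_b$ -- in particular ruling out problematic attachments created by the absence of $6$-cycles -- in order to still recover the $\frac{n}{4}$ bound.
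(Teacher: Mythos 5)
Your proof is correct and takes essentially the same route as the paper: assume $\mathcal{H}_b=\emptyset$, note that $\{v\}$ is a $1$-isolating set of $G[N[v]]$ with the edge condition of Lemma \ref{Lemma2.2} vacuous, apply Lemma \ref{Lemma2.3} and the induction hypothesis componentwise to $G'$, and conclude $\iota_1(G)\leq 1+\frac{1}{4}(n-\Delta(G)-1)\leq \frac{n}{4}$ using $\Delta(G)\geq 3$. The only cosmetic difference is that you phrase it as a contrapositive while the paper phrases it as a proof by contradiction; the content is identical.
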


\begin{proof}
Suppose to the contrary that $\mathcal{H}_b = \emptyset$. Then, $\mathcal{H}=\mathcal{H}_g \neq \emptyset$. It is easy to see that the set $\{v\}$ is a 1-isolating set of $G[N[v]]$. By Lemmas \ref{Lemma2.2} and \ref{Lemma2.3}, and by the induction hypothesis, we have $\iota_1(G) \leq |\{v\}| + \iota_1(G') =1+\sum_{H \in \mathcal{H}} \iota_1(H) \leq 1+\frac{1}{4}(n-\Delta(G)-1) \leq \frac{n}{4}$. The result follows.
\end{proof}

For each $H \in \mathcal{H}$, we denote that $N(H)=N(V(H))$ simply. For any $x \in N(v)$, let $\mathcal{H}_b^x$ be the set of components $H$ of $\mathcal{H}_b$ with $N(H) = \{x\}$, and $\mathcal{H}_g^x$ be the set of components $H$ of $\mathcal{H}_g$ with $N(H) = \{x\}$.

\vspace{3mm}

{\bf Case 1.} For some $x \in N(v)$, $\mathcal{H}_b^x \neq \emptyset$.

\vspace{3mm}

Let $k_3$ be the number of components isomorphic to $P_3$ or $C_3$ in $\mathcal{H}_b^x$, and $k_i$ be the number of components isomorphic to $C_i$ in $\mathcal{H}_b^x$, where $i \in \{7,11\}$. By the present assumption, $k_3+k_7+k_{11} \geq 1$. Let $X = \{x\}\cup \bigcup_{H \in \mathcal{H}_b^x}V(H)$. Then, $G-X= G_v \cup \bigcup_{H \in \mathcal{H}_g^x}H$, where $G_v$ is the component of $G-X$ containing $v$.
    
For each $H \in \mathcal{H}_b^x$, let $xy \in E(G)$ for some $y \in V(H)$, and if $H \in \{C_7, C_{11}\}$, then let $y_3$, $y_3'$ be the two vertices of $H$ which are distance 3 from $y$ on the cycle. Take
\begin{displaymath}
    D_{H} = \left\{
        \begin{array}{ll}
        \{x\},& \text{if}\ H \in \{P_3, C_3\},\\
        \{x, y_3\},& \text{if}\  H \cong C_7,\\
        \{x, y_3,y_3'\},& \text{if}\  H \cong C_{11}.\\
        \end{array} \right.
\end{displaymath}
As shown in Fig. \hyperlink{Fig2}{2}, $D_X=\bigcup_{H \in \mathcal{H}_b^x} D_{H}$ is a 1-isolating set of $G[X]$. Clearly, $$|D_X|= | \bigcup_{H \in \mathcal{H}_b^x} D_{H} | = \sum_{H \in \mathcal{H}_b^x} | D_{H} \setminus \{x\} |+|\{x\}|= 1+k_7+2k_{11}.$$ 

\begin{figure}[h!]
    \begin{center}
    \begin{tikzpicture}[scale=.48]
    \tikzstyle{vertex}=[circle, draw, inner sep=0pt, minimum size=6pt]
    \tikzset{vertexStyle/.append style={rectangle}}
            \vertex (1) at (0,0) [scale=.75,fill=black] {};
            \node ($x$) at (0.5,-0.5) {$x$};
            \vertex (2) at (0,-2) [scale=.75,fill=lightgray] {};
            %\node ($y$) at (0.9,-1.9) {$y$};
            \vertex (3) at (-1,-2.8) [scale=.75] {};
            \vertex (4) at (-1,-3.6) [scale=.75,fill=lightgray] {};
            \vertex (5) at (-0.5,-4.4) [scale=.75,fill=black] {};
            %\node ($y_{3}$) at (-1.1,-4.9) {$y_{3}$};
            \vertex (6) at (0.5,-4.4) [scale=.75,fill=lightgray] {};
            \vertex (7) at (1,-3.6) [scale=.75] {};
            \vertex (8) at (1,-2.8) [scale=.75] {};

            \path
            (1) edge (2)
            (2) edge (3)
            (3) edge (4)
            (4) edge (5)
            (5) edge (6)
            (6) edge (7)
            (7) edge (8)
            (2) edge (8)
            ;

            \vertex (12) at (-2,0) [scale=.75,fill=lightgray] {};
            \node ($y$) at (-1.5,-0.5) {$y$};
            \vertex (13) at (-2.6,1) [scale=.75] {};
            \vertex (14) at (-3.6,1) [scale=.75,fill=lightgray] {};
            \vertex (15) at (-4.6,1) [scale=.75,fill=black] {};
            \node ($y_{3}$) at (-4.6,1.7) {$y_{3}$};
            \vertex (16) at (-5.6,1) [scale=.75,fill=lightgray] {};
            \vertex (17) at (-6.6,0.5) [scale=.75] {};
            \vertex (18) at (-6.6,-0.5) [scale=.75] {};
            \vertex (19) at (-5.6,-1) [scale=.75,fill=lightgray] {};
            \vertex (20) at (-4.6,-1) [scale=.75,fill=black] {};
            \node ($y_{3}'$) at (-4.6,-1.8) {$y_{3}'$};
            \vertex (21) at (-3.6,-1) [scale=.75,fill=lightgray] {};
            \vertex (22) at (-2.6,-1) [scale=.75] {};

            \path
            (1) edge (12)
            (12) edge (13)
            (13) edge (14)
            (14) edge (15)
            (15) edge (16)
            (16) edge (17)
            (17) edge (18)
            (18) edge (19)
            (19) edge (20)
            (20) edge (21)
            (21) edge (22)
            (22) edge (12)
            ;

            \vertex (9) at (-1.7,-1.7) [scale=.75,fill=lightgray] {};
            \vertex (10) at (-2.6,-2.6) [scale=.75] {};
            \vertex (11) at (-3.5,-3.5) [scale=.75] {};

            \path
            (1) edge (9)
            (9) edge (10)
            (10) edge (11)
            ;

            \vertex (23) at (-1.5,1.7) [scale=.75,fill=lightgray] {};
            \vertex (24) at (-2.9,2) [scale=.75] {};
            \vertex (25) at (-1.8,3) [scale=.75] {};

            \path
            (1) edge (23)
            (23) edge (24)
            (23) edge (25)
            ;

            \vertex (26) at (0,2) [scale=.75,fill=lightgray] {};
            \vertex (27) at (-0.7,3) [scale=.75] {};
            \vertex (28) at (0.7,3) [scale=.75] {};

            \path
            (1) edge (26)
            (26) edge (27)
            (27) edge (28)
            (26) edge (28)
            ;

            \vertex (29) at (2,0) [scale=.75,fill=lightgray] {};
            \node ($v$) at (1.7,-0.5) {$v$};
            \vertex (30) at (4.5,3) [scale=.75] {};
            \vertex (31) at (4.5,1.5) [scale=.75] {};
            \vertex (32) at (4.5,0) [scale=.75] {};
            \vertex (33) at (4.5,-0.85) [scale=.5,fill=black] {};
            \vertex (34) at (4.5,-1.5) [scale=.5,fill=black] {};
            \vertex (35) at (4.5,-2.15) [scale=.5,fill=black] {};
            \vertex (36) at (4.5,-3) [scale=.75] {};
            
            \path
            (29) edge (30)
            (29) edge (31)
            (29) edge (32)
            (29) edge (36)
            (1)  edge (29)
            ;

            \vertex (37) at (1.48,1.58) [scale=.01,fill=black] {};
            \vertex (38) at (5.75,3) [scale=.02] {};
            \vertex (39) at (5.75,1.5) [scale=.02] {};
            \vertex (40) at (5.75,0) [scale=.02] {};
            \vertex (41) at (5.75,-3) [scale=.02] {};

            \path
            (30) edge (38) [dashed]
            (31) edge (39)
            (32) edge (40)
            (36) edge (41)
            (1)  edge (37)
            ;
        \end{tikzpicture}
    \end{center}
    \vspace{1.5mm}
    \par {\footnotesize \centerline{{\bf Fig. 2.} ~The case that $\mathcal{H}_b^x \neq \emptyset$.\hypertarget{Fig2}}}
    \end{figure}

It is easy to see that each component of $\mathcal{H}_g^x$ is not an $\mathcal{S}$-graph. We distinguish the following into two subcases.

    \emph{Subcase 1.1.} $G_v$ is not an $\mathcal{S}$-graph. Since $E(X\setminus N[D_X],V \setminus X)=\emptyset$, we have $\iota_1(G) \leq |D_X|+\iota_1(G-X)$ by Lemma \ref{Lemma2.2}. Note that each component of $G-X$ is not an $\mathcal{S}$-graph and contains no 6-cycles. By Lemma \ref{Lemma2.3} and by the induction hypothesis, we have
    $$\begin{aligned}
        \iota_1(G) &\leq |D_X|+\iota_1(G-X) \leq (1+k_7+2k_{11}) + \frac{1}{4}|V(G-X)|\\
                   &= (1+k_7+2k_{11})+\frac{1}{4}(n-1-3k_3-7k_7-11k_{11})\\
                   &= \frac{n}{4}+\frac{3}{4} (1-(k_3+k_7+k_{11})) \leq \frac{n}{4}.
    \end{aligned}$$

    \emph{Subcase 1.2.} $G_v$ is an $\mathcal{S}$-graph. Let $Y=X \cup V(G_v)$. Then, $G-Y= \bigcup_{H \in \mathcal{H}_g^x} H$. If $G_v \in \{C_7, C_{11}\}$, then let $v_3$, $v_3'$ be the two vertices of $G_v$ which are distance 3 from $v$ on the cycle.

    \emph{Subcase 1.2.1.} $G_v \in \{P_3, C_3\}$. Recall that $N(H) = \{x\}$ for each $H \in \mathcal{H}_b^x$. Clearly, $D_X$ is also a 1-isolating set of $G[Y]$. Since  $E(Y \setminus N[D_X], V \setminus Y) = \emptyset$, $\iota_1(G) \leq |D_X|+\iota_1(G-Y)$ by Lemma \ref{Lemma2.2}. Note that each component of $G-Y$ is not an $\mathcal{S}$-graph and contains no 6-cycles. By Lemma \ref{Lemma2.3} and by the induction hypothesis, we have
    $$\begin{aligned}
        \iota_1(G) &\leq |D_X|+\iota_1(G-Y) = (1+k_7+2k_{11})+ \sum_{H \in \mathcal{H}_g^x} \iota_1(H)\\
                   &\leq (1+k_7+2k_{11})+\frac{1}{4} (n-1-3(k_3+1)-7k_7-11k_{11})\\
                   &= \frac{n}{4}-\frac{3}{4}(k_3+k_7+k_{11}) < \frac{n}{4}.
    \end{aligned}$$

    \emph{Subcase 1.2.2.} $G_v \cong C_7$. 
    Clearly, $\{v_3\} \cup D_X$ is a 1-isolating set of $G[Y]$. Hence, by Lemmas \ref{Lemma2.2} and \ref{Lemma2.3}, and by the induction hypothesis, we have
    $$\begin{aligned}
        \iota_1(G)& \leq |\{v_3\} \cup D_X|+\iota_1(G-Y) = (2+k_7+2k_{11})+ \sum_{H \in \mathcal{H}_g^x} \iota_1(H)\\
                  & \leq (2+k_7+2k_{11})+\frac{1}{4} (n-1-3k_3-7(k_7+1)-11k_{11})\\
                  &=\frac{n}{4}-\frac{3}{4}(k_3+k_7+k_{11}) < \frac{n}{4}.
    \end{aligned}$$

    \emph{Subcase 1.2.3.} $G_v \cong C_{11}$. 
    Clearly, $\{v_3, v_3'\} \cup D_X$ is a 1-isolating set of $G[Y]$. Hence, by Lemmas \ref{Lemma2.2} and \ref{Lemma2.3}, and by the induction hypothesis, we have
    $$\begin{aligned}
        \iota_1(G)& \leq |\{v_3, v_3'\} \cup D_X|+\iota_1(G-Y) = (3+k_7+2k_{11})+ \sum_{H \in \mathcal{H}_g^x} \iota_1(H)\\
                  & \leq (3+k_7+2k_{11})+\frac{1}{4} (n-1-3k_3-7k_7-11(k_{11}+1))\\
                  &=\frac{n}{4}-\frac{3}{4}(k_3+k_7+k_{11}) < \frac{n}{4}.
    \end{aligned}$$

\vspace{3mm}

{\bf Case 2.} For any $x \in N(v)$, $\mathcal{H}_b^x = \emptyset$.

\vspace{3mm}

Now we fix a vertex $x\in N(v)$ with the property that there exists some $H^* \in \mathcal{H}_b$ with $x \in N(H^*)$. Let $X=V(H^*)  \cup  \{x\}$.  Then $G-X= G_v\cup \bigcup_{H \in \mathcal{H}_g^x}H$, where $G_v$ is the component of $G-X$ containing $v$. Clearly, $N(v) \setminus \{x\} \subseteq V(G_v)$.

\emph{Subcase 2.1.} $G_v$ is not an $\mathcal{S}$-graph. Let $xy \in E(G)$ for some $y \in V(H^*)$, and if $H^* \in \{C_7,C_{11}\}$, then let $y_{d}, y_{d}'$ be the two vertices of $H^*$ which are distance $d$ from $y$ on the cycle. If $H^* \cong P_3$ and $d_{H^*}(y) = 1$, then let $y_d$ be the vertex of $H^*$ with distance $d$ from $y$. If $x'y_{2} \in E(G)$ or $x'y_{2}' \in E(G)$ for some $x' \in N(v) \setminus \{x\}$, then $yy_{1}y_{2}x'vxy$ or $yy_{1}'y_{2}'x'vxy$ is a 6-cycle in $G$, a contradiction. So, $x'y_{2} \notin E(G)$ and $x'y_{2}' \notin E(G)$. Take
\begin{displaymath}
    D_X = \left\{
    \begin{array}{ll}
    \{y\},& \text{if}\ H^* \in \{P_3, C_3\},\\
   \{y, y_3\},& \text{if}\ H^* \cong C_{7},\\
   \{y, y_{4}, y_{4}'\},& \text{if}\ H^* \cong C_{11}.\\
    \end{array} \right.
\end{displaymath}
Clearly, $D_X$ is a 1-isolating set of $G[X]$, as shown in Fig. \hyperlink{Fig3}{3}. Since $E(X\setminus N[D_X],V \setminus X) =\emptyset$, then $\iota_1(G) \leq |D_X|+\iota_1(G-X)$ by Lemma \ref{Lemma2.2}. Note that each component of $G-X$ is not an $\mathcal{S}$-graph and contains no 6-cycles. By Lemma \ref{Lemma2.3} and by the induction hypothesis, we have
$$\begin{aligned}
    \iota_1(G)& \leq |D_X|+\iota_1(G-X) = \frac{1}{4} (|V(H^*)|+1)+ \iota_1(G_v)+\sum_{H \in \mathcal{H}_g^x} \iota_1(H)\\
              & \leq \frac{1}{4} (|V(H^*)|+1)+\frac{1}{4} (n-|V(H^*)|-1) \leq \frac{n}{4}.              
 \end{aligned}$$

\begin{figure}[h!]
    \begin{center}
    \begin{tikzpicture}[scale=.48]
    \tikzstyle{vertex}=[circle, draw, inner sep=0pt, minimum size=6pt]
    \tikzset{vertexStyle/.append style={rectangle}}
        \vertex (1) at (0,0) [scale=.75,fill=lightgray] {};
        \node ($x$) at (0.7,-0.5) {$x$};
        \vertex (2) at (0,-2) [scale=.75,fill=black] {};
        \vertex (3) at (-1,-2.8) [scale=.75,fill=lightgray] {};
        \vertex (4) at (-1,-3.6) [scale=.75,fill=lightgray] {};
        \vertex (5) at (-0.5,-4.4) [scale=.75,fill=black] {};
        \vertex (6) at (0.5,-4.4) [scale=.75,fill=lightgray] {};
        \vertex (7) at (1,-3.6) [scale=.75] {};
        \vertex (8) at (1,-2.8) [scale=.75,fill=lightgray] {};

        \path
        (1) edge (2)
        (2) edge (3)
        (3) edge (4)
        (4) edge (5)
        (5) edge (6)
        (6) edge (7)
        (7) edge (8)
        (2) edge (8)
        ;
       
        \vertex (12) at (-2,0) [scale=.75,fill=black] {};
        \node ($y$) at (-1.6,-0.5) {$y$};
        \vertex (13) at (-2.6,1) [scale=.75,fill=lightgray] {};
        \vertex (14) at (-3.6,1) [scale=.75] {};
        \vertex (15) at (-4.6,1) [scale=.75,fill=lightgray] {};
        \vertex (16) at (-5.6,1) [scale=.75,fill=black] {};
        \node ($y_4$) at (-5.8,1.7) {$y_4$};
        \vertex (17) at (-6.6,0.5) [scale=.75,fill=lightgray] {};
        \vertex (18) at (-6.6,-0.5) [scale=.75,fill=lightgray] {};
        \vertex (19) at (-5.6,-1) [scale=.75,fill=black] {};
        \node ($y_{4}'$) at (-5.8,-1.8) {$y_{4}'$};
        \vertex (20) at (-4.6,-1) [scale=.75,fill=lightgray] {};
        \vertex (21) at (-3.6,-1) [scale=.75] {};
        \vertex (22) at (-2.6,-1) [scale=.75,fill=lightgray] {};

        \path
        (1) edge (12)
        (12) edge (13)
        (13) edge (14)
        (14) edge (15)
        (15) edge (16)
        (16) edge (17)
        (17) edge (18)
        (18) edge (19)
        (19) edge (20)
        (20) edge (21)
        (21) edge (22)
        (22) edge (12)
        ;

        \vertex (9) at (-1.7,-1.7) [scale=.75,fill=black] {};
        \vertex (10) at (-2.6,-2.6) [scale=.75,fill=lightgray] {};
        \vertex (11) at (-3.5,-3.5) [scale=.75] {};

        \path
        (1) edge (9)
        (9) edge (10)
        (10) edge (11)
        ;

        \vertex (23) at (-1.5,1.7) [scale=.75,fill=black] {};
        \vertex (24) at (-2.9,2) [scale=.75,fill=lightgray] {};
        \vertex (25) at (-1.8,3) [scale=.75,fill=lightgray] {};

        \path
        (1) edge (23)
        (23) edge (24)
        (23) edge (25)
        ;

        \vertex (26) at (0,2) [scale=.75,fill=black] {};
        \vertex (27) at (-0.7,3) [scale=.75,fill=lightgray] {};
        \vertex (28) at (0.7,3) [scale=.75,fill=lightgray] {};

        \path
        (1) edge (26)
        (26) edge (27)
        (27) edge (28)
        (26) edge (28)
        ;

        \vertex (29) at (2,0) [scale=.75] {};
        \node ($v$) at (1.7,-0.5) {$v$};
        \vertex (30) at (4.5,3) [scale=.75] {};
        \vertex (31) at (4.5,1.5) [scale=.75] {};
        \vertex (32) at (4.5,0) [scale=.75] {};
        \vertex (33) at (4.5,-0.85) [scale=.5,fill=black] {};
        \vertex (34) at (4.5,-1.5) [scale=.5,fill=black] {};
        \vertex (35) at (4.5,-2.15) [scale=.5,fill=black] {};
        \vertex (36) at (4.5,-3) [scale=.75] {};
        
        \path
        (29) edge (30)
        (29) edge (31)
        (29) edge (32)
        (29) edge (36)
        (1)  edge (29)
        ;

        \vertex (37) at (0.7,-1.55) [scale=.01,fill=black] {};
        \vertex (38) at (5.75,3) [scale=.02] {};
        \vertex (39) at (5.75,1.5) [scale=.02] {};
        \vertex (40) at (5.75,0) [scale=.02] {};
        \vertex (41) at (5.75,-3) [scale=.02] {};

        \path
        (30) edge (38) [dashed]
        (31) edge (39)
        (32) edge (40)
        (36) edge (41)
        (1)  edge (37)
        (8) edge (36)
        (26) edge (30)
        ;

        \draw (-1.7,3.1) parabola bend (1.1,3.9) (4.3,3) [dashed];
        \draw (-5.5,1.1) parabola bend (-0.3,4.6) (4.3,3.15) [dashed];
        \draw (-2.5,-2.8) parabola bend (0.2,-5.4) (4.4,-3.2) [dashed];
    \end{tikzpicture}
\end{center}
\vspace{1.5mm}
\par {\footnotesize \centerline{{\bf Fig. 3.} ~The 1-isolating set $D_X$ of $G[X]$. \hypertarget{Fig3}}}
\end{figure}

\emph{Subcase 2.2.}  $G_v$ is an $\mathcal{S}$-graph. It follows that $H^*$ is the only component of $\mathcal{H}_b$ with $x \in N(H^*)$. Since $d_{G_v}(v)=2$, $\Delta(G)=d_G(v)=d_{G_v}(v)+|\{x\}|=3$. Let $Y=X \cup V(G_v)$. Then $G-Y = \bigcup_{H \in \mathcal{H}_g^x}H$.

\vspace{3mm}

\begin{claim} \label{ClaimB}
     $\mathcal{H}_g^x = \emptyset$. 
\end{claim}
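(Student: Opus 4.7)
The plan is to argue by contradiction: assume that some $H\in\mathcal{H}_g^x$ exists and deduce that $\iota_1(G)\le n/4$ directly, which completes the proof of the theorem in this sub-sub-case and hence justifies assuming $\mathcal{H}_g^x=\emptyset$ in the remainder of Subcase 2.2. First I would pin down the local structure around $x$: since $\Delta(G)=3$, and $x$ is forced to be adjacent to $v$, to some $y\in V(H^*)$ (because $x\in N(H^*)$), and to some $z\in V(H)$ (because $N(H)=\{x\}$), one has $N_G(x)=\{v,y,z\}$ exactly, so $H$ is the unique member of $\mathcal{H}_g^x$ and $xz$ is the only edge joining $V(H)$ to $S:=V(G)\setminus V(H)$.

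Next I would apply the induction hypothesis to the two pieces. The subgraph $G[S]$ is connected (removing a pendant-attached block), has no 6-cycles, and is not an $\mathcal{S}$-graph, because $d_{G[S]}(v)=3$ and $|V(G[S])|\ge |V(G_v)|+|V(H^*)|+1\ge 7$; thus $\iota_1(G[S])\le (n-|V(H)|)/4$. Likewise, $H$ is connected, non-$\mathcal{S}$, and inherits the no-6-cycle property, so $\iota_1(H)\le |V(H)|/4$. These two bounds sum exactly to $n/4$; no slack remains.

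The main obstacle, and the heart of the proof, is to glue the two 1-isolating sets $D_S$ and $D_H$ into a single 1-isolating set of $G$ despite the bridge edge $xz$ that the induction on each piece ignored. The union $D_S\cup D_H$ fails to 1-isolate $G$ precisely when both $x$ and $z$ lie outside $N_G[D_S\cup D_H]$, in which case the edge $xz$ raises their residual degrees. I would therefore show that a minimum 1-isolating set $D_S$ of $G[S]$ can always be chosen with $x\in N_{G[S]}[D_S]$---equivalently, $D_S\cap\{x,v,y\}\neq\emptyset$---since $x$ has degree only $2$ in $G[S]$ and any minimum 1-isolating set that happens to miss $\{x,v,y\}$ can be modified by a local swap (replacing a single vertex by $x$, $v$, or $y$) without sacrificing cardinality or the 1-isolation property. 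Granting this, Lemma~\ref{Lemma2.2} with $S$ yields $\iota_1(G)\le |D_S|+\iota_1(H)\le n/4$, completing the argument. Verifying the swap step across all possible configurations of $G[S]$ is the most delicate part.
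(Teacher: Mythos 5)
Your setup is right, and you have put your finger on exactly the hard point: both induction bounds are tight, so the bridge $xz$ must be absorbed for free, which forces $x\in N[D_S]$. But the resolution you propose --- that a minimum $1$-isolating set of $G[S]$ can always be swapped into one of the same size that covers $x$ --- is false, so the argument does not close. Here is a concrete instance of the Claim~\ref{ClaimB} configuration where it fails. Let $G_v$ be the path $v_1vv_1'$, let $H^*$ be the path $y_1yy_1'$, let $x$ be adjacent to $v$ and $y$, let $v_1$ be adjacent to both $y_1$ and $y_1'$, and let $H'\cong K_2$ have vertex set $\{z,z'\}$ with $z$ adjacent to $x$. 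One checks that $G$ is connected, has $d(v)=\Delta(G)=3$, contains no $6$-cycle (its cycle space is spanned by one $4$-cycle and one $5$-cycle), and lands exactly in Subcase~2.2 with $\mathcal{H}_b=\{H^*\}$, $N(H^*)=\{x,v_1\}$, $G_v\cong P_3$ and $\mathcal{H}_g^x=\{H'\}$. Here your $S$ equals the paper's $Y$ and has $7$ vertices, so your plan requires a $1$-isolating set $D_S$ of $G[Y]$ with $|D_S|\le\lfloor 7/4\rfloor=1$ and $x\in N[D_S]$; the only candidates are $\{x\}$, $\{v\}$, $\{y\}$, and none works: $G[Y]-N[x]$ retains $v_1$ together with both of its neighbours $y_1,y_1'$, while $G[Y]-N[v]$ and $G[Y]-N[y]$ are the paths $y_1yy_1'$ and $v_1vv_1'$. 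The unique minimum $1$-isolating set of $G[Y]$ is $\{v_1\}$, and $x\notin N[v_1]$; no local swap can repair this, because the object you want simply does not exist.

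The paper escapes by a different accounting. Since $|N(H^*)|\ge 2$, the graph $G[Y]-x$ is connected of order $|V(H^*)|+|V(G_v)|\equiv 2\pmod 4$, so the induction hypothesis actually gives $\iota_1(G[Y]-x)\le\frac14\bigl(|Y|-3\bigr)$, a saving of $\frac12$ over the naive $|Y|/4$. Adding the vertex $x$ itself to such a set yields a $1$-isolating set of $G[Z]$, where $Z=Y\cup\{z\}$, of size at most $\frac14|Z|$ which contains $x$ and hence dominates $z$; the extra vertex $z$ is paid for by the parity saving, and the components of $H'-z$ are then handled by induction (or, if one of them is an $\mathcal{S}$-graph, by restarting the whole argument with $x$ as the chosen maximum-degree vertex, which lands back in Case~1). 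If you wish to keep your decomposition into $G[S]$ and $H$, you would need a comparable source of slack; as written, the swap step is a genuine gap.
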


\begin{proof}
On the contrary, suppose $\mathcal{H}_g^x \neq \emptyset$. Since $\{v,y\} \in N(x)$, $d(x) = \Delta(G)=3$. Let $H'$ be the only component of $\mathcal{H}_g^x$, where $N(H')=\{x\}$. Let $xz \in E(G)$ for some $z \in V(H')$, as shown in Fig. \hyperlink{Fig4}{4}.

\begin{figure}[h!]
    \begin{center}
    \begin{tikzpicture}[scale=.48]
    \tikzstyle{vertex}=[circle, draw, inner sep=0pt, minimum size=6pt]
    \tikzset{vertexStyle/.append style={rectangle}}
        \vertex (1) at (0,0) [scale=.75] {};
        \node ($x$) at (-0.5,-0.5) {$x$};
        \vertex (2) at (0,3) [scale=.75] {};
        \node ($y$) at (-0.5,2.5) {$y$};
        \vertex (3) at (-1,4) [scale=.75] {};
        \vertex (4) at (1,4) [scale=.75] {};
        \vertex (5) at (-1,5) [scale=.05] {};
        \vertex (6) at (1,5) [scale=.05] {};
        
        \path
        (1) edge (2)
        (2) edge (3)
        (4) edge (2)
        ;

        \path
        (5) edge (3)[dashed]
        (6) edge (4)
        ;

        \vertex (7) at (3,0) [scale=.75] {};
        \node ($v$) at (2.6,-0.5) {$v$};
        \vertex (8) at (4,1) [scale=.75] {};
        \vertex (9) at (4,-1) [scale=.75] {};
        \vertex (10) at (5.25,1) [scale=.02] {};
        \vertex (11) at (5.25,-1) [scale=.02] {};

        \path
        (1) edge (7)
        (7) edge (8)
        (7) edge (9)
        ;

        \path
        (8) edge (10)[dashed]
        (9) edge (11)
        (4) edge (8)
        ;
       
        \vertex (12) at (-3,0) [scale=.75] {};
        \node ($z$) at (-2.6,-0.5) {$z$};
        \vertex (13) at (-3.9,0.9) [scale=.02] {};
        \vertex (14) at (-3.9,-0.9) [scale=.02] {};

        \path
        (12) edge (13)[dashed]
        (12) edge (14)
        ;

        \path
        (1) edge (12)
        ;

        \draw[dashed] (-1.6,5.7) rectangle (5.8,-1.7);

        \node ($G[Y]$) at (4.3,4.6) {$G[Y]$};
    \end{tikzpicture}
\end{center}
\vspace{1.5mm}
\par {\footnotesize \centerline{{\bf Fig. 4.} ~For the case that $\mathcal{H}_g^x \neq \emptyset$. \hypertarget{Fig4}}}
\end{figure}

Since $|N(H^*)| \geq 2$, $G[Y]-x=G [V(H^*) \cup V(G_v)]$ is connected. Since $H^*$ and $G_v$ are $\mathcal{S}$-graphs, $|V(H^*)| \in \{3,7,11\}$ and $|V(G_v)| \in \{3,7,11\}$. Clearly, $|V(H^*) \cup V(G_v)|=|V(H^*)|+| V(G_v)| \in \{6,10,14,18,22\}$. That is, $G[Y]-x$ must not be an $\mathcal{S}$-graph. Set $Z = Y \cup \{z\}$. By the induction hypothesis, we have $$\iota_1 (G[Y]-x) \leq \lfloor \frac{1}{4} (|V(H^*)|+|V(G_v)|) \rfloor= \frac{1}{4} (|V(H^*)|+|V(G_v)|-2)=\frac{1}{4}(|Z|-4).$$

Let $D_{Y \setminus \{x\}}$ be a 1-isolating set of $G[Y]-x$ of size $\iota_1 (G[Y]-x)$. Then, $D_{Z}=D_{Y \setminus \{x\}} \cup \{x\}$ is a 1-isolating set of $G[Z]$. Furthermore, $$|D_Z| = |D_{Y \setminus \{x\}} \cup \{x\}| = \iota_1 (G[Y]-x)+|\{x\}| \leq \frac{1}{4}(|Z|-4)+1=\frac{1}{4}|Z|.$$

Since $d(z) \leq \Delta(G)=3$, $G-Z$ has at most two components. If any component of $G-Z$ is not an $\mathcal{S}$-graph, then by the induction hypothesis, we have $\iota_1(G-Z) \leq \frac{1}{4}|V(G-Z)|$. Note that $z \in N[D_{Z}]$, and $E(Z\setminus N[D_{Z}], V\setminus Z) =\emptyset$. Hence, by Lemma \ref{Lemma2.2}, $\iota_1(G)\leq |D_{Z}|+\iota_1(G-Z) \leq \frac{1}{4}(|Z|+|V(G-Z)|)=\frac{n}{4}$. If $G-Z$ has a component isomorphic to an $\mathcal{S}$-graph, then it is easy to see $d(x)=\Delta(G)=3$ and $z \in N(x)$. Clearly, $\mathcal{H}_b^z \neq \emptyset$, and we return to Case 1. This proves Claim \ref{ClaimB}.
\end{proof}

By Claim \ref{ClaimB}, $\mathcal{H}_g^x = \emptyset$. Recall that $X = V(H^*) \cup \{x\}$. Then $G-X = G_v$. Let $xy \in E(G)$ for some $y \in V(H^*)$. If $G_v \cong P_3$, then since $d(v)=\Delta(G)=3$, $d_{G_v}(v)=2$. Let $v_d$, $v_d'$ be the two vertices distance $d$ from $v$ in $G_v$. Let $y_d$, $y_d'$ be the two vertices distance $d$ from $y$ in $H^*$ if $H^* \in \{C_3,C_7,C_{11}\}$.

\vspace{3mm}

\emph{Subcase 2.2.1.} $H^* \cong C_3$.  Since $|N(H^*)| \geq 2$ and $d(y)=\Delta(G)=3$, we may assume that $y_1v_1 \in E(G)$. Then $y_1y_1'yxvv_1y_1$ is a 6-cycle in $G$, a contradiction.

\emph{Subcase 2.2.2.} $H^* \cong P_3$. We need to consider the degree of $y$ in $H^*$. 

(I) $d_{H^*}(y)=2$. Then $d(y)=3$. Let $N_{H^*}(y)=\{y_1,y_1'\}$. We may assume that $y_1v_1 \in E(G)$. However, $y_1yxv_1'vv_1y_1$ is a 6-cycle in $G$ if $xv_1' \in E(G)$, $y_1yy_1'v_1'vv_1y_1$ is a 6-cycle in $G$ if $y_1'v_1' \in E(G)$, and $y_1yy_1'xvv_1y_1$ is a 6-cycle in $G$ if $y_1'x \in E(G)$. So, $xv_1',y_1'v_1',y_1'x \notin E(G)$. We now consider the structure of $G_v$.

(i) $G_v \cong C_3$. It is easy to see that $y_1yxvv_1'v_1y_1$ is a 6-cycle in $G$, a contradiction.

(ii) $G_v \cong P_3$. Since $d(v) = \Delta(G)=3$, $d_{G_v}(v)=2$. Take
\begin{displaymath}
        D = \left\{
        \begin{array}{ll}
        \{y_1\},& \text{if}\ y_1v_1' \in E(G),\\
        \{v_1\},& \text{if}\ y_1'v_1 \in E(G),\\
        \{x\},&  \text{otherwise}.\\
        \end{array} \right.
    \end{displaymath}
    Clearly, $D$ is a 1-isolating set of $G$. Hence, $\iota_1(G) \leq |D|= 1< \frac{7}{4}=\frac{n}{4}$.

(iii) $G_v \cong C_7$. Since $d_G(v_1)=\Delta(G)=3$, $y_1'v_1 \notin E(G)$. Recalling that $y_1'v_1' \notin E(G)$ and $y_1'x \notin E(G)$, we determine $d(y_1')=1$. As shown in Fig. \hyperlink{Fig5}{5}, take
    \begin{displaymath}
        D = \left\{
        \begin{array}{ll}
        \{x,v_3\},& \text{if}\ y_1v_1' \notin E(G),\\
        \{y_1,v_3\},& \text{if}\ y_1v_1' \in E(G), \ xv_2' \notin E(G),\\
        \{v_1, v_2'\},& \text{if}\ y_1v_1' \in E(G), \ xv_2' \in E(G).\\
        \end{array} \right.
    \end{displaymath}
    Clearly, $D$ is a 1-isolating set of $G$. Hence, $\iota_1(G) \leq |D|= 2< \frac{11}{4}=\frac{n}{4}$.

    \begin{figure}[h!]
        \begin{center}
        \begin{tikzpicture}[scale=.48]
        \tikzstyle{vertex}=[circle, draw, inner sep=0pt, minimum size=6pt]
        \tikzset{vertexStyle/.append style={rectangle}}

%Center
        \vertex (1) at (0.5,0) [scale=.55] {};
        \node [scale=.85] ($x$) at (0.5,0.5) {$x$} ;
        \vertex (2) at (2,0) [scale=.55] {};
        \node ($v$) [scale=.85] at (1.93,0.5) {$v$};
        \vertex (3) at (2.8,-1) [scale=.55,fill=lightgray] {};
        \node ($v_1'$) [scale=.85] at (2.8,-1.7) {$v_1'$};
        \vertex (4) at (3.8,-1) [scale=.55] {};
        \node ($v_2'$) [scale=.85] at (4,-1.7) {$v_2'$};
        \vertex (5) at (4.8,-0.5) [scale=.55,fill=lightgray] {};
        \node ($v_3'$) [scale=.85] at (5.3,-1) {$v_3'$};
        \vertex (6) at (4.8,0.5) [scale=.55,fill=black] {};
        \node ($v_3$) [scale=.85] at (5.3,0.8) {$v_3$};
        \vertex (7) at (3.8,1) [scale=.55,fill=lightgray] {};
        \node ($v_2$) [scale=.85] at (4,1.5) {$v_2$};
        \vertex (8) at (2.8,1) [scale=.55,fill=lightgray] {};
        \node ($v_1$) [scale=.85] at (2.8,1.5) {$v_1$};

        \path
        (1) edge (2)
        (2) edge (3)
        (3) edge (4)
        (4) edge (5)
        (5) edge (6)
        (6) edge (7)
        (7) edge (8)
        (8) edge (2)
        ;

        \vertex (9) at (-1,0) [scale=.55,fill=lightgray] {};
        \node ($y$) [scale=.85] at (-0.7,0.5) {$y$};
        \vertex (10) at (-2,1) [scale=.55,fill=black] {};
        \node ($y_1$) [scale=.85] at (-2.5,1.5) {$y_1$};
        \vertex (11) at (-2,-1) [scale=.55] {};
        \node ($y_1'$) [scale=.85] at (-1.3,-1.1) {$y_1'$};

        \path
        (1) edge (9)
        (9) edge (10)
        (9) edge (11)
        ;

        \draw (2.8,1.15)..controls (2,1.8) and (-1.3,1.8) ..(-1.9,1.15);
        \draw (-1.9,1.15)..controls (-4.6,-3.6) and (2.5,-1.5) ..(2.7,-1.1) ;

        %\node (1) [scale=.85] at (1,-3.5) {If $y_1v_1' \in E(G)$, $xv_2' \notin E(G)$};

%Left
        \vertex (21) at (-9.5,0) [scale=.55,fill=black] {};
        \node ($x$) [scale=.85] at (-9.5,0.5) {$x$};
        \vertex (22) at (-8,0) [scale=.55,fill=lightgray] {};
        \node ($v$) [scale=.85] at (-8.07,0.5) {$v$};
        \vertex (23) at (-7.2,-1) [scale=.55] {};
        \node ($v_1'$) [scale=.85] at (-7.2,-1.7) {$v_1'$};
        \vertex (24) at (-6.2,-1) [scale=.75] {};
        \node ($v_2'$) [scale=.85] at (-6,-1.7) {$v_2'$};
        \vertex (25) at (-5.2,-0.5) [scale=.55,fill=lightgray] {};
        \node ($v_3'$) [scale=.85] at (-4.7,-1) {$v_3'$};
        \vertex (26) at (-5.2,0.5) [scale=.55,fill=black] {};
        \node ($v_3$) [scale=.85] at (-4.7,0.8) {$v_3$};
        \vertex (27) at (-6.2,1) [scale=.55,fill=lightgray] {};
        \node ($v_2$) [scale=.85] at (-6,1.5) {$v_2$};
        \vertex (28) at (-7.2,1) [scale=.55] {};
        \node ($v_1$) [scale=.85] at (-7.2,1.5) {$v_1$};

        \path
        (21) edge (22)
        (22) edge (23)
        (23) edge (24)
        (24) edge (25)
        (25) edge (26)
        (26) edge (27)
        (27) edge (28)
        (28) edge (22)
        ;

        \vertex (29) at (-11,0) [scale=.55,fill=lightgray] {};
        \node ($y$) [scale=.85] at (-10.7,0.5) {$y$};
        \vertex (30) at (-12,1) [scale=.55] {};
        \node ($y_1$) [scale=.85] at (-12.5,1.5) {$y_1$};
        \vertex (31) at (-12,-1) [scale=.55] {};
        \node ($y_1'$) [scale=.85] at (-12.5,-1.5) {$y_1'$};

        \path
        (21) edge (29)
        (29) edge (30)
        (29) edge (31)
        ;

        \draw (-7.2,1.15)..controls (-8,1.8) and (-11.3,1.8) ..(-12,1.15);

        %\node (2) [scale=.85] at (-9,-3.5) {If $y_1v_1' \notin E(G)$};

    %Right
        \vertex (41) at (10.5,0) [scale=.55,fill=lightgray] {};
        \node ($x$) [scale=.85] at (10.5,0.5) {$x$};
        \vertex (42) at (12,0) [scale=.55,fill=lightgray] {};
        \node ($v$) [scale=.85] at (11.93,0.5) {$v$};
        \vertex (43) at (12.8,-1) [scale=.55,fill=lightgray] {};
        \node ($v_1'$) [scale=.85] at (12.1,-1) {$v_1'$};
        \vertex (44) at (13.8,-1) [scale=.55,fill=black] {};
        \node ($v_2'$) [scale=.85] at (14,-1.7) {$v_2'$};
        \vertex (45) at (14.8,-0.5) [scale=.55,fill=lightgray] {};
        \node ($v_3'$) [scale=.85] at (15.3,-1) {$v_3'$};
        \vertex (46) at (14.8,0.5) [scale=.55] {};
        \node ($v_3$) [scale=.85] at (15.3,0.8) {$v_3$};
        \vertex (47) at (13.8,1) [scale=.55,fill=lightgray] {};
        \node ($v_2$) [scale=.85] at (14,1.5) {$v_2$};
        \vertex (48) at (12.8,1) [scale=.55,fill=black] {};
        \node ($v_1$) [scale=.85] at (12.8,1.5) {$v_1$};

        \path
        (41) edge (42)
        (42) edge (43)
        (43) edge (44)
        (44) edge (45)
        (45) edge (46)
        (46) edge (47)
        (47) edge (48)
        (48) edge (42)
        ;

        \vertex (49) at (9,0) [scale=.55] {};
        \node ($y$) [scale=.85] at (9.3,0.5) {$y$};
        \vertex (50) at (8,1) [scale=.55,fill=lightgray] {};
        \node ($y_1$) [scale=.85] at (7.5,1.5) {$y_1$};
        \vertex (51) at (8,-1) [scale=.55] {};
        \node ($y_1'$) [scale=.85] at (8.7,-1.1) {$y_1'$};

        \path
        (41) edge (49)
        (49) edge (50)
        (49) edge (51)
        ;

        \draw (12.7,1.13)..controls (12,1.8) and (8.7,1.8) ..(8.1,1.13);
        \draw (7.9,0.87)..controls (5.4,-3.6) and (12.1,-2.5) ..(12.7,-1.1);
        \draw (10.5,-0.12)..controls (11,-2) and (13,-2) ..(13.8,-1) ;

        %\node (3) [scale=.85] at (12,-3.5) {If $y_1v_1' \in E(G)$, $xv_2' \in E(G)$};

        \end{tikzpicture}
    \end{center}
    \par {\footnotesize \centerline{{\bf Fig. 5.} ~For the subcases that $H^* \cong P_3$ and $G_v \cong C_7$. \hypertarget{Fig5}}}
    \end{figure}

    (iv) $G_v \cong C_{11}$. Since $d_G(v_1)=\Delta(G)=3$, $y_1'v_1 \notin E(G)$. Recalling that $y_1'v_1' \notin E(G)$ and $y_1'x \notin E(G)$, we determine $d(y_1')=1$. Take
    \begin{displaymath}
        D = \left\{
        \begin{array}{ll}
        \{x,v_3, v_3'\},& \text{if}\ y_1v_1' \notin E(G),\\
        \{y_1,v_3, v_3'\},& \text{if}\ y_1v_1' \in E(G), \ xv_5 \notin E(G) \ \text{and}\ xv_5' \notin E(G),\\
        \{x, v_2,v_2'\},& \text{if}\ y_1v_1' \in E(G), \ xv_5 \in E(G) \ \text{or}\ xv_5' \in E(G).\\
        \end{array} \right.
    \end{displaymath}
    Clearly, $D$ is a 1-isolating set of $G$. Hence, $\iota_1(G) \leq |D|= 3< \frac{17}{4}=\frac{n}{4}$.

    (II) $d_{H^*}(y)=1$. Let $N_{H^*}(y_1)=\{y,y_2\}$. If $y_2v' \in E(G)$ for some $v' \in \{v_1,v_1'\}$, then $yy_1y_2v'vxy$ is a 6-cycle in $G$, a contradiction. So, let $y_2v_1,y_2v_1' \notin E(G)$. If $yv_1 \in E(G)$, then take
    \begin{displaymath}
        D = \left\{
        \begin{array}{ll}
        \{y\},& \text{if}\ G_v\in \{P_3, C_3\},\\
        \{y, v_2'\},& \text{if}\ G_v\cong C_{7},\\
        \{y, v_3,v_3'\},& \text{if}\ G_v\cong C_{11}.\\
        \end{array} \right.
    \end{displaymath}
    Clearly, $D$ is a 1-isolating set of $G$. Hence, $\iota_1(G) \leq |D| = \frac{n-3}{4}<\frac{n}{4}$. 
    
    So, let $yv_1 \notin E(G)$. By the symmetry of $v_1$ and $v_1'$, let $yv_1' \notin E(G)$. Since $|N(H^*)|\geq 2$, we may assume that $y_1v_1 \in E(G)$. It follows that $G_v \ncong C_3$, otherwise $yxvv_1'v_1y_1y$ is a 6-cycle in $G$. If $xv_1' \in E(G)$, then $yxv_1'vv_1y_1y$ is a 6-cycle in $G$. If $G_v \in \{C_7,C_{11}\}$ and $xv_3 \in E(G)$, then $yxv_3v_2v_1y_1y$ is a 6-cycle in $G$. So, let $xv_1' \notin E(G)$ and $xv_3 \notin E(G)$. If $xy_2 \in E(G)$, then take 
    \begin{displaymath}
        D = \left\{
        \begin{array}{ll}
        \{x\},& \text{if}\ G_v \cong P_3,\\
        \{x, v_3\},& \text{if}\ G_v\cong C_{7},\\
        \{x, v_3,v_3'\},& \text{if}\ G_v\cong C_{11}.\\
        \end{array} \right.
    \end{displaymath}
    Clearly, $D$ is a 1-isolating set of $G$. Hence, $\iota_1(G) \leq |D| = \frac{n-3}{4}<\frac{n}{4}$.
    
    So, let $xy_2 \notin E(G)$. Now we take
    \begin{displaymath}
        D = \left\{
        \begin{array}{ll}
        \{v_1\},& \text{if}\ G_v \cong P_3,\\
        \{v_1,v_3'\},& \text{if}\ G_v\cong C_{7},\\
        \{v_1, v_3', v_5\},& \text{if}\ G_v\cong C_{11}.\\
        \end{array} \right.
    \end{displaymath}
    Clearly, $D$ is a 1-isolating set of $G$. Hence, $\iota_1(G) \leq |D| = \frac{n-3}{4}<\frac{n}{4}$.

\emph{Subcase 2.2.3.}  $H^* \cong C_7$. We consider the structure of $G_v$.

    (i) $G_v \cong C_3$. Since $G$ contains no 6-cycles, $E(\{y_1, y_2, y_1',y_2'\},\{v_1,v_1'\})=\emptyset$. Hence, $D=\{x, y_3\}$ is a 1-isolating set of $G$, and $\iota_1(G) \leq |D|=2 < \frac{11}{4} =\frac{n}{4}$.

    (ii) $G_v \cong P_3$. For each $v' \in \{v_1,v_1'\}$, $yy_1y_2v'vxy$ is a 6-cycle in $G$ if $y_2v' \in E(G)$, and $yy_1'y_2'v'vxy$ is a 6-cycle in $G$ if $y_2'v' \in E(G)$. So, let $E(\{y_2,y_2'\},\{v_1,v_1'\}) =\emptyset$. If $E(\{y_1,y_1'\},\{v_1,v_1'\})=\emptyset$, then $D=\{x, y_3\}$ is a 1-isolating set of $G$, and then $\iota_1(G) \leq |D|=2 < \frac{11}{4} =\frac{n}{4}$.
    
    Hence, we may assume that $y_1v_1 \in E(G)$. It is noted that if $y_1'v_1' \in E(G)$, then $y_1yy_1'v_1'vv_1y_1$ is a 6-cycle in $G$, a contradiction. So, $y_1'v_1' \notin E(G)$. Take 
    \begin{displaymath}
        D = \left\{
        \begin{array}{ll}
        \{x, y_3\},& \text{if}\ \ y_1'v_1 \notin E(G),\\
        \{v_1, y_3'\},& \text{if}\ \ y_1'v_1 \in E(G),\ y_2x \notin E(G),\\
        \{y_1',y_2\},& \text{if}\ \ y_1'v_1 \in E(G),\ y_2x \in E(G).\\
        \end{array} \right.
    \end{displaymath}
    If $y_1'v_1 \in E(G)$ and $ y_2x \notin E(G)$, then $xv_1' \notin E(G)$. Otherwise, $xv_1'vv_1y_1yx$ is a 6-cycle in $G$. If $y_1'v_1 \in E(G)$ and $y_2x \in E(G)$, then $y_3'v_1' \notin E(G)$. Otherwise, $y_3'v_1'vxy_2y_3y_3'$ is a 6-cycle in $G$. Hence, $D$ is a 1-isolating set of $G$, and we have $\iota_1(G) \leq |D|=2 < \frac{11}{4} =\frac{n}{4}$.
    
    (iii) $G_v \cong C_7$. Since $G$ contains no 6-cycles, $E(\{y_2,y_2'\},\{v_1,v_1'\})=\emptyset$. If $E(\{y_1,y_1'\},\{v_1,v_1'\})=\emptyset$, then $D=\{x, y_3,v_3\}$ is a 1-isolating set of $G$. Assume that $E(\{y_1,y_1'\},\{v_1,v_1'\}) \neq \emptyset$ and $y_1v_1 \in E(G)$. Since $d(y_1)=d(v_1)=\Delta(G)=3$, 
    $y_1v_1' \notin E(G)$ and $ y_1'v_1 \notin E(G)$. Furthermore, $y_1'v_1' \notin E(G)$, otherwise $y_1'yy_1v_1vv_1'y_1'$ is a 6-cycle in $G$. It is noted that $D=\{x, y_3, v_3\}$ is also a 1-isolating set of $G$. Hence, $\iota_1(G) \leq |D|=3 < \frac{15}{4} =\frac{n}{4}$. 
   
    (iv) $G_v \cong C_{11}$. Similar to the subcase (iii), we know that $D= \{x, y_3, v_3,v_3'\}$ or $D=\{x,y_3',v_3,v_3'\}$ is a 1-isolating set of $G$, and we have $\iota_1(G) \leq |D|=4 < \frac{19}{4} =\frac{n}{4}$.

\emph{Subcase 2.2.4.} $H^* \cong C_{11}$. Since $G$ contains no 6-cycles, $E(\{y_2, y_2'\},\{v_1, v_1'\}) = \emptyset$. We further consider the structure of $G_v$.

(i) $G_v \cong C_3$. Since $G$ contains no 6-cycles, $E(\{y_1, y_1'\},\{v_1, v_1'\})=\emptyset$. Hence, $D =\{x, y_4,y_4'\}$ is a 1-isolating set of $G$, implying that $\iota_1(G) \leq |D|=3 < \frac{15}{4}=\frac{n}{4}$.

(ii) $G_v \cong P_3$. If $E(\{y_1, y_1'\},\{v_1, v_1'\})=\emptyset$, then $D= \{x, y_4,y_4'\}$ is a 1-isolating set of $G$. Assume that $y_1v_1 \in E(G)$. However, $y_1'v_1'vv_1y_1yy_1'$ is a 6-cycle in $G$ if $y_1'v_1' \in E(G)$, $y_5v_1y_1y_2y_3y_4y_5$ is a 6-cycle in $G$ if $y_5v_1 \in E(G)$, and $xv_1'vv_1y_1yx$ is a 6-cycle in $G$ if $xv_1' \in E(G)$. So, $y_1'v_1', y_5v_1, xv_1' \notin E(G)$. Take 
\begin{displaymath}
    D = \left\{
    \begin{array}{ll}
    \{x,y_4',y_3\},& \text{if}\ y_1'v_1 \notin E(G),\\
    \{y_4',y_4,v_1\},& \text{if}\ y_1'v_1 \in E(G), \ y_2x \notin E(G) \ \text{and}\ y_2'x \notin E(G),\\
    \{y_4', y_3, v_1\},& \text{if}\ y_1'v_1 \in E(G), \ y_2x \in E(G),\\
    \{y_3', y_4, v_1\},& \text{if}\ y_1'v_1 \in E(G), \ y_2'x \in E(G).\\
    \end{array} \right.
\end{displaymath}
Clearly, $D$ is a 1-isolating set of $G$. Hence, $\iota_1(G) \leq |D|=3 < \frac{15}{4}=\frac{n}{4}$.

(iii) $G_v \cong C_7$. Since $G$ contains no 6-cycles, $E(\{y_2, y_2'\},\{v_1, v_1'\})=\emptyset$. If $E(\{y_1,y_1'\},\{v_1,v_1'\}) = \emptyset$, then $D= \{x, y_4,y_4', v_3\}$ is a 1-isolating set of $G$. Assume that $y_1v_1 \in E(G)$. Then, $y_1'v_1' \notin E(G)$. Take
\begin{displaymath}
    D = \left\{
    \begin{array}{ll}
    \{x,y_4',y_3,v_3\},& \text{if}\ y_5v_1' \notin E(G),\\
    \{y,y_3',y_5,v_3\},& \text{if}\ y_5v_1' \in E(G).\\
    \end{array} \right.
\end{displaymath}
Since $d(v_1)=\Delta(G)=3$, $y_3v_1 \notin E(G)$. It is noted that $D$ is a 1-isolating set of $G$. Hence, $\iota_1(G)\leq |D|=4 < \frac{19}{4} = \frac{n}{4}$.

(iv) $G_v \cong C_{11}$. Let $D$ be the set defined as in (iii). Now $D \cup \{v_3'\}$ is a 1-isolating set of $G$. Hence, $\iota_1(G)\leq |D \cup \{v_3'\}|=5 < \frac{23}{4} = \frac{n}{4}$.

\vspace{3mm}

This completes the proof of Theorem \ref{Theorem1.6}.

\section {\large Proof of Theorem \ref{Theorem1.7}}

In this section, we present a proof of Theorem \ref{Theorem1.7}, which is similar to Theorem \ref{Theorem1.6}'s. Recall the statement of Theorem \ref{Theorem1.7}.

\begin{theorembis}{Theorem1.7}
    If $G \notin \{P_3,C_3,C_7,C_{11}\}$ is a connected graph of order $n$ without induced 5- and 6-cycles, then $\iota_1(G) \leq \frac{n}{4}$.
\end{theorembis}

\begin{proof}
Let $G=(V,E)$ be a connected graph of order $n$ without induced 5- and 6-cycles. Suppose that $G$ is not an $\mathcal{S}$-graph, that is, $G \notin \{P_3,C_3,C_7,C_{11}\}$. The proof is by induction on $n$. It is easy to see that $\iota_1(G) \leq \frac{n}{4}$ for the graphs $G$ of order $n \leq 3$. Let $n \geq 4$. If $\Delta(G) \leq 2$, then $G$ is a path or a cycle. By Lemma \ref{Lemma2.1}, $\iota_1(G)\leq \frac{n}{4}$. Fix a vertex $v \in V(G)$ with $d(v)=\Delta(G)$. If $\Delta(G)=n-1$, then $\iota_1(G)\leq |\{v\}| = 1 \leq \frac{n}{4}$. Hence, we assume that $3 \leq \Delta(G) \leq n-2$.

Let $G'=G-N[v]$ with $|V(G')|=n'$. Since $\Delta(G)\leq n-2$, $n' \geq 1$. Let $\mathcal{H}$ be the set of components of $G'$,  $\mathcal{H}_b$ be the set of components of $G'$ isomorphic to an $\mathcal{S}$-graph, and  $\mathcal{H}_g=\mathcal{H} \setminus \mathcal{H}_b$. By the induction hypothesis, $\iota_1(H) \leq \frac{1}{4}|V(H)|$ for any component $H \in \mathcal{H}_g$. If $\mathcal{H}_b = \emptyset$, then $\mathcal{H} = \mathcal{H}_g \neq \emptyset$. By Lemmas \ref{Lemma2.2} and \ref{Lemma2.3}, we have $\iota_1(G) \leq |\{v\}| + \iota_1(G') = 1 + \sum_{H \in \mathcal{H}}\iota_1(H) \leq 1+\frac{1}{4}(n-\Delta(G)-1) \leq \frac{n}{4}$. Hence, we assume that $\mathcal{H}_b \neq \emptyset$ in the following.

For any $x \in N(v)$, let $\mathcal{H}_b^x$ be the set of components $H$ of $\mathcal{H}_b$ with $N(H) = \{x\}$. Note that the graph $G$ of Theorem \ref{Theorem1.6} contains no 6-cycles, while Theorem \ref{Theorem1.7} requires that $G$ contains no induced 5- and 6-cycles. Applying the same way of Case 1 in the proof of Theorem \ref{Theorem1.6}, it is easy to check that Theorem \ref{Theorem1.7} is true for the case that $\mathcal{H}_b^x \neq \emptyset$ for some $x \in N(v)$. Therefore, in the following, we assume that for any $x \in N(v)$, $\mathcal{H}_b^x = \emptyset$, equivalently, for any $H \in \mathcal{H}_b$, $|N(H)| \geq 2$.

Let $k_3$ be the number of components isomorphic to $P_3$ or $C_3$ in $\mathcal{H}_b$, $k_i$ be the number of components isomorphic to  $C_i$ in $\mathcal{H}_b$, where $i \in \{7,11\}$. By the present assumption, $|\mathcal{H}_b|=k_3+k_7+k_{11} \geq 1$. Denote $\Delta(G) = \Delta$ simply. %We prove the following by discussing $\Delta(G)$.

\vspace{3mm}

\begin{claim} \label{ClaimC}
    $|\mathcal{H}_b|+1 \leq \Delta \leq |\mathcal{H}_b|+2$.
\end{claim}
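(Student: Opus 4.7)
The plan is to handle the two inequalities in Claim~\ref{ClaimC} separately; the upper bound is a short ``if-not-done'' argument, while the lower bound is the structural heart of the claim and uses the no-induced-$5$-or-$6$-cycle hypothesis on $G$.

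\emph{Upper bound $\Delta \leq |\mathcal{H}_b|+2$.} Every $\mathcal{S}$-graph $H$ satisfies $\iota_1(H) = (|V(H)|+1)/4$ by direct inspection (since $\iota_1(P_3)=\iota_1(C_3)=1$, $\iota_1(C_7)=2$, and $\iota_1(C_{11})=3$), while by the induction hypothesis $\iota_1(H) \leq |V(H)|/4$ for every $H \in \mathcal{H}_g$. Applying Lemmas~\ref{Lemma2.2} and \ref{Lemma2.3} with the base $1$-isolating set $\{v\}$ of $G[N[v]]$ yields
\[
\iota_1(G) \;\leq\; 1 + \iota_1(G') \;\leq\; 1 + \frac{(n-1-\Delta)+|\mathcal{H}_b|}{4},
\]
which is $\leq n/4$ precisely when $\Delta \geq |\mathcal{H}_b|+3$. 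In that regime Theorem~\ref{Theorem1.7} is already settled for $G$, so one may proceed under the assumption $\Delta \leq |\mathcal{H}_b|+2$.

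\emph{Lower bound $\Delta \geq |\mathcal{H}_b|+1$.} Assume, toward a contradiction, that $\Delta \leq |\mathcal{H}_b|$. Since $|N(H)| \geq 2$ for every $H \in \mathcal{H}_b$ and all such $N(H)$ sit inside the $\Delta$-element set $N(v)$, the bipartite incidence graph between $N(v)$ and $\mathcal{H}_b$ has at least $2|\mathcal{H}_b| \geq 2\Delta$ edges. Averaging over the $\Delta$ vertices of $N(v)$ produces some $x_0 \in N(v)$ lying in $N(H_1) \cap N(H_2)$ for two distinct $H_1, H_2 \in \mathcal{H}_b$. Pick attachments $y_i \in V(H_i)$ with $x_0 y_i \in E(G)$, a second neighbor $x_1 \in N(H_1) \setminus \{x_0\}$ with $x_1 y_1' \in E(G)$, and a shortest $y_1$--$y_1'$ path $P$ in $H_1$. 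Then $v, x_0, y_1, P, y_1', x_1, v$ is a cycle of length $3 + d_{H_1}(y_1, y_1')$. By choosing $y_1'$ suitably among the attachments in $N(H_1)$, this length can be forced into $\{5, 6\}$, except when $H_1 \cong C_3$ (diameter $1$), in which case one swaps to the symmetric cycle through $H_2$ or uses a third attachment forced by the pigeonhole count. The only candidate chords in such a cycle are $x_0 x_1$, edges from $x_0$ or $x_1$ to an interior vertex of $P$, or internal $V(H_1)$-edges, since $v$ has no neighbor in $V(G')$ and $V(H_1), V(H_2)$ lie in disjoint components of $G'$. Each chord is ruled out by rerouting $P$ around $H_1$ using the explicit structure of the $\mathcal{S}$-graphs, or by switching to $H_2$ via $x_0$, producing the desired induced $C_5$ or $C_6$ and contradicting the hypothesis on $G$.

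The main obstacle is the lower bound's case analysis. In particular, when $H_1 \cong C_3$ the diameter-$1$ constraint forces the natural cycle through $v$ and $H_1$ to have length only $4$; resolving this subcase requires using $H_2$ via the shared vertex $x_0$, or exploiting a third component/attachment guaranteed when $|\mathcal{H}_b| > \Delta$ strictly. For $H_1 \in \{P_3, C_7, C_{11}\}$, distances of $2$ or $3$ inside $H_1$ are attainable, leaving only a finite chord-elimination check that depends on the rigid structure of each $\mathcal{S}$-graph.
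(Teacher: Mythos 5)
Your upper bound $\Delta \leq |\mathcal{H}_b|+2$ is correct and is essentially the paper's own argument (the paper packages the identical count as an explicit $1$-isolating set $D_X=\{v\}\cup\bigcup_{H\in\mathcal{H}_b}D_H$ of $G[X]$, but the arithmetic is the same). The lower bound, however, has a fatal gap: $\Delta\geq|\mathcal{H}_b|+1$ is \emph{not} a structural consequence of the hypotheses, so no contradiction can be extracted from $\Delta\leq|\mathcal{H}_b|$. Concretely, let $v$ be adjacent to $x_1,x_2,x_3,x_4$, take four disjoint triangles $T_1,\dots,T_4$ with distinguished vertices $a_j\in T_j$, and add the edges $a_1x_1,a_1x_2,\ a_2x_2,a_2x_3,\ a_3x_3,a_3x_4,\ a_4x_4,a_4x_1$. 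Then $n=17$, $d(v)=\Delta=4$, every component of $G-N[v]$ is a $C_3$ with exactly two neighbours in $N(v)$, so $|\mathcal{H}_b|=4=\Delta$ and all the reductions preceding Claim \ref{ClaimC} are passed. Yet $G$ has no $5$-cycle at all (the vertices $b_j,c_j$ have degree $2$ and lie only on their triangles, and deleting them leaves a bipartite graph), and every $6$-cycle must contain $v$ together with a third, non-consecutive $x_i$, hence has the chord $vx_i$; so $G$ has no induced $5$- or $6$-cycle. Your pigeonhole step does find a vertex $x_0$ meeting two components, but all the resulting cycles are induced $C_4$'s and an induced $C_8$ --- exactly the escape you flag in the $H_1\cong C_3$ case, and ``switching to $H_2$'' cannot help because every component is a triangle attached through a single vertex. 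The ``finite chord-elimination check'' you defer therefore cannot be completed.

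The claim is a reduction, not a structural fact, and the paper proves the lower bound by the same mechanism as the upper one: it exhibits a second $1$-isolating set $D_X'=N(v)\cup\bigcup_{H\in\mathcal{H}_b}D_H'$ of $G[X]$ with $|D_X'|\leq\Delta+k_7+2k_{11}$, which gives $\iota_1(G)\leq\frac{n}{4}+\frac{1}{4}\bigl(3\Delta-1-3|\mathcal{H}_b|\bigr)\leq\frac{n}{4}$ whenever $\Delta\leq|\mathcal{H}_b|$; one then \emph{may assume} $\Delta\geq|\mathcal{H}_b|+1$ in the sequel. (In the example above, $D_X'=\{x_1,x_2,x_3,x_4\}$ is a $1$-isolating set of size $4\leq 17/4$, confirming that the theorem, not your contradiction, is what holds there.) To repair your write-up, replace the forbidden-cycle analysis by this second counting argument; no structural argument is needed for either inequality.
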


\begin{proof}
    Let $X=N[v] \cup \bigcup_{H \in \mathcal{H}_b}V(H)$. Then $G-X=\bigcup_{H \in \mathcal{H}_g}H$. For each $H \in \mathcal{H}_b$, let $xy \in E(G)$ for some $x \in N(v)$ and $y \in V(H)$, and if $H \in \{C_7, C_{11}\}$, let $y_d$, $y_d'$ be the two vertices of $H$ which are distance $d$ from $y$ on the cycle. Take 
\begin{displaymath}
    D_{H} = \left\{
        \begin{array}{ll}
        \{y\},& \text{if}\ H \in \{P_3, C_3\},\\
        \{y_3, y_3'\},& \text{if}\  H \cong C_7,\\
        \{y_2,y_2', y_5'\},& \text{if}\  H \cong C_{11}.\\
        \end{array} \right.
\end{displaymath}
Note that $D_X=\{v\} \cup \bigcup_{H \in \mathcal{H}_b}D_H$ is a 1-isolating set of $G[X]$, as shown in Fig. \hyperlink{Fig6}{6}.

\begin{figure}[h!]
	\begin{center}
		\begin{tikzpicture}[scale=.48]
			\tikzstyle{vertex}=[circle, draw, inner sep=0pt, minimum size=6pt]
			\tikzset{vertexStyle/.append style={rectangle}}
			\vertex (1) at (0,0) [scale=.75,fill=lightgray] {};
			\node ($x$) at (0.7,-0.5) {$x$};
			\vertex (2) at (0,-2) [scale=.75] {};
			\vertex (3) at (-1,-2.8) [scale=.75,fill=lightgray] {};
			\vertex (4) at (-1,-3.6) [scale=.75,fill=black] {};
			\vertex (5) at (-0.5,-4.4) [scale=.75,fill=lightgray] {};
			\vertex (6) at (0.5,-4.4) [scale=.75,fill=lightgray] {};
			\vertex (7) at (1,-3.6) [scale=.75,fill=black] {};
			\vertex (8) at (1,-2.8) [scale=.75,fill=lightgray] {};
			
			\path
			(1) edge (2)
			(2) edge (3)
			(3) edge (4)
			(4) edge (5)
			(5) edge (6)
			(6) edge (7)
			(7) edge (8)
			(2) edge (8)
			;
			
			\vertex (12) at (-2,0) [scale=.75] {};
			\node ($y$) at (-1.6,-0.5) {$y$};
			\vertex (13) at (-2.6,1) [scale=.75,fill=lightgray] {};
			\vertex (14) at (-3.6,1) [scale=.75,fill=black] {};
			\node ($y_2$) at (-3.6,1.6) {$y_2$};
			\vertex (15) at (-4.6,1) [scale=.75,fill=lightgray] {};
			\vertex (16) at (-5.6,1) [scale=.75] {};
			\vertex (17) at (-6.6,0.5) [scale=.75,fill=lightgray] {};
			\vertex (18) at (-6.6,-0.5) [scale=.75,fill=black] {};
			\node ($y_5'$) at (-7.2,-0.8) {$y_5'$};
			\vertex (19) at (-5.6,-1) [scale=.75,fill=lightgray] {};
			\vertex (20) at (-4.6,-1) [scale=.75,fill=lightgray] {};
			\vertex (21) at (-3.6,-1) [scale=.75,fill=black] {};
			\node ($y_2'$) at (-3.6,-1.8) {$y_2'$};
			\vertex (22) at (-2.6,-1) [scale=.75,fill=lightgray] {};
			
			\path
			(1) edge (12)
			(12) edge (13)
			(13) edge (14)
			(14) edge (15)
			(15) edge (16)
			(16) edge (17)
			(17) edge (18)
			(18) edge (19)
			(19) edge (20)
			(20) edge (21)
			(21) edge (22)
			(22) edge (12)
			;

			\vertex (9) at (-1.7,-1.7) [scale=.75,fill=black] {};
			\vertex (10) at (-2.6,-2.6) [scale=.75,fill=lightgray] {};
			\vertex (11) at (-3.5,-3.5) [scale=.75] {};
			
			\path
			(1) edge (9)
			(9) edge (10)
			(10) edge (11)
			;
			
			\vertex (23) at (-1.5,1.7) [scale=.75,fill=black] {};
			\vertex (24) at (-2.9,2) [scale=.75,fill=lightgray] {};
			\vertex (25) at (-1.8,3) [scale=.75,fill=lightgray] {};
			
			\path
			(1) edge (23)
			(23) edge (24)
			(23) edge (25)
			;
			
			\vertex (26) at (0,2) [scale=.75,fill=black] {};
			\vertex (27) at (-0.7,3) [scale=.75,fill=lightgray] {};
			\vertex (28) at (0.7,3) [scale=.75,fill=lightgray] {};
			
			\path
			(1) edge (26)
			(26) edge (27)
			(27) edge (28)
			(26) edge (28)
			;
			
			\vertex (29) at (2,0) [scale=.75,fill=black] {};
			\node ($v$) at (1.7,-0.5) {$v$};
			\vertex (30) at (4.5,3) [scale=.75,fill=lightgray] {};
			\vertex (31) at (4.5,1.5) [scale=.75,fill=lightgray] {};
			\vertex (32) at (4.5,0) [scale=.75,fill=lightgray] {};
			\vertex (33) at (4.5,-0.85) [scale=.5,fill=black] {};
			\vertex (34) at (4.5,-1.5) [scale=.5,fill=black] {};
			\vertex (35) at (4.5,-2.15) [scale=.5,fill=black] {};
			\vertex (36) at (4.5,-3) [scale=.75,fill=lightgray] {};
			
			\path
			(29) edge (30)
			(29) edge (31)
			(29) edge (32)
			(29) edge (36)
			(1)  edge (29)
			;
			
			\vertex (37) at (0.7,-1.55) [scale=.01,fill=black] {};
			\vertex (38) at (6.2,3) [scale=.01] {};
			\vertex (39) at (6.5,1.5) [scale=.05] {};
			\vertex (40) at (6.2,0) [scale=.01] {};
			\vertex (200) at (6.5,-1.45) [scale=.05] {};
			\vertex (400) at (6.4,2.4) [scale=.01,fill=black] {};

			\path
			(30) edge (38) [dashed]
			(32) edge (40)
			(36) edge (200)
			(1)  edge (37)    
			(26) edge (30)
			(1)  edge (30)  
			(6) edge (36)
			(31) edge (400)
			;
			
			\draw (8,1.5) ellipse (1.5 and 1);
			\node [scale=0.8]($H$) at (8,1.5) {$H \in \mathcal{H}_g$};
			\path
			(31) edge (39)
			;
			
			\vertex (41) at (6.5,-3) [scale=.75] {};
			\vertex (42) at (7.3,-4) [scale=.75,fill=lightgray] {};
			\vertex (43) at (8.3,-4) [scale=.75,fill=black] {};
			\vertex (44) at (9.3,-3.5) [scale=.75,fill=lightgray] {};
			\vertex (45) at (9.3,-2.5) [scale=.75,fill=lightgray] {};
			\vertex (46) at (8.3,-2) [scale=.75,fill=black] {};
			\vertex (47) at (7.3,-2) [scale=.75,fill=lightgray] {};
			
			\path
			(36) edge (41)
			(41) edge (42)
			(42) edge (43)
			(43) edge (44)
			(44) edge (45)
			(45)  edge (46)
			(46) edge (47)
			(47) edge (41)
			;
			
			\draw (0.8,3.1) parabola bend (2.3,3.3) (4.3,3) [dashed];
			
			\draw (-5.5,1.1) parabola bend (-0.3,4.6) (4.3,3.23) [dashed];   
			
			\draw (4.65,-0.06)..controls (5.5,-0.3) and (8.2,-0.6) ..(9.3,-2.35)[dashed];
			
			\draw (-1.7,-1.84)..controls (-2.6,-5) and (0,-5.4) ..(-0.2,-5.2)[dashed];
			
			\draw (-0.18,-5.25)..controls (0.6,-5.3) and (2.3,-5) ..(4.4,-3.2)[dashed];
			
			\draw (-1.5,1.84)..controls (-1.2,4.5) and (1.1,4.5) ..(4.3,3.1)[dashed];
		\end{tikzpicture}
	\end{center}
	\vspace{1.5mm}
	\par {\footnotesize \centerline{{\bf Fig. 6.} ~The set $D_X$. \hypertarget{Fig6}}}
\end{figure}
    
    By Lemmas \ref{Lemma2.2} and \ref{Lemma2.3}, and by the induction hypothesis, we have 
    $$\begin{aligned}
    \iota_1(G)& \leq |D_X|+\iota_1(G-X) = |\{v\}|+\sum_{H \in \mathcal{H}_b}|D_H| + \sum_{H \in \mathcal{H}_g} \iota_1(H)\\
              & \leq 1+k_3+2k_7+3k_{11}+
              \frac{1}{4} (n-\Delta-1-3k_3-7k_7-11k_{11}) \\
              &= \frac{n}{4}+ \frac{1}{4}(3-\Delta+k_3+k_7+k_{11}).
    \end{aligned}$$
If $\Delta \geq k_3+k_7+k_{11}+3=|\mathcal{H}_b|+3$, then $\iota_1(G) \leq \frac{n}{4}$. Hence, it remains to consider the case $\Delta \leq |\mathcal{H}_b|+2$. Take
\begin{displaymath}
    D_{H}' = \left\{
        \begin{array}{ll}
        \{x\},& \text{if}\ H \in \{P_3, C_3\},\\
        \{x, y_3\},& \text{if}\  H \cong C_7,\\
        \{x, y_3,y_3'\},& \text{if}\  H \cong C_{11}.\\
        \end{array} \right.
\end{displaymath}
Clearly, $D_X'=N(v) \cup \bigcup_{H \in \mathcal{H}_b}D_{H}'$ is a 1-isolating set of $G[X]$, as shown in Fig. \hyperlink{Fig7}{7}.

\begin{figure}[h!]
	\begin{center}
		\begin{tikzpicture}[scale=.48]
			\tikzstyle{vertex}=[circle, draw, inner sep=0pt, minimum size=6pt]
			\tikzset{vertexStyle/.append style={rectangle}}
			\vertex (1) at (0,0) [scale=.75,fill=black] {};
			\node ($x$) at (0.7,-0.5) {$x$};
			\vertex (2) at (0,-2) [scale=.75,fill=lightgray] {};
			\vertex (3) at (-1,-2.8) [scale=.75] {};
			\vertex (4) at (-1,-3.6) [scale=.75,fill=lightgray] {};
			\vertex (5) at (-0.5,-4.4) [scale=.75,fill=black] {};
			\vertex (6) at (0.5,-4.4) [scale=.75,fill=lightgray] {};
			\vertex (7) at (1,-3.6) [scale=.75] {};
			\vertex (8) at (1,-2.8) [scale=.75] {};

			\path
			(1) edge (2)
			(2) edge (3)
			(3) edge (4)
			(4) edge (5)
			(5) edge (6)
			(6) edge (7)
			(7) edge (8)
			(2) edge (8)
			;
			
			\vertex (12) at (-2,0) [scale=.75,fill=lightgray] {};
			\node ($y$) at (-1.6,-0.5) {$y$};
			\vertex (13) at (-2.6,1) [scale=.75] {};
			\vertex (14) at (-3.6,1) [scale=.75,fill=lightgray] {};
			\vertex (15) at (-4.6,1) [scale=.75,fill=black] {};
			\node ($y_3$) at (-4.2,1.6) {$y_3$};
			\vertex (16) at (-5.6,1) [scale=.75,fill=lightgray] {};
			\vertex (17) at (-6.6,0.5) [scale=.75] {};
			\vertex (18) at (-6.6,-0.5) [scale=.75] {};
			\vertex (19) at (-5.6,-1) [scale=.75,fill=lightgray] {};
			\vertex (20) at (-4.6,-1) 
			[scale=.75,fill=black] {};\node ($y_3'$) at (-4.6,-1.8) {$y_3'$};
			\vertex (21) at (-3.6,-1) [scale=.75,fill=lightgray] {};
			\vertex (22) at (-2.6,-1) [scale=.75] {};
			
			\path
			(1) edge (12)
			(12) edge (13)
			(13) edge (14)
			(14) edge (15)
			(15) edge (16)
			(16) edge (17)
			(17) edge (18)
			(18) edge (19)
			(19) edge (20)
			(20) edge (21)
			(21) edge (22)
			(22) edge (12)
			;
			
			\vertex (9) at (-1.7,-1.7) [scale=.75,fill=lightgray] {};
			\vertex (10) at (-2.6,-2.6) [scale=.75] {};
			\vertex (11) at (-3.5,-3.5) [scale=.75] {};
			
			\path
			(1) edge (9)
			(9) edge (10)
			(10) edge (11)
			;
			
			\vertex (23) at (-1.5,1.7) [scale=.75,fill=lightgray] {};
			\vertex (24) at (-2.9,2) [scale=.75] {};
			\vertex (25) at (-1.8,3) [scale=.75] {};
			
			\path
			(1) edge (23)
			(23) edge (24)
			(23) edge (25)
			;
			
			\vertex (26) at (0,2) [scale=.75,fill=lightgray] {};
			\vertex (27) at (-0.7,3) [scale=.75] {};
			\vertex (28) at (0.7,3) [scale=.75] {};
			
			\path
			(1) edge (26)
			(26) edge (27)
			(27) edge (28)
			(26) edge (28)
			;
			
			\vertex (29) at (2,0) [scale=.75,fill=lightgray] {};
			\node ($v$) at (1.7,-0.5) {$v$};
			\vertex (30) at (4.5,3) [scale=.75,fill=black] {};
			\vertex (31) at (4.5,1.5) [scale=.75,fill=black] {};
			\vertex (32) at (4.5,0) [scale=.75,fill=black] {};
			\vertex (33) at (4.5,-0.85) [scale=.5,fill=black] {};
			\vertex (34) at (4.5,-1.5) [scale=.5,fill=black] {};
			\vertex (35) at (4.5,-2.15) [scale=.5,fill=black] {};
			\vertex (36) at (4.5,-3) [scale=.75,fill=black] {};
			
			\path
			(29) edge (30)
			(29) edge (31)
			(29) edge (32)
			(29) edge (36)
			(1)  edge (29)
			;
			
			\vertex (37) at (0.7,-1.55) [scale=.01,fill=black] {};
			\vertex (38) at (6.22,3) [scale=.01] {};
			\vertex (39) at (6.5,1.5) [scale=.05] {};
			\vertex (40) at (6.22,0) [scale=.01] {};
			\vertex (200) at (6.5,-1.45) [scale=.05] {};
			\vertex (400) at (6.4,2.4) [scale=.01,fill=black] {};
			
			\path
			(30) edge (38) [dashed]
			(32) edge (40)
			(1)  edge (37)
			(26) edge (30)
			(1)  edge (30)
			(6) edge (36)
			(36) edge (200)
			(31) edge (400)
			;

			\draw (8,1.5) ellipse (1.5 and 1);
			\node [scale=0.8] ($H$) at (8,1.5) {$H \in \mathcal{H}_g$};
			\path
			(31) edge (39)
			;
			
			\vertex (41) at (6.5,-3) [scale=.75,fill=lightgray] {};
			\vertex (42) at (7.3,-4) [scale=.75] {};
			\vertex (43) at (8.3,-4) [scale=.75] {};
			\vertex (44) at (9.3,-3.5) [scale=.75,fill=lightgray] {};
			\vertex (45) at (9.3,-2.5) [scale=.75,fill=black] {};
			\vertex (46) at (8.3,-2) [scale=.75,fill=lightgray] {};
			\vertex (47) at (7.3,-2) [scale=.75] {};
			
			\path
			(36) edge (41)
			(41) edge (42)
			(42) edge (43)
			(43) edge (44)
			(44) edge (45)
			(45)  edge (46)
			(46) edge (47)
			(47) edge (41)
			;
			
			\draw (0.8,3.1) parabola bend (2.3,3.3) (4.3,3) [dashed];
			
			\draw (-5.5,1.1) parabola bend (-0.3,4.6) (4.3,3.23) [dashed];
			
			\draw (4.65,-0.06)..controls (5.5,-0.3) and (8.2,-0.6) ..(9.3,-2.35)[dashed];
			
			\draw (-1.7,-1.84)..controls (-2.6,-5) and (0,-5.4) ..(-0.2,-5.2)[dashed];
			
			\draw (-0.18,-5.25)..controls (0.6,-5.3) and (2.3,-5) ..(4.4,-3.2)[dashed];
			
			\draw (-1.5,1.84)..controls (-1.2,4.5) and (1.1,4.5) ..(4.3,3.1)[dashed];
		\end{tikzpicture}
	\end{center}
	\vspace{1.5mm}
	\par {\footnotesize \centerline{{\bf Fig. 7.} ~The set $D_X'$.\hypertarget{Fig7}}}
\end{figure}

By Lemmas \ref{Lemma2.2} and \ref{Lemma2.3}, and by the induction hypothesis, we have 
$$\begin{aligned}
    \iota_1(G)& \leq |D_X'|+\iota_1(G-X) \leq |N(v)| + \sum_{H \in \mathcal{H}_b} |D_{H}' \setminus \{x\}|
              + \sum_{H \in \mathcal{H}_g} \iota_1(H)\\
              & \leq \Delta + k_7+2k_{11}+
              \frac{1}{4} (n-\Delta-1-3k_3-7k_7-11k_{11}) \\
              &= \frac{n}{4}+ \frac{1}{4}(3\Delta-1-3k_3-3k_7-3k_{11}).
\end{aligned}$$
If $\Delta \leq k_3+k_7+k_{11}=|\mathcal{H}_b|$, then $\iota_1(G) \leq \frac{n}{4}$. Hence, we may assume that $|\mathcal{H}_b|+1 \leq \Delta \leq |\mathcal{H}_b|+2$. This proves Claim \ref{ClaimC}.
\end{proof}

In terms of the value of $\Delta$, we distinguish the remaining proof into three cases.

\vspace{3mm}

{\bf Case 1.} $\Delta \geq 5$.

\vspace{3mm}

Let $X, D_H, D_H'$ and $D_X$ be the sets defined as in the proof of Claim \ref{ClaimC}. %Now we construct a new 1-isolating set of $G[X]$ of smaller order based on $D_X$.

Since $|\mathcal{H}_b| \geq \Delta-2$, and for each $H \in \mathcal{H}_b$, $|N(H)|\geq 2$, we have $\sum_{H \in \mathcal{H}_b}|N(H)|\geq 2|\mathcal{H}_b|\geq 2(\Delta -2)\geq \Delta+1 = |N(v)| + 1$.  
By the Pigeonhole Principle, there exists a vertex $x \in N(v)$ such that $x \in N(H_1) \cap N(H_2)$ for some $H_1, H_2 \in \mathcal{H}_b$. Then, $D_X'' = D_{H_1}' \cup D_{H_2}' \cup \bigcup_{H \in \mathcal{H}_b \setminus \{H_1,H_2\}} D_H \cup \{v\}= [D_X \setminus (D_{H_1} \cup D_{H_2})] \cup (D_{H_1}' \cup D_{H_2}')$ is a 1-isolating set of $G[X]$. Clearly, $|D_X''|=|D_X|-1$. One can see Fig. \hyperlink{Fig8}{8} for an example from $D_X$ to $D_X''$ where $H_1 \cong C_3$ and $H_2 \cong C_{11}$.

    \begin{figure}[h!]
	\begin{center}
		\begin{tikzpicture}[scale=.48]
			\tikzstyle{vertex}=[circle, draw, inner sep=0pt, minimum size=6pt]
			\tikzset{vertexStyle/.append style={rectangle}}
			\vertex (1) at (-1,0) [scale=.75,fill=lightgray] {};
			\node ($x$) at (-1.5,-0.5) {$x$};
			\vertex (2) at (-1,-2) [scale=.75] {};
			\vertex (3) at (-2,-2.8) [scale=.75,fill=lightgray] {};
			\vertex (4) at (-2,-3.6) [scale=.75,fill=black] {};
			\vertex (5) at (-2,-4.4) [scale=.75,fill=lightgray] {};
			\vertex (6) at (0,-4.4) [scale=.75,fill=lightgray] {};
			\vertex (7) at (0,-3.6) [scale=.75,fill=black] {};
			\vertex (8) at (0,-2.8) [scale=.75,fill=lightgray] {};
			\vertex (52) at (-2,-5.2) [scale=.75,fill=lightgray] {};
			\vertex (55) at (0,-5.2) [scale=.75] {};
			\vertex (53) at (-1.5,-6) [scale=.75,fill=black] {};
			\vertex (54) at (-0.5,-6) [scale=.75,fill=lightgray] {};
			
			\path
			(1) edge (2)
			(2) edge (3)
			(3) edge (4)
			(4) edge (5)
			(5) edge (52)
			(52) edge (53)
			(53) edge (54)
			(54) edge (55)
			(55) edge (6)
			(6) edge (7)
			(7) edge (8)
			(2) edge (8)
			;
			
			\vertex (26) at (-1,2) [scale=.75,fill=black] {};
			\vertex (27) at (-1.7,3) [scale=.75,fill=lightgray] {};
			\vertex (28) at (-0.3,3) [scale=.75,fill=lightgray] {};
			
			\path
			(1) edge (26)
			(26) edge (27)
			(27) edge (28)
			(26) edge (28)
			;
			
			\vertex (29) at (2,0) [scale=.75,fill=black] {};
			\node ($v$) at (1.7,-0.5) {$v$};
			\vertex (30) at (4.5,3) [scale=.75,fill=lightgray] {};
			\vertex (31) at (4.5,1.5) [scale=.75,fill=lightgray] {};
			\vertex (32) at (4.5,0) [scale=.75,fill=lightgray] {};
			\vertex (33) at (4.5,-1.1) [scale=.5,fill=black] {};
			\vertex (34) at (4.5,-1.8) [scale=.5,fill=black] {};
			\vertex (35) at (4.5,-2.5) [scale=.5,fill=black] {};
			\vertex (36) at (4.5,-3.6) [scale=.75,fill=lightgray] {};
			
			\path
			(29) edge (30)
			(29) edge (31)
			(29) edge (32)
			(29) edge (36)
			(1)  edge (29)
			;

			\vertex (37) at (-2.7,0) [scale=.05] {};
			\vertex (38) at (6.2,3) [scale=.01] {};
			\vertex (39) at (6.5,1.5) [scale=.05] {};
			\vertex (40) at (6.2,0) [scale=.01] {};
			\vertex (300) at (6.15,-2.2) [scale=.05] {};
			\vertex (400) at (6.3,2.4) [scale=.01,fill=black] {};
			
			\path
			(30) edge (38) [dashed]
			(32) edge (40)
			(1)  edge (37)
			(28) edge (30)
			(1)  edge (30)
			(7) edge (36)
			(2) edge (36)
			(36) edge (300)
			(31) edge (400)
			;
			
			\draw (8,1.5) ellipse (1.5 and 1);
			\node [scale=0.8]($H$) at (8,1.5) {$H \in \mathcal{H}_g$};
			\path
			(31) edge (39)
			;
			
			\vertex (41) at (6.5,-3.6) [scale=.75] {};
			\vertex (42) at (7.3,-4.6) [scale=.75,fill=lightgray] {};
			\vertex (43) at (8.3,-4.6) [scale=.75,fill=black] {};
			\vertex (44) at (9.3,-4.1) [scale=.75,fill=lightgray] {};
			\vertex (45) at (9.3,-3.1) [scale=.75,fill=lightgray] {};
			\vertex (46) at (8.3,-2.6) [scale=.75,fill=black] {};
			\vertex (47) at (7.3,-2.6) [scale=.75,fill=lightgray] {};
			
			\path
			(36) edge (41)
			(41) edge (42)
			(42) edge (43)
			(43) edge (44)
			(44) edge (45)
			(45)  edge (46)
			(46) edge (47)
			(47) edge (41)
			;
			
			%rectangle
			\vertex (49) at (-3,3.8) [scale=.05] {};
			\vertex (50) at (1,3.8) [scale=.05] {};
			\vertex (51) at (1,-6.8) [scale=.05] {};
			\vertex (52) at (-3,-6.8) [scale=.05] {};
			
			\path
			(49) edge (50) [dashed]
			(50) edge (51)
			(51) edge (52)
			(52) edge (49)
			;
			
			%the arrow 
			\vertex (200) at (9.7,-1.1) [scale=.04,fill=black] {};
			\vertex (201) at (11.7,-1.1) [scale=.012] {};
			\draw[thick][->,>=latex](200)--(201);
			
			%the right graph    
			\vertex (61) at (14.5,0) [scale=.75,fill=black] {};
			\node ($x$) at (14,-0.5) {$x$};
			\vertex (62) at (14.5,-2) [scale=.75,fill=lightgray] {};
			\vertex (63) at (13.5,-2.8) [scale=.75] {};
			\vertex (64) at (13.5,-3.6) [scale=.75,fill=lightgray] {};
			\vertex (65) at (13.5,-4.4) [scale=.75,fill=black] {};
			\vertex (66) at (15.5,-4.4) [scale=.75,fill=black] {};
			\vertex (67) at (15.5,-3.6) [scale=.75,fill=lightgray] {};
			\vertex (68) at (15.5,-2.8) [scale=.75] {};
			\vertex (112) at (13.5,-5.2) [scale=.75,fill=lightgray] {};
			\vertex (115) at (15.5,-5.2) [scale=.75,fill=lightgray] {};
			\vertex (113) at (14,-6) [scale=.75] {};
			\vertex (114) at (15,-6) [scale=.75] {};

			\path
			(61) edge (62)
			(62) edge (63)
			(63) edge (64)
			(64) edge (65)
			(65) edge (112)
			(112) edge (113)
			(113) edge (114)
			(114) edge (115)
			(115) edge (66)
			(66) edge (67)
			(67) edge (68)
			(62) edge (68)
			;
			
			\vertex (86) at (14.5,2) [scale=.75,fill=lightgray] {};
			\vertex (87) at (13.8,3) [scale=.75] {};
			\vertex (88) at (15.2,3) [scale=.75] {};
			
			\path
			(61) edge (86)
			(86) edge (87)
			(87) edge (88)
			(86) edge (88)
			;
			
			\vertex (89) at (17.5,0) [scale=.75,fill=black] {};
			\node ($v$) at (17.2,-0.5) {$v$};
			\vertex (90) at (20,3) [scale=.75,fill=lightgray] {};
			\vertex (91) at (20,1.5) [scale=.75,fill=lightgray] {};
			\vertex (92) at (20,0) [scale=.75,fill=lightgray] {};
			\vertex (93) at (20,-1.1) [scale=.5,fill=black] {};
			\vertex (94) at (20,-1.8) [scale=.5,fill=black] {};
			\vertex (95) at (20,-2.5) [scale=.5,fill=black] {};
			\vertex (96) at (20,-3.6) [scale=.75,fill=lightgray] {};
			
			\path
			(89) edge (90)
			(89) edge (91)
			(89) edge (92)
			(89) edge (96)
			(61)  edge (89)
			;
			
			\vertex (97) at (12.8,0) [scale=.05] {};
			\vertex (98) at (21.7,3) [scale=.02] {};
			\vertex (99) at (22,1.5) [scale=.05] {};
			\vertex (100) at (21.7,0) [scale=.02] {};
			\vertex (301) at (21.65,-2.2) [scale=.05] {};
			\vertex (402) at (21.8,2.4) [scale=.05,fill=black] {};
			
			\path
			(90) edge (98) [dashed]
			(92) edge (100)
			(61)  edge (97)
			(88) edge (90)
			(61)  edge (90)
			(67) edge (96)
			(62) edge (96)
			(96) edge (301)
			(91) edge (402)
			;
			
			\draw (23.5,1.5) ellipse (1.5 and 1);
			\node [scale=0.8] ($H$) at (23.5,1.5) {$H \in \mathcal{H}_g$};
			\path
			(91) edge (99)
			;
			
			\vertex (101) at (22,-3.6) [scale=.75] {};
			\vertex (102) at (22.8,-4.6) [scale=.75,fill=lightgray] {};
			\vertex (103) at (23.8,-4.6) [scale=.75,fill=black] {};
			\vertex (104) at (24.8,-4.1) [scale=.75,fill=lightgray] {};
			\vertex (105) at (24.8,-3.1) [scale=.75,fill=lightgray] {};
			\vertex (106) at (23.8,-2.6) [scale=.75,fill=black] {};
			\vertex (107) at (22.8,-2.6) [scale=.75,fill=lightgray] {};
			
			\path
			(96) edge (101)
			(101) edge (102)
			(102) edge (103)
			(103) edge (104)
			(104) edge (105)
			(105)  edge (106)
			(106) edge (107)
			(107) edge (101)
			;
			
			%rectangle
			\vertex (109) at (12.5,3.8) [scale=.05] {};
			\vertex (110) at (16.5,3.8) [scale=.05] {};
			\vertex (111) at (16.5,-6.8) [scale=.05] {};
			\vertex (112) at (12.5,-6.8) [scale=.05] {};
			
			\path
			(109) edge (110) [dashed]
			(110) edge (111)
			(111) edge (112)
			(112) edge (109)
			;   
			
			\draw (4.65,-0.06)..controls (5.5,-0.5) and (8.2,-1.1) ..(9.3,-2.95)[dashed];
			\draw (20.15,-0.06)..controls (21,-0.5) and (23.7,-1.1) ..(24.8,-2.95)[dashed];
			
		\end{tikzpicture}
	\end{center}
	\vspace{1.5mm}
	\par {\footnotesize \centerline{{\bf Fig. 8.} ~The sets $D_X$ and $D_X''$ where $H_1 \cong C_3$ and $H_2 \cong C_{11}$.\hypertarget{Fig8}}}
\end{figure}

    Recall that $|\mathcal{H}_b| \leq \Delta -1<\Delta +1$. By Lemmas \ref{Lemma2.2} and \ref{Lemma2.3}, and by the induction hypothesis, we have
    $$\begin{aligned}
        \iota_1(G)& \leq |D_X''|+\iota_1(G-X) = |D_X|-1+ \sum_{H \in \mathcal{H}_g} \iota_1(H)\\
                  &\leq  \frac{n}{4}+ \frac{1}{4}(k_3+k_7+k_{11}-\Delta +3)-1=  \frac{n}{4}+ \frac{1}{4}(|\mathcal{H}_b|- \Delta -1) < \frac{n}{4}.             
    \end{aligned}$$

\vspace{3mm}

Now we fix $x\in N(v)$ with the property that there exists some $H^* \in \mathcal{H}_b$ with $x \in N(H^*)$. Let $\mathcal{H}_g^x$ be the set of components $H$ of $\mathcal{H}_g$ with $N(H) = \{x\}$. By the induction hypothesis, $\iota_1(H) \leq \frac{1}{4}|V(H)|$ for any component $H \in \mathcal{H}_g^x \subseteq \mathcal{H}_g$.

\vspace{3mm}

{\bf Case 2.} $\Delta=3$.

\vspace{3mm}

Let $X=V(H^*)  \cup  \{x\}$. Then $G-X= G_v \cup \bigcup_{H \in \mathcal{H}_g^x}H$, where $G_v$ is the component of $G-X$ containing $v$. Let $Y=X \cup V(G_v)$. Then $G-Y=\bigcup_{H \in \mathcal{H}_g^x}H$. Let $v_{d}$, $v_{d}'$ be the vertices distance $d$ from $v$ in $G_v$. Let $xy \in E(G)$ for some $y \in V(H^*)$, and let $y_{d}$, $y_{d}'$ be the vertices distance $d$ from $y$ in $H^*$ if $H^* \in \{C_3,C_7,C_{11}\}$.

\emph{Subcase 2.1.} $H^* \cong C_3$. Clearly, $E(\{y_1,y_1'\},\{v_1,v_1'\}) \neq \emptyset$, by $|N(H^*)| \geq 2$ and $d(y) = \Delta = 3$. Assume that $y_1v_1 \in E(G)$. We now consider the structure of $G_v$.

\emph{Subcase 2.1.1.} $G_v \notin \mathcal{S}$. Clearly, $\{y\}$ is a 1-isolating set of $G[X]$, and $E(X \setminus N[y], V \setminus X) = \emptyset$. By Lemmas \ref{Lemma2.2} and \ref{Lemma2.3}, and by the induction hypothesis, $\iota_1(G) \leq |\{y\}|+ \iota_1(G-X) = 1 + \iota_1(G_v) + \sum_{H \in \mathcal{H}_g^x}\iota_1(H) \leq 1+ \frac{1}{4}(n-4) = \frac{n}{4}$. 

\emph{Subcase 2.1.2.} $G_v \cong P_3$. Note that $y_1yxvv_1y_1$ is a 5-cycle in $G$. Since $G$ contains no induced 5-cycles, $xv_1 \in E(G)$ and $d(x)=\Delta=3$. Thus, $\mathcal{H}_g^x = \emptyset$ and $G=G[Y]$. If $y_1'v_1' \in E(G)$, then $y_1'v_1'vxyy_1'$ is an induced 5-cycle. Let $y_1'v_1' \notin E(G)$. Then, $\{x\}$ is a 1-isolating set of $G$, and $\iota_1(G) \leq |\{x\}|=1 < \frac{7}{4} = \frac{n}{4}$. 

\emph{Subcase 2.1.3.} $G_v \in \{C_3,C_7,C_{11}\}$. Since $y_1v_1 \in E(G)$ and $d(y_1)=d(y)=d(v)=d(v_1)=\Delta=3$, $yy_1v_1vxy$ is an induced 5-cycle in $G$, a contradiction. 

\emph{Subcase 2.2.} $H^* \cong P_3$. We consider the degree of $y$ in $H^*$.

\emph{Subcase 2.2.1.} $d_{H^*}(y)$=2. Let $N_{H^*}(y) = \{y_1,y_1'\}$. Since $|N(H^*)| \geq 2$ and $d(y)=\Delta=3$, $E(\{y_1,y_1'\},\{v_1,v_1'\}) \neq \emptyset$. Assume that $y_1v_1 \in E(G)$.  We further consider the structure of $G_v$. 

(i) $G_v \notin \mathcal{S}$. Clearly, $\{y\}$ is a 1-isolating set of $G[X]$, and $E(X \setminus N[y], V \setminus X) = \emptyset$. By Lemmas \ref{Lemma2.2} and \ref{Lemma2.3}, and by the induction hypothesis, $\iota_1(G) \leq |\{y\}|+ \iota_1(G-X) = 1 + \iota_1(G_v) + \sum_{H \in \mathcal{H}_g^x}\iota_1(H) \leq 1+ \frac{1}{4}(n-4) = \frac{n}{4}$. 

%By the non-existence of induced 5-cycles 

(ii) $G_v \cong P_3$. Note that $y_1yxvv_1y_1$ can not be an induced 5-cycle in $G$. Clearly, $y_1x \in E(G)$ or $v_1x \in E(G)$, and $d(x)=\Delta=3$. Thus, $\mathcal{H}_g^x = \emptyset$ and $G=G[Y]$. Since $G$ contains no induced 5-cycles, $G-N[x]$ consists of three isolated vertices. This implies that, $\{x\}$ is a 1-isolating set of $G$, and $\iota_1(G) \leq |\{x\}|=1 < \frac{7}{4} = \frac{n}{4}$.

(ii) $G_v \in \{C_3, C_7, C_{11}\}$. Since $d(v_1)=\Delta=3$ and $y_1yxvv_1y_1$ is not an induced 5-cycle in $G$, $y_1x \in E(G)$ and $d(x)=\Delta=3$. Thus, $\mathcal{H}_g^x = \emptyset$ and $G=G[Y]$. Take 
\begin{displaymath}
    D = \left\{
        \begin{array}{ll}
        \{x\},& \text{if}\ G_v \cong C_3,\\
        \{x, v_3\},& \text{if}\  G_v \cong C_7,\\
        \{x, v_3, v_3'\},& \text{if}\  G_v \cong C_{11}.\\
        \end{array} \right.
\end{displaymath}
If $y_1'v_1' \in E(G)$, then $yy_1'v_1'vxy$ is an induced 5-cycle in $G$, a contradiction. Hence, $D$ is a 1-isolating set of $G$, and $\iota_1(G) \leq |D|= \frac{1}{4}(n-3) < \frac{n}{4}$.

\emph{Subcase 2.2.2.} $d_{H^*}(y)$=1. Let $N_{H^*}(y_1)=\{y,y_2\}$. Note that $d_{H^*}(y_1)=2$ and $|N(H^*)| \geq 2$. If $y_1v' \in E(G)$ for some $v' \in N(v)$, then regarding $v'$ and $y_1$ separately as $x$ and $y$, this subcase can come down to Subcase 2.2.1. So, let $N(y_1) \cap N(v) = \emptyset$. We further consider whether $N(y_2) \cap \{v_1,v_1'\}= \emptyset$ or not.

(i) $N(y_2) \cap \{v_1,v_1'\}= \emptyset$. Note that $\{y\}$ is a 1-isolating set of $G[X]$. If $G_v$ is not an $\mathcal{S}$-graph, then by Lemmas \ref{Lemma2.2} and \ref{Lemma2.3}, and by the induction hypothesis, $\iota_1(G) \leq |\{y\}|+ \iota_1(G-X) \leq 1+\frac{1}{4}(n-4)=\frac{n}{4}$. If $G_v$ is an $\mathcal{S}$-graph, then take
    \begin{displaymath}
        D = \left\{
        \begin{array}{ll}
        \{x\},& \text{if}\ G_v\in \{P_3, C_3\},\\
        \{x, v_3\},& \text{if}\ G_v\cong C_{7},\\
        \{x, v_3,v_3'\},& \text{if}\ G_v\cong C_{11}.\\
        \end{array} \right.
    \end{displaymath}
    Clearly, $D$ is a 1-isolating set of $G[Y]$. By Lemma \ref{Lemma2.2} and the induction hypothesis,  $\iota_1(G) \leq |D|+ \iota_1(G-Y) = \frac{1}{4}(|Y|-3) + \sum_{H \in \mathcal{H}_g^x}\iota_1(H) \leq \frac{1}{4}(|Y|-3) + \frac{1}{4}(n-|Y|) < \frac{n}{4}$. 

    (ii) $N(y_2) \cap \{v_1,v_1'\} \neq \emptyset$. Without loss of generality, we may assume that $y_2v_1 \in E(G)$. If $xv_1 \in E(G)$, then $xv_1y_2y_1yx$ is an induced 5-cycle of $G$. Let $xv_1 \notin E(G)$. Since $yy_1y_2v_1vxy$ is not an induced 6-cycle in $G$, $\emptyset \neq \{xy_2,v_1y\} \subset E(G)$.
    
%    $xv_1 \in E(G)$ or $xy_2 \in E(G)$ or $v_1y \in E(G)$.

    If $xy_2 \in E(G)$, then $d(x)=\Delta=3$. Thus, $\mathcal{H}_g^x = \emptyset$ and $G=G[Y]$. Clearly, $\{x\}$ is a 1-isolating set of $G[X]$ and $E(X \setminus N[x],V \setminus X)=\emptyset$. If $G_v$ is not an $\mathcal{S}$-graph, then by Lemma \ref{Lemma2.2} and the induction hypothesis, $\iota_1(G) \leq |\{x\}|+ \iota_1(G-X) \leq 1+\frac{1}{4}(n-4)=\frac{n}{4}$. Let $D$ be the set defined as in (i). If $G_v$ is an $\mathcal{S}$-graph, then $D$ is a 1-isolating set of $G$. Hence, $\iota_1(G) \leq |D| = \frac{1}{4}(n-3) < \frac{n}{4}$.
    
    Let $xy_2 \notin E(G)$. Then $v_1y \in E(G)$. Recall that $v_1y_2 \in E(G)$. Regarding $v_1$ as $x$, this subcase can come down to the subcase of $xy_2 \in E(G)$ above.

    %If $y_2x \notin E(G)$, then $yv_1 \in E(G)$ and $d(v_1)=d(y)=3$. Since $G$ contains no induced 5 or 6 cycles, $y_2v_1 \notin E(G)$. So, $\{y\}$ is a 1-isolating set of $G[X]$. 
    %If $G_v$ is not an $\mathcal{S}$-graph, then by the induction hypothesis, $\iota_1(G) \leq |\{y\}|+ \iota_1(G-X) \leq 1+\frac{1}{4}(n-4)=\frac{n}{4}$. If $G_v$ is an $\mathcal{S}$-graph, then since $ N(v_1) = \{y,y_2,v\}$, $G_v \cong P_3$. Hence, $\{y\}$ is a 1-isolating set of $G[Y]$. By the induction hypothesis, $\iota_1(G) \leq |\{y\}|+ \iota_1(G-Y) \leq 1+\frac{1}{4}(n-7) <\frac{n}{4}$.

\emph{Subcase 2.3.} $H^* \cong C_7$. Since $G$ contains no induced 5- and 6-cycles, we determine $E(\{y_2,y_2'\},\{v_1,v_1'\}) = \emptyset$.

\emph{Subcase 2.3.1.} $E(\{y_1,y_1'\},\{v_1,v_1'\}) \neq \emptyset$. Assume that $y_1v_1 \in E(G)$. Then, $y_1yxvv_1y_1$ is a 5-cycle in $G$, implying that $xv_1 \in E(G)$. Since $d(x)=\Delta=3$, $\mathcal{H}_g^x = \emptyset$ and $G=G[Y]$. If $y_1'v_1' \in E(G)$, then $yy_1'v_1'vxy$ is an induced 5-cycle in $G$. Hence, $\{x,y_3\}$ is a 1-isolating set of $G[X]$. If $G_v \notin \mathcal{S}$, then by Lemma \ref{Lemma2.2} and the induction hypothesis, $\iota_1(G) \leq |\{x,y_3\}|+\iota_1(G-X) \leq 2+\frac{1}{4}(n-8)=\frac{n}{4}$. If $G_v \in \mathcal{S}$, then since $d(v_1)=\Delta=3$, $G_v \cong P_3$. Clearly, $\{x,y_3\}$ is a 1-isolating set of $G$, and $\iota_1(G) \leq |\{x,y_3\}|= 2 < \frac{11}{4} = \frac{n}{4}$.

\emph{Subcase 2.3.2.} $E(\{y_1,y_1'\},\{v_1,v_1'\}) = \emptyset$. If $G_v \notin \mathcal{S}$, then $\{x,y_3\}$ is a 1-isolating set of $G[X]$. By Lemmas \ref{Lemma2.2} and \ref{Lemma2.3}, and by the induction hypothesis, $\iota_1(G) \leq |\{x,y_3\}|+\iota_1(G-X) \leq 2+\frac{1}{4}(n-8)=\frac{n}{4}$. If $G_v \in \mathcal{S}$, then take 
\begin{displaymath}
    D = \left\{
    \begin{array}{ll}
    \{x, y_3\},& \text{if}\ G_v\in \{P_3, C_3\},\\
    \{x, y_3, v_3\},& \text{if}\ G_v\cong C_{7},\\
    \{x, y_3, v_3,v_3'\},& \text{if}\ G_v\cong C_{11}.\\
    \end{array} \right.
\end{displaymath}
Clearly, $D$ is a 1-isolating set of $G[Y]$. By Lemma \ref{Lemma2.2} and the induction hypothesis, $\iota_1(G) \leq |D|+ \iota_1(G-Y) = |D|+ \sum_{H \in \mathcal{H}_g^x} \iota_1(H) \leq \frac{1}{4}(|Y|-3) + \frac{1}{4}(n-|Y|) <\frac{n}{4}$.

\emph{Subcase 2.4.} $H^* \cong C_{11}$. Since $G$ contains no induced 5- and 6-cycles, we determine $E(\{y_2,y_2'\},\{v_1,v_1'\}) = \emptyset$. 

\emph{Subcase 2.4.1.} $E(\{y_1,y_1'\},\{v_1,v_1'\}) \neq \emptyset$. Assume that $y_1v_1 \in E(G)$. Since $G$ contains no induced 5-cycles, $v_1x \in E(G)$ and $y_1'v_1' \notin E(G)$. Thus, $d(x)=d(v_1)=\Delta=3$, and $\mathcal{H}_g^x = \emptyset$ and $G=G[Y]$. Clearly, $\{y,y_4,y_4'\}$ is a 1-isolating set of $G[X]$. If $G_v \notin \mathcal{S}$, then by Lemma \ref{Lemma2.2} and the induction hypothesis, $\iota_1(G) \leq |\{y,y_4,y_4'\}|+\iota_1(G-X) \leq 3+\frac{1}{4}(n-12)=\frac{n}{4}$. If $G_v \in \mathcal{S}$, then $G_v \cong P_3$. Note that $\{x,y_4,y_4'\}$ is a 1-isolating set of $G$. Hence, $\iota_1(G) \leq |\{x,y_4,y_4'\}| = 3 < \frac{15}{4} = \frac{n}{4}$.

\emph{Subcase 2.4.2.} $E(\{y_1,y_1'\},\{v_1,v_1'\}) = \emptyset$. If $G_v \notin \mathcal{S}$, then $\{y,y_4,y_4'\}$ is a 1-isolating set of $G[X]$. By Lemmas \ref{Lemma2.2} and \ref{Lemma2.3}, and by the induction hypothesis, $\iota_1(G) \leq |\{y,y_4,y_4'\}|+\iota_1(G-X) \leq 3+\frac{1}{4}(n-12)=\frac{n}{4}$. If $G_v \in \mathcal{S}$, then take 
\begin{displaymath}
    D = \left\{
    \begin{array}{ll}
    \{x, y_4,y_4'\},& \text{if}\ G_v\in \{P_3, C_3\},\\
    \{x, y_4,y_4', v_3\},& \text{if}\ G_v\cong C_{7},\\
    \{x, y_4,y_4', v_3,v_3'\},& \text{if}\ G_v\cong C_{11}.\\
    \end{array} \right.
\end{displaymath}
Clearly, $D$ is a 1-isolating set of $G[Y]$. By Lemma \ref{Lemma2.2} and the induction hypothesis, $\iota_1(G) \leq |D|+ \iota_1(G-Y) = \frac{1}{4}(|Y|-3) +  \sum_{H \in \mathcal{H}_g^x}\iota_1(H)   \leq \frac{1}{4}(|Y|-3) + \frac{1}{4}(n-|Y|) <\frac{n}{4}$.

\vspace{3mm}

{\bf Case 3.} $\Delta=4$

\vspace{3mm}

Let $X=V(H^*) \cup \{x\}$. Then $G-X= G_v \cup \bigcup_{H \in \mathcal{H}_g^x}H$, where $G_v$ is the component of $G-X$ containing $v$. Since $d(v)=\Delta=4$, $d_{G_v}(v)=|N(v) \setminus \{x\}|=3$. It follows that $G_v$ is not an $\mathcal{S}$-graph. Set $N(v) \setminus \{x\} = \{x_1,x_2,x_3\}$.  Let $xy \in E(G)$ for some $y \in V(H^*)$, and let $y_{d}$, $y_{d}'$ be the two vertices distance $d$ from $y$ in $H^*$ if $H^* \in \{C_3,C_7,C_{11}\}$. We distinguish the following proof into three subcases in terms of the structure of $H^*$.

\emph{Subcase 3.1.} $H^* \in \{P_3,C_3\}$. We further consider the degree of $y$ in $H^*$.

\emph{Subcase 3.1.1.} $d_{H^*}(y)=2$. Clearly, $\{y\}$ is a 1-isolating set of $G[X]$, and $E(X \setminus N[y], V \setminus X) = \emptyset$. By Lemmas \ref{Lemma2.2} and \ref{Lemma2.3}, and by the induction hypothesis, $\iota_1(G) \leq |\{y\}|+\iota_1(G-X) \leq 1+\frac{1}{4}(n-4)=\frac{n}{4}$. 

\emph{Subcase 3.1.2.} $d_{H^*}(y)=1$. It follows that $H^* \cong P_3$. Let $N_{H^*}(y_1) = \{y,y_2\}$. Note that $d_{H^*}(y_1)=2$ and $|N(H^*)| \geq 2$. If $y_1x' \in E(G)$ for some $x' \in N(v)$, then regarding $x'$ and $y_1$ separately as $x$ and $y$, this subcase can come down to Subcase 3.1.1. So, let $N(y_1) \cap N(v) = \emptyset$. We now consider whether $y_2$ is adjacent to some vertices in $\{x_1,x_2,x_3\}$ or not.

Assume that $N(y_2) \cap \{x_1,x_2,x_3\} = \emptyset$. Clearly, $\{y\}$ is a 1-isolating set of $G[X]$, and $E(X \setminus N[y], V \setminus X) = \emptyset$. By Lemmas \ref{Lemma2.2} and \ref{Lemma2.3}, and by the induction hypothesis, $\iota_1(G) \leq |\{y\}| + \iota_1(G-X) \leq 1+ \frac{1}{4}(n-4) =\frac{n}{4}$. 

Assume that $N(y_2) \cap \{x_1,x_2,x_3\} \neq \emptyset$ and  $y_2x_1 \in E(G)$. Clearly, $yy_1y_2x_1vxy$ is a 6-cycle in $G$. Since $G$ contains no induced 5- and 6-cycles, $xy_2 \in E(G)$ or $x_1y \in E(G)$. If $xy_2 \in E(G)$, then $\{x\}$ is a 1-isolating set of $G[X]$. By Lemmas \ref{Lemma2.2} and \ref{Lemma2.3}, and by the induction hypothesis, $\iota_1(G) \leq |\{x\}|+\iota_1(G-X) \leq 1+\frac{1}{4}(n-4)=\frac{n}{4}$. If $xy_2 \notin E(G)$, then $x_1y \in E(G)$. Note that $x_1y_2 \in E(G)$. Regarding $x_1$ as $x$, this subcase can come down to the subcase of $xy_2 \in E(G)$ above.

\emph{Subcase 3.2.} $H^* \cong C_7$. If $N(y_2) \cap \{x_1,x_2,x_3\} = \emptyset$, then $\{y,y_3'\}$ is a 1-isolating set of $G[X]$. By Lemmas \ref{Lemma2.2} and \ref{Lemma2.3}, and by the induction hypothesis, $\iota_1(G) \leq |\{y,y_3'\}|+ \iota_1(G-X) \leq 2+\frac{1}{4}(n-8)=\frac{n}{4}$. Assume that $N(y_2) \cap \{x_1,x_2,x_3\} \neq \emptyset$ and $y_2x_1 \in E(G)$. Then $x_1y_2y_1yxvx_1$ is a 6-cycle in $G$. Since $G$ contains no induced 5- and 6-cycles, we derive that $x_1y \in E(G)$, or $xy_2 \in E(G)$, or $y_1x \in E(G)$ and $y_1x_1 \in E(G)$, or $xy_1 \in E(G)$ and $xx_1 \in E(G)$, or $x_1y_1 \in E(G)$ and $x_1x \in E(G)$.

\emph{Subcase 3.2.1.} $x_1y \in E(G)$. Assume that $N(y_2') \cap \{x_2,x_3\} = \emptyset$. Clearly, $D=\{y,y_3\}$ is a 1-isolating set of $G[X]$. Particularly, since $x_1 \in N[D]$, $x_1y_2' \in E(G)$ does not matter here. By Lemmas \ref{Lemma2.2} and \ref{Lemma2.3}, and by the induction hypothesis, $\iota_1(G) \leq |D|+ \iota_1(G-X) \leq 2+ \frac{1}{4}(n-8) = \frac{n}{4}$.

Assume that $N(y_2') \cap \{x_2,x_3\} \neq \emptyset$. Without loss of generality, let $y_2'x_2 \in E(G)$. Clearly, $yy_1'y_2'x_2vx_1y$ is a 6-cycle in $G$. Since $G$ does not contain induced 5- and 6-cycles, we derive that $x_1y_2' \in E(G)$, or $y_1'x_1 \in E(G)$ and $y_1'x_2 \in E(G)$, or $x_2y_1' \in E(G)$ and $x_2x_1 \in E(G)$. For any subcase, $d(x_1)=\Delta=4$. If $x_1y_2' \in E(G)$, then $x_1y_2y_3y_3'y_2'x_1$ is an induced 5-cycle in $G$. If $y_1'x_1 \in E(G)$, then $x_1y_2y_3y_3'y_2'y_1'x_1$ is an induced 6-cycle in $G$. Hence, $x_2y_1' \in E(G)$ and $x_2x_1 \in E(G)$. However, now $d(x_1)=d(x_2)=\Delta=4$, and $x_1y_2y_3y_3'y_2'x_2x_1$ is an induced 6-cycle in $G$.

\emph{Subcase 3.2.2.} $xy_2 \in E(G)$. We relabel the vertices of $G$ as follows: $y_2=y, y=y_2, y_3=y_1', y_3'=y_2', y_2'=y_3', y_1'=y_3, x=x_1$, and $x_1=x$. See Fig. \hyperlink{Fig9}{9} for an illustration of this procedure. Thus, this subcase comes down to Subcase 3.2.1.

\begin{figure}[h!]
    \begin{center}
    \begin{tikzpicture}[scale=.48]
    \tikzstyle{vertex}=[circle, draw, inner sep=0pt, minimum size=6pt]
    \tikzset{vertexStyle/.append style={rectangle}}
        \vertex (1) at (-0,0) [scale=.75] {};
        \node ($x$) [scale=0.9] at (0.2,-0.6) {$x$};
        \vertex (2) at (-2,0) [scale=.75] {};
        \node ($y$) [scale=0.9] at (-1.8,-0.6) {$y$};
        \vertex (3) at (-3,1.2) [scale=.75] {};
        \node ($y_1$) [scale=0.9] at (-2.3,1.4) {$y_1$};
        \vertex (4) at (-4.5,1.2) [scale=.75] {};
        \node ($y_2$) [scale=0.9] at (-5.2,1.4) {$y_2$};
        \vertex (5) at (-5.6,0.5) [scale=.75] {};
        \node ($y_3$) [scale=0.9] at (-6.4,0.5) {$y_3$};
        \vertex (6) at (-5.6,-0.5) [scale=.75] {};
        \node ($y_3'$) [scale=0.9] at (-6.4,-0.5) {$y_3'$};
        \vertex (7) at (-4.5,-1.2) [scale=.75] {};
        \node ($y_2'$) [scale=0.9] at (-5.1,-1.7) {$y_2'$};
        \vertex (8) at (-3,-1.2) [scale=.75] {};
        \node ($y_1'$) [scale=0.9] at (-2.5,-1.7) {$y_1'$};
        \vertex (9) at (2,0) [scale=.75] {};
        \node ($v$) [scale=0.9] at (2.4,-0.6) {$v$};
        \vertex (10) at (2,2) [scale=.75] {};
        \node ($x_1$) [scale=0.9] at (2.8,2) {$x_1$};
        \vertex (11) at (4,0) [scale=.75] {};
        \node ($x_3$) [scale=0.9] at (4.4,-0.6) {$x_3$};
        \vertex (12) at (2,-2) [scale=.75] {};
        \node ($x_2$) [scale=0.9] at (2.8,-2) {$x_2$};

        \path
        (1) edge (2)
        (2) edge (3)
        (3) edge (4)
        (4) edge (5)
        (5) edge (6)
        (6) edge (7)
        (7) edge (8)
        (8) edge (2)
        (1) edge (9)
        (9) edge (10)
        (9) edge (11)
        (9) edge (12)
        ;

        \draw (-4.4,1.32)..controls (-1.5,3.3) and (-0.2,0.1) ..(-0.1,0.12);

        \draw (-4.45,1.35)..controls (-1.5,4) and (1.8,2.2) ..(1.9,2.12);

        %the arrow 
        \vertex (200) at (6,0) [scale=.04,fill=black] {};
        \vertex (201) at (8,0) [scale=.01] {};
       \draw[thick][->,>=latex](200)--(201);

       \vertex (51) at (16,0) [scale=.75] {};
       \node ($x$) [scale=0.9] at (16.2,-0.6) {$x$};
       \vertex (52) at (14,0) [scale=.75] {};
       \node ($y$) [scale=0.9] at (14.2,-0.6) {$y$};
       \vertex (53) at (13,1.2) [scale=.75] {};
       \node ($y_1$) [scale=0.9] at (13.7,1.4) {$y_1$};
       \vertex (54) at (11.5,1.2) [scale=.75] {};
       \node ($y_2$) [scale=0.9] at (10.8,1.4) {$y_2$};
       \vertex (55) at (10.4,0.5) [scale=.75] {};
       \node ($y_3$) [scale=0.9] at (9.6,0.5) {$y_3$};
       \vertex (56) at (10.4,-0.5) [scale=.75] {};
       \node ($y_3'$) [scale=0.9] at (9.6,-0.5) {$y_3'$};
       \vertex (57) at (11.5,-1.2) [scale=.75] {};
       \node ($y_2'$) [scale=0.9] at (10.9,-1.7) {$y_2'$};
       \vertex (58) at (13,-1.2) [scale=.75] {};
       \node ($y_1'$) [scale=0.9] at (13.5,-1.7) {$y_1'$};
       \vertex (59) at (18,0) [scale=.75] {};
       \node ($v$) [scale=0.9] at (18.4,-0.6) {$v$};
       \vertex (60) at (18,2) [scale=.75] {};
       \node ($x_1$) [scale=0.9] at (18.8,2) {$x_1$};
       \vertex (61) at (20,0) [scale=.75] {};
       \node ($x_3$) [scale=0.9] at (20.4,-0.6) {$x_3$};
       \vertex (62) at (18,-2) [scale=.75] {};
       \node ($x_2$) [scale=0.9] at (18.8,-2) {$x_2$};

       \path
       (51) edge (52)
       (52) edge (53)
       (53) edge (54)
       (54) edge (55)
       (55) edge (56)
       (56) edge (57)
       (57) edge (58)
       (58) edge (52)
       (51) edge (59)
       (59) edge (60)
       (59) edge (61)
       (59) edge (62)
       ;

       \draw (14.1,0.1)..controls (15.7,2.5) and (17.8,2) ..(17.8,2.05);
       \draw (11.55,1.35)..controls (14.5,4) and (17.8,2.2) ..(17.9,2.12);
    \end{tikzpicture}
\end{center}
\vspace{1.5mm}
\par {\footnotesize \centerline{{\bf Fig. 9.} ~The subcases that $xy_2 \in E(G)$ and $x_1y \in E(G)$. \hypertarget{Fig9}}}
\end{figure}

\emph{Subcase 3.2.3.} $y_1x \in E(G)$ and $y_1x_1 \in E(G)$. Assume that $N(y_1') \cap \{x_2,x_3\} = \emptyset$. Clearly, $\{y_1, y_3'\}$ is a 1-isolating set of $G[X]$. Remark that $x_1y_1' \in E(G)$ does not work here. By Lemmas \ref{Lemma2.2} and \ref{Lemma2.3}, and by the induction hypothesis, $\iota_1(G) \leq |\{y_1, y_3'\}| + \iota_1(G-X) \leq 2+ \frac{1}{4}(n-8) = \frac{n}{4}$.

Assume that $N(y_1') \cap \{x_2,x_3\} \neq \emptyset$ and $y_1'x_2 \in E(G)$. Since $yy_1'x_2vxy$ is not an induced 5-cycle in $G$, we derive that $x_2y \in E(G)$, or $y_1'x \in E(G)$, or $x_2x \in E(G)$. If $x_2y \in E(G)$, then $y_1yx_2vx_1y_1$ is a 5-cycle in $G$, implying $x_1x_2 \in E(G)$. Take $D=\{y_1,y_2'\}$ if $y_3x_3 \notin E(G)$, and $D=\{y,y_3\}$ if $y_3x_3 \in E(G)$. Clearly, $D$ is a 1-isolating set of $G[X]$. By Lemmas \ref{Lemma2.2} and \ref{Lemma2.3}, and by the induction hypothesis, $\iota_1(G) \leq |D| + \iota_1(G-X) \leq 2+ \frac{1}{4}(n-8) = \frac{n}{4}$. If $y_1'x \in E(G)$, then $y_1yy_1'x_2vx_1y_1$ is a 6-cycle in $G$, implying $x_2y,x_2x_1 \in E(G)$. However, $y_1xy_1'x_2x_1y_1$ is an induced 5-cycle in $G$. Hence, $x_2y,y_1'x \notin E(G)$ and $x_2x \in E(G)$. Since $x_1y_1yy_1'x_2vx_1$ is a 6-cycle in $G$, $x_1y_1' \in E(G)$. However, $x_1y_2y_3y_3'y_2'y_1'x_1$ is an induced 6-cycle in $G$.

\emph{Subcase 3.2.4.} $xy_1 \in E(G)$ and $xx_1 \in E(G)$. If $y_1'x_1 \in E(G)$, then $y_2y_1yy_1'x_1y_2$ is an induced 5-cycle in $G$. So, let $y_1'x_1 \notin E(G)$. Assume that $N(y_1') \cap \{x_2,x_3\} \neq \emptyset$ and $y_1'x_2 \in E(G)$. Then, $y_1'x_2vxyy_1'$ is a 5-cycle in $G$. Since $G$ contains no induced 5-cycles and $d(x) = d(v) = \Delta = 4$, $x_2y \in E(G)$. Clearly, $y_2y_1yx_2vx_1y_2$ is a 6-cycle in $G$, and $x_2y_2 \in E(G)$. However, by $d(x_2)= \Delta=4$, $x_2y_2y_3y_3'y_2'y_1'x_2$ is an induced 6-cycle in $G$. Let $y_1'x_2 \notin E(G)$. By the symmetry of $x_2$ and $x_3$, $y_1'x_3 \notin E(G)$ and $N(y_1') \cap \{x_1,x_2,x_3\} = \emptyset$. Note that $\{y_1,y_3'\}$ is a 1-isolating set of $G[X]$. By Lemma \ref{Lemma2.2} and the induction hypothesis, $\iota_1(G) \leq |\{y_1,y_3'\}|+ \iota_1(G-X) \leq 2+\frac{1}{4}(n-8) = \frac{n}{4}$.

\emph{Subcase 3.2.5.} $x_1y_1 \in E(G)$ and $x_1x \in E(G)$.
We relabel the vertices of $G$ as follows: $y_2=y, y=y_2, y_3=y_1', y_3'=y_2', y_2'=y_3', y_1'=y_3, x=x_1$ and $x_1=x$. As an illustration in Fig. \hyperlink{Fig10}{10}, this subcase can come down to Subcase 3.2.4.

\begin{figure}[h!]
    \begin{center}
    \begin{tikzpicture}[scale=.48]
    \tikzstyle{vertex}=[circle, draw, inner sep=0pt, minimum size=6pt]
    \tikzset{vertexStyle/.append style={rectangle}}
        \vertex (1) at (-0,0) [scale=.75] {};
        \node ($x$) [scale=0.9] at (0.2,-0.6) {$x$};
        \vertex (2) at (-2,0) [scale=.75] {};
        \node ($y$) [scale=0.9] at (-1.8,-0.6) {$y$};
        \vertex (3) at (-3,1.2) [scale=.75] {};
        \node ($y_1$) [scale=0.9] at (-2.2,1.1) {$y_1$};
        \vertex (4) at (-4.5,1.2) [scale=.75] {};
        \node ($y_2$) [scale=0.9] at (-5.2,1.4) {$y_2$};
        \vertex (5) at (-5.6,0.5) [scale=.75] {};
        \node ($y_3$) [scale=0.9] at (-6.4,0.5) {$y_3$};
        \vertex (6) at (-5.6,-0.5) [scale=.75] {};
        \node ($y_3'$) [scale=0.9] at (-6.4,-0.5) {$y_3'$};
        \vertex (7) at (-4.5,-1.2) [scale=.75] {};
        \node ($y_2'$) [scale=0.9] at (-5.1,-1.7) {$y_2'$};
        \vertex (8) at (-3,-1.2) [scale=.75] {};
        \node ($y_1'$) [scale=0.9] at (-2.5,-1.7) {$y_1'$};
        \vertex (9) at (2,0) [scale=.75] {};
        \node ($v$) [scale=0.9] at (2.4,-0.6) {$v$};
        \vertex (10) at (2,2) [scale=.75] {};
        \node ($x_1$) [scale=0.9] at (2.8,2) {$x_1$};
        \vertex (11) at (4,0) [scale=.75] {};
        \node ($x_3$) [scale=0.9] at (4.4,-0.6) {$x_3$};
        \vertex (12) at (2,-2) [scale=.75] {};
        \node ($x_2$) [scale=0.9] at (2.8,-2) {$x_2$};

        \path
        (1) edge (2)
        (2) edge (3)
        (3) edge (4)
        (4) edge (5)
        (5) edge (6)
        (6) edge (7)
        (7) edge (8)
        (8) edge (2)
        (1) edge (9)
        (9) edge (10)
        (9) edge (11)
        (9) edge (12)
        ;

        \draw (-2.9,1.32)..controls (-0.4,3) and (1.8,2) ..(1.84,2.1);

        \draw (0.05,0.15)..controls (0.5,2) and (1.8,2.1) ..(1.82,2);

        \draw (-4.45,1.35)..controls (-1.5,4) and (1.8,2.2) ..(1.9,2.12);

        %the arrow 
        \vertex (200) at (6,0) [scale=.04,fill=black] {};
        \vertex (201) at (8,0) [scale=.01] {};
       \draw[thick][->,>=latex](200)--(201);

       \vertex (51) at (16,0) [scale=.75] {};
       \node ($x$) [scale=0.9] at (16.2,-0.6) {$x$};
       \vertex (52) at (14,0) [scale=.75] {};
       \node ($y$) [scale=0.9] at (14.2,-0.6) {$y$};
       \vertex (53) at (13,1.2) [scale=.75] {};
       \node ($y_1$) [scale=0.9] at (12.8,1.67) {$y_1$};
       \vertex (54) at (11.5,1.2) [scale=.75] {};
       \node ($y_2$) [scale=0.9] at (10.8,1.4) {$y_2$};
       \vertex (55) at (10.4,0.5) [scale=.75] {};
       \node ($y_3$) [scale=0.9] at (9.6,0.5) {$y_3$};
       \vertex (56) at (10.4,-0.5) [scale=.75] {};
       \node ($y_3'$) [scale=0.9] at (9.6,-0.5) {$y_3'$};
       \vertex (57) at (11.5,-1.2) [scale=.75] {};
       \node ($y_2'$) [scale=0.9] at (10.9,-1.7) {$y_2'$};
       \vertex (58) at (13,-1.2) [scale=.75] {};
       \node ($y_1'$) [scale=0.9] at (13.5,-1.7) {$y_1'$};
       \vertex (59) at (18,0) [scale=.75] {};
       \node ($v$) [scale=0.9] at (18.4,-0.6) {$v$};
       \vertex (60) at (18,2) [scale=.75] {};
       \node ($x_1$) [scale=0.9] at (18.8,2) {$x_1$};
       \vertex (61) at (20,0) [scale=.75] {};
       \node ($x_3$) [scale=0.9] at (20.4,-0.6) {$x_3$};
       \vertex (62) at (18,-2) [scale=.75] {};
       \node ($x_2$) [scale=0.9] at (18.8,-2) {$x_2$};
       
       \path
       (51) edge (52)
       (52) edge (53)
       (53) edge (54)
       (54) edge (55)
       (55) edge (56)
       (56) edge (57)
       (57) edge (58)
       (58) edge (52)
       (51) edge (59)
       (59) edge (60)
       (59) edge (61)
       (59) edge (62)
       ;

       \draw (13.1,1.32)..controls (15,1.8) and (16,0.16) ..(15.98,0.16);
       \draw (16.02,0.16)..controls (16.5,2) and (17.8,2.1) ..(17.82,2);
       \draw (11.55,1.35)..controls (14.5,4) and (17.8,2.2) ..(17.9,2.12);
    \end{tikzpicture}
\end{center}
\vspace{1.5mm}
\par {\footnotesize \centerline{{\bf Fig. 10.} ~The subcases that $x_1y_1, x_1x \in E(G)$ and $xy_1, xx_1 \in E(G)$. \hypertarget{Fig10}}}
\end{figure}

\emph{Subcase 3.3.} $H^* \cong C_{11}$. If $N(y_2) \cap \{x_1,x_2,x_3\} = \emptyset$ and $N(y_2') \cap \{x_1,x_2,x_3\} = \emptyset$, then $\{y,y_4,y_4'\}$ is a 1-isolating set of $G[X]$. By Lemmas \ref{Lemma2.2} and \ref{Lemma2.3}, and by the induction hypothesis, $\iota_1(G) \leq |\{y,y_4,y_4'\}| + \iota_1(G-X) \leq 3+\frac{1}{4}(n-12) = \frac{n}{4}$. So, let $N(y_2) \cap \{x_1,x_2,x_3\} \neq \emptyset$ or $N(y_2') \cap \{x_1,x_2,x_3\} \neq \emptyset$. Without loss of generality, we assume that $N(y_2) \cap \{x_1,x_2,x_3\} \neq \emptyset$ and $y_2x_1 \in E(G)$. It is easy to see that $y_2y_1yxvx_1y_2$ is a 6-cycle in $G$. Since $G$ does not contain induced 5- and 6-cycles, we derive that $xy_2 \in E(G)$, or $x_1y \in E(G)$, or $y_1x \in E(G)$ and $y_1x_1 \in E(G)$, or $xy_1 \in E(G)$ and $xx_1 \in E(G)$, or $x_1y_1 \in E(G)$ and $x_1x \in E(G)$.

\emph{Subcase 3.3.1.} $xy_2 \in E(G)$. Assume that $N(y) \cap \{x_2,x_3\} = \emptyset$ and $N(y_4) \cap \{x_2,x_3\} = \emptyset$. It is easy to see that $\{y_2,y_2',y_5'\}$ is a 1-isolating set of $G[X]$ now. By Lemmas \ref{Lemma2.2} and \ref{Lemma2.3}, and by the induction hypothesis, $\iota_1(G) \leq |\{y_2,y_2',y_5'\}|+ \iota_1(G-X) \leq 3+\frac{1}{4}(n-12) = \frac{n}{4}$.

Assume that $N(y) \cap \{x_2,x_3\} \neq \emptyset$ and $yx_2 \in E(G)$. Then, $yx_2vx_1y_2y_1y$ is a 6-cycle in $G$. Since $G$ does not contain induced 5- and 6-cycles, we derive that $y_1x_1 \in E(G)$ and $y_1x_2 \in E(G)$, or $x_2x_1 \in E(G)$ and $x_2y_1 \in E(G)$, or $x_1y_1 \in E(G)$ and $x_1x_2 \in E(G)$. If $y_1x_1 \in E(G)$ and $y_1x_2 \in E(G)$, then $xy_2y_1x_2vx$ is a 5-cycle in $G$, and then $xx_2 \in E(G)$. However, $y_1yxvx_1y_1$ is an induced 5-cycle in $G$. If $x_2x_1 \in E(G)$ and $x_2y_1 \in E(G)$, then $xy_2y_1x_2vx$ is a 5-cycle in $G$, and then $xy_1 \in E(G)$. However, $xyx_2x_1y_2x$ is an induced 5-cycle in $G$. So, we assume that $x_1y_1 \in E(G)$ and $x_1x_2 \in E(G)$. Since  $x_1x_2yxy_2x_1$ is a 5-cycle in $G$, $x_2x \in E(G)$. However, $y_1yxvx_1y_1$ is an induced 5-cycle in $G$, always obtaining a contradiction. This suggests that $N(y) \cap \{x_2,x_3\} = \emptyset$ in the following.

%Take 
%\begin{displaymath}
    %D = \left\{
    %\begin{array}{ll}
    %\{y,y_3,y_5'\},& \text{if}\ y_3x_3 \notin E(G),\\
    %\{y,y_3,y_3'\},& \text{if}\ %y_3x_3 \in E(G).\\
    %\end{array} \right.
%\end{displaymath}
%Assume that $y_2'x_3 \in E(G)$. Then $y_2'x_3vxyy_1'y_2'$ is an induced 6-cycle in $G$ if $x_3y_1' \notin E(G)$, and $yy_1'x_3vxy$ is an induced 5-cycle in $G$ if $x_3y_1' \in E(G)$, a contradiction. So $y_2'x_3 \notin E(G)$. Hence, $D$ is a 1-isolating set of $G[X]$. By the induction hypothesis, $\iota_1(G) \leq |D|+ \iota_1(G-X) \leq 3+\frac{1}{4}(n-12) = \frac{n}{4}$. 

Assume that $N(y_4) \cap \{x_2,x_3\} \neq \emptyset$ and $y_4x_2 \in E(G)$. Since $y_4y_3y_2xvx_2y_4$ can not be an induced 6-cycle in $G$, we derive that $xy_4 \in E(G)$, or $x_2y_3 \in E(G)$ and $x_2x \in E(G)$, or $y_3x_2\in E(G)$ and $y_3x \in E(G)$. For any subcase, $d(x)=\Delta=4$. If $x_2y_3,x_2x \in E(G)$, then $d(x_2)=\Delta=4$, and $y_1'x_2 \notin E(G)$. Suppose that $y_1'x_3 \in E(G)$. Since $N(y) \cap \{x_2,x_3\} = \emptyset$, $yy_1'x_3vxy$ is an induced 5-cycle in $G$, a contradiction. If $xy_4 \in E(G)$ or $y_3x_2,y_3x \in E(G)$, then since $N(y) \cap \{x_2,x_3\} = \emptyset$, $x_iy_1'\in E(G)$ for each $i \in \{2,3\}$; otherwise, $yy_1'x_ivxy$ is an induced 5-cycle in $G$. So, $N(y_1') \cap \{x_2,x_3\} = \emptyset$. Clearly, now $\{y_2,y_5,y_3'\}$ is a 1-isolating set of $G[X]$. It does not matter whether $y,y_1' \in N(x_1)$ or not. By Lemmas \ref{Lemma2.2} and \ref{Lemma2.3}, and by the induction hypothesis, $\iota_1(G) \leq |\{y_2,y_5,y_3'\}|+ \iota_1(G-X) \leq 3+\frac{1}{4}(n-12) = \frac{n}{4}$.

%In each of these three subcases, we have $d(x)=4$. Note that $N(y) \cap \{x_2,x_3\} = \emptyset$. Then $yy_1'x_2vxy$ is an induced 5-cycle in $G$ if $y_1'x_2 \in E(G)$, and $yy_1'x_3vxy$ is an induced 5-cycle in $G$ if $y_1'x_3 \in E(G)$, a contradiction. So, $y_1'x_2 \notin E(G)$ and $y_1'x_3 \notin E(G)$. It follows that $\{y_2,y_5,y_3'\}$ is a 1-isolating set of $G[X]$. By the induction hypothesis, $\iota_1(G) \leq |\{y_2,y_5,y_3'\}|+ \iota_1(G-X) \leq 3+\frac{1}{4}(n-12) = \frac{n}{4}$.

%If $x_2y_3 \in E(G)$ and $x_2x \in E(G)$, then $d(x_2)=4$. Since $N(y) \cap \{x_2,x_3\} = \emptyset$, $y_1'x_3 \notin E(G)$. Otherwise, $yy_1'x_3vxy$ is an induced 5-cycle in $G$, a contradiction. Hence, $D=\{y_2,y_5,y_3'\}$ is a 1-isolating set of $G[X]$. If $y_3x_2\in E(G)$ and $y_3x \in E(G)$, then $y_2y_3x_2vx_1y_2$ is a 5-cycle in $G$, implying that $x_1x_2 \in E(G)$. Since $N(y) \cap \{x_2,x_3\} = \emptyset$, $y_1'x_3 \notin E(G)$. Hence, $D=\{y_2,y_5,y_3'\}$ is a 1-isolating set of $G[X]$. If $xy_4 \in E(G)$, then $d(x)=\Delta=4$. Since $N(y) \cap \{x_2,x_3\} = \emptyset$, $y_1'x_3 \notin E(G)$. Note that if $y_1'x_2 \in E(G)$, then $yy_1'x_2vxy$ is a 5-cycle in $G$, and then $x_2y \in E(G)$. However, $y_4y_3y_2x_1vx_2y_4$ is an induced 6-cycle in $G$ if $y_3x_1 \notin E(G)$, and $y_4y_3x_1vx_2y_4$ is an induced 5-cycle in $G$ if $y_3x_1 \in E(G)$, a contradiction. Hence, $N(y_1') \cap \{x_2,x_3\} = \emptyset$, and $D=\{y_2,y_5,y_3'\}$ is a 1-isolating set of $G[X]$. By the induction hypothesis, $\iota_1(G) \leq |D|+ \iota_1(G-X) \leq 3+\frac{1}{4}(n-12) = \frac{n}{4}$.

\emph{Subcase 3.3.2.} $x_1y \in E(G)$. We relabel the vertices of $G$ as follows: $y_2=y, y=y_2, y_3=y_1', y_4=y_2', y_5=y_3', y_5'=y_4', y_4'=y_5', y_3'=y_5, y_2'=y_4, y_1'=y_3, x=x_1$ and $x_1=x$. Thus, this subcase can be come down to Subcase 3.3.1.

\emph{Subcase 3.3.3.} $y_1x \in E(G)$ and $y_1x_1 \in E(G)$. We further consider the edges between $\{y_1',y_2'\}$ and $\{x_2,x_3\}$.

Assume that $N(y_1') \cap \{x_2,x_3\} = \emptyset$ and $N(y_2') \cap \{x_2,x_3\} = \emptyset$. Clearly, $\{y_1,y_4,y_4'\}$ is a 1-isolating set of $G[X]$. By Lemmas \ref{Lemma2.2} and \ref{Lemma2.3}, and by the induction hypothesis, $\iota_1(G) \leq |\{y_1,y_4,y_4'\}|+ \iota_1(G-X) \leq 3+\frac{1}{4}(n-12)=\frac{n}{4}$. 

Assume that $N(y_1') \cap \{x_2,x_3\} \neq \emptyset$  and $y_1'x_2 \in E(G)$. Since $yy_1'x_2vxy$ is a 5-cycle in $G$, we derive that $xy_1' \in E(G)$, or $xx_2 \in E(G)$, or $x_2y \in E(G)$. If $xy_1' \in E(G)$, then $y_1x_1vx_2y_1'yy_1$ is a 6-cycle in $G$. Since $d(y_1)=d(y_1')=d(v)=\Delta=4$, $x_1x_2 \in E(G)$ and $yx_2 \in E(G)$. However, $y_1x_1x_2y_1'xy_1$ is an induced 5-cycle in $G$. If $xx_2 \in E(G)$, then $y_1yy_1'x_2vx_1$ is a 6-cycle in $G$, implying that $x_1y_1' \in E(G)$. However, $yy_1'x_1vxy$ is an induced 5-cycle in $G$. If $x_2y \in E(G)$, then $y_1yx_2vx_1y_1$ is a 5-cycle in $G$. Since $d(y_1)=d(y)=d(v)=\Delta=4$, $x_1x_2 \in E(G)$. Take 
\begin{displaymath}
    D = \left\{
    \begin{array}{ll}
    \{y_1,y_2',y_5'\},& \text{if}\ y_3x_3 \notin E(G) \ \text{and}\ y_4x_3 \notin E(G),\\
    \{y,y_3,y_3'\},& \text{if}\ y_3x_3 \in E(G),\\
    \{y_1,y_2',y_4\},& \text{if}\ y_4x_3 \in E(G).\\
    \end{array} \right.
\end{displaymath}
Clearly, $D$ is a 1-isolating set of $G[X]$. By Lemmas \ref{Lemma2.2} and \ref{Lemma2.3}, and by the induction hypothesis, $\iota_1(G) \leq |D|+ \iota_1(G-X) \leq 3+\frac{1}{4}(n-12) = \frac{n}{4}$. 

Assume that $N(y_2') \cap \{x_2,x_3\} \neq \emptyset$ and $y_2'x_2 \in E(G)$. If $y_3x_3 \in E(G)$, then $y_3y_2y_1xvx_3y_3$ is a 6-cycle in $G$. By the non-existence of induced 5- and 6-cycles in $G$, we derive that $y_3x \in E(G)$, or $x_3y_2 \in E(G)$ and $x_3x \in E(G)$. If $y_3x \in E(G)$, then $y_2y_3xvx_1y_2$ is an induced 5-cycle in $G$, a contradiction. If $x_3y_2 \in E(G)$ and $x_3x \in E(G)$, then $d(x_3)=\Delta=4$. It is easy to see that, $D=\{y_1,y_4,y_2'\}$ is a 1-isolating set of $G[X]$, and $x_1,x_2 \in N(D)$. By Lemma \ref{Lemma2.2} and the induction hypothesis, $\iota_1(G) \leq |\{y_1,y_4,y_2'\}|+ \iota_1(G-X) \leq 3+\frac{1}{4}(n-12) = \frac{n}{4}$.

If $y_3x_3 \notin E(G)$, then we take
\begin{displaymath}
    D = \left\{
    \begin{array}{ll}
    \{y_1,y_2',y_5'\},& \text{if}\ y_4x_3 \notin E(G),\\
    \{y_1,y_2',y_4\},& \text{if}\ y_4x_3 \in E(G).\\
    \end{array} \right.
\end{displaymath}
Clearly, $D$ is a 1-isolating set of $G[X]$. By Lemmas \ref{Lemma2.2} and \ref{Lemma2.3}, and by the induction hypothesis, $\iota_1(G) \leq |D|+ \iota_1(G-X) \leq 3+\frac{1}{4}(n-12) = \frac{n}{4}$.

\emph{Subcase 3.3.4.} $xy_1 \in E(G)$ and $xx_1 \in E(G)$. Clearly, $y_2y_1yy_1'x_1y_2$ is an induced 5-cycle in $G$ if $y_1'x_1 \in E(G)$, and $yy_1'y_2'x_1xy$ is an induced 5-cycle in $G$ if $y_2'x_1 \in E(G)$. So, it follows that $y_1'x_1,y_2'x_1 \notin E(G)$. We further consider the edges between $\{y_1',y_2'\}$ and $\{x_2,x_3\}$.

Assume that $N(y_1') \cap \{x_2,x_3\} = \emptyset$ and $N(y_2') \cap \{x_2,x_3\} = \emptyset$. Clearly, $\{y_1,y_4,y_4'\}$ is a 1-isolating set of $G[X]$. By Lemma \ref{Lemma2.2} and the induction hypothesis, $\iota_1(G) \leq |\{y_1,y_4,y_4'\}|+ \iota_1(G-X) \leq 3+\frac{1}{4}(n-12)=\frac{n}{4}$.

Assume that $N(y_1') \cap \{x_2,x_3\} \neq \emptyset$ and $y_1'x_2 \in E(G)$. Since $yy_1'x_2vxy$ is a 5-cycle in $G$, and $d(x)=d(v)=\Delta=4$, $x_2y \in E(G)$. Furthermore, $y_2y_1yx_2vx_1y_2$ is a 6-cycle in $G$. By the non-existence of induced 5- and 6-cycles in $G$, $x_2y_2 \in E(G)$. However, $y_2y_1xvx_2y_2$ is an induced 5-cycle in $G$, a contradiction.

Assume that $N(y_2') \cap \{x_2,x_3\} \neq \emptyset$ and $y_2'x_2 \in E(G)$. Since $yy_1'y_2'x_2vxy$ is a 6-cycle in $G$, $x_2y \in E(G)$. Furthermore, $y_2y_1yx_2vx_1y_2$ is a 6-cycle in $G$, and thus, $x_2y_2 \in E(G)$. However, $y_2y_1xvx_2y_2$ is an induced 5-cycle in $G$, a contradiction.

\emph{Subcase 3.3.5.} $x_1y_1 \in E(G)$ and $x_1x \in E(G)$. We relabel the vertices of $G$ as follows: $y_2=y, y=y_2, y_3=y_1', y_4=y_2', y_5=y_3', y_5'=y_4', y_4'=y_5', y_3'=y_5, y_2'=y_4, y_1'=y_3, x=x_1$ and $x_1=x$. Thus, this subcase comes down to Subcase 3.3.4.

\vspace{3mm}
This completes the proof of Theorem \ref{Theorem1.7}.
\end{proof}

\section{\large Conclusions}

In this paper, we study the 1-isolation number of graphs without short cycles, and establish two sharp upper bounds on $\iota_1(G)$. More specifically, we prove that if $G \notin \{P_3,C_3,C_7,C_{11}\}$ is a connected graph of order $n$ without $6$-cycles, or without induced 5- and 6-cycles, then $\iota_1(G) \leq \frac{n}{4}$. This in fact extends a result of Caro and Hansberg \cite{Caro2017} that if $T$ is a tree of order $n$ and different from $P_3$, then $\iota_1(T) \leq \frac{n}{4}$, and a result of Zhang and Wu \cite{Zhang2021} that if $G \notin \{P_3,C_7,C_{11}\}$ is a connected graph of order $n$ with girth at least 7, then $\iota_1(G) \leq \frac{n}{4}$.

\vspace{3mm}
A more interesting and profound problem is proposed as follows.

\begin{problem}\label{Problem5.1}
Let $G$ be a connected graph of order $n$ without induced 6-cycles. Determine the exact value of $\lim \sup_{n \to \infty} \frac{\iota_1(G)}{n}$. Is it $\frac{1}{4}$?
\end{problem}

%We believe that its answer is affirmative.

\vspace{3mm}
\noindent{\large\bf Acknowledgments}
\vspace{3mm}

This work is supported by National Natural Science Foundation of China (No. 12171402).

%The authors are grateful to the anonymous reviewer(s) for their careful reading and helpful comments.

%\newpage

\end{document}